\theoremstyle{definition}
\theoremstyle{remark}
\numberwithin{equation}{section}
\theoremstyle{definition}
\newtheorem{thm}{Theorem}[section]
\newtheorem{lem}[thm]{Lemma}
\theoremstyle{definition}
\newtheorem{defn}[thm]{Definition}
\theoremstyle{remark}
\newtheorem{rem}[thm]{\bf Remark}
\theoremstyle{remark}
\numberwithin{equation}{section}
\newtheorem{exam}[thm]{\bf Example}
\newcommand{\A}{\mathcal{A}}
\newcommand{\f}{\mathcal{F}}
\newcommand{\K}{\mathcal{K}}
\newcommand{\D}{\mathcal{D}}
\newcommand{\LL}{\mathcal{L}}
\newcommand{\C}{\mathcal{C}}
\newcommand{\B}{\mathcal{B}}
\newcommand{\p}{\widetilde{p}}
\newcommand{\q}{\widetilde{q}}
\newcommand{\rr}{\widetilde{r}}
\newcommand{\LM}{Lmc(S)}
\begin{document}

\title[Filters and the weakly almost periodic compactification]{Filters and the weakly almost periodic compactification of a semitopological semigroup}


\author[M. A. Tootkaboni]{M. Akbari Tootkaboni\\
Department of Mathematics, Faculty of Basic Science,\\
 Shahed University of Tehran,\\
  Tehran-Iran.}
\address{}
\curraddr{}
\email{tootkaboni.akbari@gmail.com\\
akbari@shahed.ac.ir}
\thanks{}

\subjclass[2010]{Primary: 54D80, 22A15, Secondary: 22A20 }
\keywords{Semigroup Compactification, $Lmc$-Compactification, $wap$-compactification, $z$-filter}
\date{}

\dedicatory{}

\begin{abstract}
Let $S$ be a semitopological semigroup. The $wap-$ compactification of semigroup S, is a compact
semitopological semigroup with certain universal properties relative to the original semigroup, and
the $Lmc-$ compactification of semigroup $S$ is a universal semigroup compactification of $S$,
which are denoted by $S^{wap}$ and $S^{Lmc}$ respectively. In this paper, an internal construction of the
$wap-$compactification of a semitopological semigroup is constructed as a space of $z-$filters. Also we
obtain the cardinality of $S^{wap}$ and show that if $S^{wap}$ is the one point compactification then
$(S^{Lmc}-S)*S^{Lmc}$ is dense in $S^{Lmc}-S$.
\end{abstract}

\maketitle
\section{\bf Introduction}

A semigroup $S$ which is
also a  Hausdorff topological space is called a semitopological semigroup, if  for each $s\in S$, $\lambda_{s} :S\rightarrow S$ and $r_{s} :S\rightarrow S$ are
continuous, where for each $x\in S$, $\lambda_{s}(x)=sx$ and
$r_{s}(x)=xs$. Notice that if  only  $r_s$, for each $s\in S$, is continuous, $ S$ is called a right topological semigroup.
Throughout this paper $S$  is a \mbox{semitopological} semigroup. Generally, the Stone-$\check{C}$ech compactification $\beta S$ of
space $S$ is defined in two approaches:\\
i)	as the spectrum of $\C\B(S)$, the $C^*-$algebra of
bounded complex valued continuous functions on $S$,(see \cite{Analyson}), or\\
ii)	the collection of all $z$-ultrafilters on $S$, (see \cite{Gil}).

Generally $\beta S$ is not a semigroup. A necessary and sufficient condition for $\beta S$
 to be a semigroup naturally is that $\C\B(S)$ should be an $m-$admissible algebra.
If $S$ is realized as a discrete semigroup, then $\C\B(S)$ will be an $m-$admissible
 algebra and as a result, $\beta S$ will be a semigroup.
 This semigroup, as the collection of all ultrafilters on $S$, has a known operation
 attributed to Glazer. Capability and competence of ultrafilter approach are illustrated
 clearly in \cite{Ultra}, \cite{Gil}, \cite{hindbook} and \cite{zelbook}.
 It is well known that ultrafilters play a prominent role in the Stone-$\check{C}$ech
 compactification of discrete semigroups, see \cite{hindfilter}, \cite{Gil} and \cite{hindbook}. It is natural
 to study the semigroup compactification of a semitopological semigroup as a
collection of $z$-filters, see \cite{Akbari} for more details. This approach sheds a new light on studying the this
kind of compactifications. By getting help from what has been
done  in \cite{Akbari}, has been tried to introduce some new subjects about semigroup compactification
through $z$-filters, that was not applicable without this objects until now. See \cite{Akbari4},\cite{Akbari3} and \cite{Akbari1}.

This paper by getting ideas from what filters do in the Stone-$\check{C}$ech compactification, has been prepared.
Indeed, ideas are taken from \cite{hindfilter} and it has been used in \cite{Akbari2}. \\
In Section 2, semigroup compactification has been introduced
briefly and $m$-admissible subalgebras of $Lmc(S)$ and $wap(S)$ have been explained;
then semigroup compactification is rebuilt as a collection of $z$-filters,
and some Theorems and Definitions of \cite{Akbari2} are presented.

In Section 3, regarding to \cite{Akbari2}, the $wap$-compactification as a quotient space of the $Lmc$- compactification is described, and
we obtain the cardinal of the $wap$-compactification under some conditions and present some examples in this area.

\section{\bf Preliminary}

 A semigroup compactification of   $S$ is a pair
$(\psi,X)$, where $X$ is a compact, Hausdorff, right topological
semigroup and $\psi:S\rightarrow X$ is  continuous homomorphism with dense image
such that for all $s\in S$, the mapping
$x\mapsto\psi(s)x:X\rightarrow X$ is continuous. The last property say that $ \psi[S] $ is  in the topological
center of $X$ \cite[Definition 3.1.1]{Analyson}. Let $\mathcal{F}$ be a $C^*$-subalgebra
 of $\mathcal{CB}(S)$ containing the constant functions. Then the set of all multiplicative means of
 $\f$, called the spectrum of $\f$,  denoted by $S^{\f}$, equipped with the
  Gelfand topology, is a compact Hausdorff topological space. Given $\f$ is left translation invariant if $L_sf=f\circ \lambda_{s}\in\f$ for all
$s\in S$ and $f\in\f$. Then $\f$  is called
$m$-admissible too if the function $s\mapsto(T_\mu f(s))=\mu(L_s f)$ is
in $\f$ for all $f\in\f$ and $\mu\in S^{\f}$. If so, $S^{\f}$ under
the multiplication $\mu\nu=\mu\circ T_\nu$  for $\mu,\nu\in
S^{\mathcal{F}}$, equipped with the Gelfand topology, makes
  semigroup compactification $(\varepsilon, S^{\f})$ of $S$, called the
$\f$-compactification, where $\varepsilon :S\rightarrow
S^{\mathcal{F}}$ is the evaluation mapping. Also
$\varepsilon^{*}:\mathcal{C}(S^{\mathcal{F}})\rightarrow\mathcal{F}$
is an isometric isomorphism and
$\widehat{f}=(\varepsilon^{\ast})^{-1}(f)\in
\mathcal{C}(S^{\mathcal{F}})$ for $f\in\f$ is given by
$\widehat{f}(\mu)=\mu(f)$ for all $\mu\in S^{\f}$. For more details
see \cite[Section 2]{Analyson}.\\
 A function $f\in \mathcal{CB}(S)$ is left multiplicative continuous
if and only if $\mathbf{T}_{\mu}f\in \mathcal{CB}(S)$ for all
$\mu\in\beta S=S^{\mathcal{CB}(S)}$, then
\begin{equation*}
Lmc(S)=\bigcap\{\mathbf{T}^{-1}_{\mu}(\mathcal{CB}(S)):\mu\in\beta S\},
\end{equation*}
and $Lmc(S)$ is the largest $m$-admissible subalgebra of
$\mathcal{CB}(S)$. Then $(S^{Lmc}, \varepsilon)$ is the
universal compactification of $S$, see \cite[Definition 4.5.1 and Theorem
4.5.2]{Analyson}.

A function $f\in \mathcal{CB}(S)$ is said to be weakly almost
periodic if $R_{S}f=\{f\circ r_{s}:s\in S\}$ is relatively compact
(i.e., $\sigma (\mathcal{CB}(S), \mathcal{CB}(S)^{*})$) in
$\mathcal{CB}(S)$. The set of all weakly almost periodic functions
on $S$ is denoted by $wap(S)$,( see 4.2.1 in \cite{Analyson}.)

\begin{thm}
Let $S$ be a semitopological semigroup and $f\in \mathcal{CB}(S)$.
The following statements are
equivalent:\\
$(i)$ $f\in wap(S)$.\\
$(ii)$ $lim_{m\rightarrow\infty}lim_{n\rightarrow\infty}f(s_{m}t_{n})=
lim_{n\rightarrow\infty}lim_{m\rightarrow\infty}f(s_{m}t_{n})$
whenever $\{s_{m}\}$ and $\{t_{n}\}$ are sequences in $S$ such that
all the limits exist.
\end{thm}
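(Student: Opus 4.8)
The plan is to prove the two implications separately, the harder and more substantial one being $(i)\Rightarrow(ii)$. For that direction I would start from the definition: $f\in wap(S)$ means $R_S f=\{f\circ r_s:s\in S\}$ is relatively weakly compact in $\mathcal{CB}(S)$, i.e. relatively $\sigma(\mathcal{CB}(S),\mathcal{CB}(S)^*)$-compact. Given sequences $\{s_m\}$ and $\{t_n\}$ for which all the iterated limits in $(ii)$ exist, consider the functions $g_n=f\circ r_{t_n}\in R_S f$ and the functionals $\delta_{s_m}\in\mathcal{CB}(S)^*$ (point evaluations, or rather their weak$^*$-limit points). Then $f(s_m t_n)=g_n(s_m)=\delta_{s_m}(g_n)$. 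The idea is to pass to weakly convergent subnets: by relative weak compactness, a subnet of $\{g_n\}$ converges weakly to some $g\in\mathcal{CB}(S)$; on the other side, $\{\delta_{s_m}\}$, being a bounded sequence in the dual, has a weak$^*$-convergent subnet with limit some $\phi\in\mathcal{CB}(S)^*$. The existence of the individual iterated limits forces these subnets to be redundant — the full sequences' iterated limits already compute $\lim_m \phi(g_n)$ and $\lim_n \delta_{s_m}(g)$ — and the key point is that
\begin{equation*}
\lim_m\lim_n f(s_m t_n)=\lim_m \phi(g_m')\quad\text{and}\quad \lim_n\lim_m f(s_m t_n)=\lim_n \delta_{t_n}'(g),
\end{equation*}
so both equal $\phi(g)$ by interchanging a weakly convergent net against a weak$^*$-convergent net. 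This interchange is exactly Grothendieck's lemma (the iterated-limit criterion for weak compactness), and invoking it — or reproving it in this concrete setting — is the main obstacle; everything else is bookkeeping with subnets and using the hypothesis that the limits exist to upgrade subnet convergence to convergence of the original sequences along the relevant functionals.

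For the converse $(ii)\Rightarrow(i)$, I would argue by contraposition using the Eberlein–Šmulian theorem: if $R_S f$ is not relatively weakly compact, then (since $\mathcal{CB}(S)$ is a Banach space) it is not relatively weakly sequentially compact, so there is a sequence $\{f\circ r_{t_n}\}$ with no weakly convergent subsequence. From such a sequence one extracts, by a standard gliding-hump / Rosenthal-type argument, a subsequence and a sequence of points $\{s_m\}\subseteq S$ witnessing a "long oscillation," i.e. for which the two iterated limits $\lim_m\lim_n f(s_m t_n)$ and $\lim_n\lim_m f(s_m t_n)$ both exist (after further passing to subsequences, using boundedness of $f$ and diagonalization so that all limits exist) but are unequal. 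This produces a violation of $(ii)$.

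A cleaner route to both directions, which I would actually prefer to present, is to phrase everything in the compactification. Since $wap(S)$ is itself an $m$-admissible subalgebra, one has the $wap$-compactification $(\varepsilon, S^{wap})$, and $f\in wap(S)$ iff $\widehat f$ extends continuously to $S^{wap}$ with the multiplication on $S^{wap}$ being separately continuous; the double-limit condition is then precisely the statement that one may interchange the order of taking limits $\varepsilon(s_m)\to\mu$ and $\varepsilon(t_n)\to\nu$ in $\widehat f(\mu\nu)$, which holds exactly when multiplication is separately continuous on the relevant compact set. I would therefore reduce $(i)\Leftrightarrow(ii)$ to the known equivalence between relative weak compactness of $R_S f$ and separate continuity of the induced map on the weak$^*$-closure of $\{\varepsilon(s):s\in S\}$, citing the Grothendieck double-limit theorem as the analytic engine. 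The principal difficulty in any approach is the same: it is the Grothendieck interchange-of-limits lemma, and the write-up must either quote it in the form "a bounded subset $A$ of a Banach space is relatively weakly compact iff for all sequences $\{a_k\}\subseteq A$ and $\{\phi_j\}$ in the dual ball the two iterated limits agree whenever both exist," or give the short Hahn–Banach-plus-Eberlein–Šmulian proof of it in situ.
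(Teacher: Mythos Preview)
The paper does not prove this statement at all: its entire proof is the single line ``See Theorem 4.2.3 in \cite{Analyson}'' (Berglund--Junghenn--Milnes). Your sketch is correct and is exactly the Grothendieck double-limit argument that the cited reference carries out, so there is nothing substantive to compare; you have simply supplied the proof that the paper outsources.
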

\begin{proof}
 See Theorem 4.2.3 in \cite{Analyson}.
\end{proof}
\begin{thm}
Let $S$ be a semitoplogical semigroup. There is a compact
semigroup $S^{wap}$ and a continuous homomorphism
$\varepsilon:S\rightarrow S^{wap}$ such that\\
i) $S^{wap}$ is a semitoplogical semigroup,\\
ii) $\varepsilon(S)$ is dense in $S^{wap}$, and\\
iii) the pair $(S^{wap},\varepsilon)$ is maximal with respect to
these properties in the sense that $\phi$ is a continuous
homomorphism from $S$ to a compact semigroup $T$, and $(T,\phi)$
satisfies (i) and (ii) with $\phi$ replacing $\varepsilon$ and $T$
replacing $S^{wap}$, then there is a continuous homomorphism
$\eta$ from $S^{wap}$ onto $T$ such that $\eta \circ
\varepsilon=\phi$. Moreover, a function $f\in \mathcal{CB}(S)$ extends to $S^{wap}$ if and
only if $f$ is weak almost periodic.
\end{thm}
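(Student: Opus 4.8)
The plan is to realize $S^{wap}$ as the spectrum $S^{wap(S)}$ of the algebra $wap(S)$, in exactly the way recalled in Section~2 with $\f=wap(S)$. The first step is to check that $wap(S)$ is an $m$-admissible $C^{*}$-subalgebra of $\mathcal{CB}(S)$ containing the constants. That $wap(S)$ is a norm closed, conjugate closed, unital subalgebra of $\mathcal{CB}(S)$, invariant under left and right translations, is classical, and I would take it as known; each of these closure properties can be checked from the double-limit criterion of Theorem~2.1. The one genuinely new point is $m$-admissibility, that is, $T_{\mu}f\in wap(S)$ whenever $f\in wap(S)$ and $\mu\in S^{wap(S)}$; I would obtain this by writing $T_{\mu}f$ as a pointwise limit of convex combinations of right translates $f\circ r_{s}$ of $f$ and feeding the result into Theorem~2.1, using the relative weak compactness of $R_{S}f$ to ensure that the iterated limits involved exist. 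Granting this, the construction recalled in Section~2 hands us a compact right topological semigroup $S^{wap}:=S^{wap(S)}$ under $\mu\nu=\mu\circ T_{\nu}$, a continuous homomorphism $\varepsilon:S\to S^{wap}$ with dense image (this is property (ii)), and an isometric isomorphism $\varepsilon^{*}:\mathcal{C}(S^{wap})\to wap(S)$. This isomorphism is exactly the ``Moreover'' assertion: $f\in\mathcal{CB}(S)$ extends continuously over $S^{wap}$ iff it lies in the range of $\varepsilon^{*}$, i.e.\ iff $f\in wap(S)$.

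The heart of the proof, and the step I expect to be the main obstacle, is property (i): separate continuity of the multiplication of $S^{wap}$. Right topologicality, the continuity of $\mu\mapsto\mu\nu$ for fixed $\nu$, is automatic because $\widehat{f}(\mu\nu)=\mu(T_{\nu}f)$ and $T_{\nu}f\in wap(S)$. For the other variable one must prove that $\nu\mapsto\mu(T_{\nu}f)$ is continuous for each fixed $\mu\in S^{wap}$ and each $f\in wap(S)$. I would establish this by a Grothendieck-type double-limit argument: take a net $\nu_{\alpha}\to\nu$, pass to subnets, approximate $\mu$ and the $\nu_{\alpha}$ by evaluations $\varepsilon(s_{m}),\varepsilon(t_{n})$ drawn from the dense copy of $S$, and invoke Theorem~2.1 to exchange the two orders of iteration of $f(s_{m}t_{n})$; the relative weak compactness of the translates of $f$ is precisely what allows passage to subnets along which every needed limit exists. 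The delicate point is bookkeeping: keeping track of which limit is taken in which topology and reducing the several-net situation down to the two-sequence statement of Theorem~2.1.

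For the universal property (iii), let $(T,\phi)$ be a compact semitopological semigroup with $\phi:S\to T$ a continuous homomorphism of dense image. I would first note that $\mathcal{C}(T)=wap(T)$: on a compact semitopological semigroup every continuous function meets the iterated-limit identity of Theorem~2.1, since after passing to convergent subnets both orders of iteration return the common value $F(st)$ by separate continuity of multiplication on $T$. Composition with the homomorphism $\phi$ preserves this identity, so $\phi^{*}(\mathcal{C}(T))\subseteq wap(S)$. As $\phi^{*}$ is a unital $*$-homomorphism between the commutative $C^{*}$-algebras $\mathcal{C}(T)$ and $wap(S)\cong\mathcal{C}(S^{wap})$, dualizing (functoriality of the spectrum) yields a continuous map $\eta:S^{wap}\to T$, and evaluating on functions gives $\eta\circ\varepsilon=\phi$. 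It remains to see that $\eta$ is a homomorphism: the identity $\eta(\mu\nu)=\eta(\mu)\eta(\nu)$ holds when $\mu,\nu\in\varepsilon(S)$ because $\phi$ is a homomorphism, and it extends to all of $S^{wap}\times S^{wap}$ by the separate continuity now available on both $S^{wap}$ and $T$ (fix a point of $\varepsilon(S)$ in one slot, vary the other, use density, then repeat in the first slot). Finally $\eta$ is onto, since $\eta(S^{wap})$ is compact, hence closed in the Hausdorff space $T$, and contains the dense set $\phi(S)$. Assembling these items proves the theorem.
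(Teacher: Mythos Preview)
The paper does not give a proof of this theorem at all; it simply refers the reader to Theorem~2.5 of \cite{hindfilter}. Your proposal therefore supplies far more than the paper does, and it follows the standard route one finds in \cite{Analyson} and \cite{hindfilter}: realize $S^{wap}$ as the spectrum of the $m$-admissible algebra $wap(S)$, establish separate continuity of the product via the Grothendieck double-limit criterion, and deduce the universal property by dualizing the inclusion $\phi^{*}(\mathcal{C}(T))\subseteq wap(S)$. The overall architecture is correct.

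Two small remarks on execution. First, for $m$-admissibility you do not need convex combinations: if $\varepsilon(t_{\alpha})\to\mu$ in $S^{wap(S)}$ then $T_{\mu}f$ is the pointwise limit of the right translates $R_{t_{\alpha}}f$, and one checks the double-limit identity of Theorem~2.1 for $T_{\mu}f$ directly from that of $f$. Second, the net-to-sequence reduction you flag in the left-continuity step can be bypassed entirely. Identify $\mathcal{CB}(S)$ with $\mathcal{C}(\beta S)$ and invoke Grothendieck's theorem that on a weakly compact bounded subset of $\mathcal{C}(K)$ the weak topology and the topology of pointwise convergence coincide. The map $\nu\mapsto T_{\nu}f$ is continuous from $S^{wap}$ into $(\mathcal{CB}(S),\text{pointwise})$ because $T_{\nu}f(s)=\nu(L_{s}f)$ with $L_{s}f\in wap(S)$; its image lies in the pointwise closure of $R_{S}f$, which equals the weak closure and is weakly compact. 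Hence $\nu\mapsto T_{\nu}f$ is weakly continuous, and so $\nu\mapsto\mu(T_{\nu}f)=\widehat{f}(\mu\nu)$ is continuous for every $\mu$. This gives (i) without any sequential bookkeeping.
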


\begin{proof}
 See Theorem 2.5 in \cite{hindfilter}.
\end{proof}
\begin{lem}
$(i)$ Let $S$ be a semitopological semigroup, and let $f,g\in
wap(S)$ be such that $Range(f)\subseteq \mathbb{R}$ and
$Range(g)\subseteq \mathbb{R}$. Define $h$ by
$h(s)=max\{f(s),g(s)\}$ for each $s\in S$. Then $h\in wap(S)$.\\
$(ii)$ If $S$ is a compact semitopological semigroup, then
$wap(S)=\mathcal{CB}(S)$.
\end{lem}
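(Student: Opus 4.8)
The plan is to handle the two parts separately, both by reducing to the double-limit criterion of Theorem 2.1. For part $(i)$, I would start by fixing sequences $\{s_m\}$ and $\{t_n\}$ in $S$ such that all four iterated limits of $h(s_m t_n)=\max\{f(s_m t_n), g(s_m t_n)\}$ exist, and I must show the two orders of iteration agree. Since $f,g\in wap(S)\subseteq\mathcal{CB}(S)$ are bounded, by passing to subsequences (a diagonal argument applied to the countably many relevant sequences) I may assume that all the iterated limits $\lim_m\lim_n f(s_m t_n)$, $\lim_n\lim_m f(s_m t_n)$, and the analogous ones for $g$, as well as for $h$, all exist simultaneously; passing to a subsequence does not change the value of a limit that already existed, so it suffices to prove equality after this reduction. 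Now on each subsequence, $\max$ commutes with limits in the sense that $\lim \max\{a_k,b_k\}=\max\{\lim a_k,\lim b_k\}$ for convergent real sequences, so both iterated limits of $h$ equal $\max$ of the corresponding iterated limits of $f$ and of $g$; by Theorem 2.1 applied to $f$ and to $g$ these inner/outer limit values match, hence so do those for $h$. The one point requiring a little care is the subsequence bookkeeping: one wants a single subsequence pair making \emph{all} the finitely many relevant double limits exist, which is where the main (mild) obstacle lies, but it is a routine diagonal extraction since everything in sight is uniformly bounded.

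For part $(ii)$, suppose $S$ is a compact semitopological semigroup and let $f\in\mathcal{CB}(S)=\mathcal{C}(S)$. I would verify the double-limit condition directly. Given sequences $\{s_m\}$, $\{t_n\}$ in $S$ with all iterated limits of $f(s_m t_n)$ existing, use compactness of $S$ to pass to subnets (or subsequences, using a metric substitute where available, but subnets are safest) so that $s_m\to s$ and $t_n\to t$ in $S$; passing to subnets preserves the values of already-existing limits. Then joint continuity is not available, but separate continuity is: by continuity of $\lambda_{s_m}$ and of multiplication in the second variable, $\lim_n f(s_m t_n)=f(s_m t)$ for each fixed $m$, and then $\lim_m f(s_m t)=f(s t)$ by continuity of $r_t$; symmetrically the other iterated limit is also $f(st)$. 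Hence the two orders agree and $f\in wap(S)$ by Theorem 2.1, giving $wap(S)=\mathcal{CB}(S)$.

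The genuinely delicate step is in $(ii)$: one must be careful that ``all the iterated limits exist'' in Theorem 2.1 refers to sequences, whereas compactness gives convergent \emph{subnets}, not subsequences, and a subnet of a sequence is not a sequence. I would address this either by invoking the standard fact that Theorem 2.1's characterization can be phrased with nets without loss (the Grothendieck double-limit criterion is net-based), or by noting that since $\{s_m t_n : m,n\}$ lies in the compact set $S$, one can extract, via a diagonal argument, subsequences along which $s_m\to s$ and $t_n\to t$ when $S$ is first countable, and handle the general case by the net formulation. In a fully detailed writeup I would simply state that $wap(S)$ is characterized by the double-limit property for nets and run the separate-continuity computation above with nets throughout, which is the cleanest route and avoids any metrizability assumption on $S$.
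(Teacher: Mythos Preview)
The paper does not actually prove this lemma; it simply cites Lemma 2.8 of Berglund--Hindman \cite{hindfilter}. Your proposal supplies a direct argument via the double-limit criterion (Theorem 2.1), and it is correct in both parts.

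For part $(i)$ your subsequence bookkeeping is exactly right: a diagonal extraction in $n$ makes $\lim_n f(s_m t_n)$ and $\lim_n g(s_m t_n)$ exist for every $m$, a second diagonal extraction in $m$ handles the other inner limits, and two further thinnings make the four outer limits exist; none of this disturbs the already-existing iterated limits of $h$, since a subsequence of a convergent sequence has the same limit. Continuity of $\max$ on $\mathbb{R}^2$ then gives $\lim_m\lim_n h=\max\{\lim_m\lim_n f,\lim_m\lim_n g\}=\max\{\lim_n\lim_m f,\lim_n\lim_m g\}=\lim_n\lim_m h$.

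For part $(ii)$ your worry about nets versus sequences is slightly overcautious; you do not need to reformulate Theorem 2.1 for nets. Once the iterated sequential limits $L_1=\lim_m\lim_n f(s_m t_n)$ and $L_2=\lim_n\lim_m f(s_m t_n)$ are assumed to exist, use compactness to choose subnets $s_{m_\alpha}\to s$ and $t_{n_\beta}\to t$. For fixed $m$, separate continuity gives $f(s_m t_{n_\beta})\to f(s_m t)$, and since this is a subnet of the convergent sequence $(f(s_m t_n))_n$ we conclude $\lim_n f(s_m t_n)=f(s_m t)$. Then $(f(s_{m_\alpha} t))_\alpha\to f(st)$ is a subnet of the convergent sequence $(f(s_m t))_m$, forcing $L_1=f(st)$; symmetrically $L_2=f(st)$. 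No metrizability hypothesis and no net version of Theorem 2.1 are required.
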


\begin{proof}
 See Lemma 2.8 in \cite{hindfilter}.
\end{proof}
Now we quote some prerequisite material from \cite{Akbari} for the
description of $(S^{Lmc},\varepsilon)$ in terms of $z-$filters and some Definitions and Theorems
of \cite{Akbari2} is explained that we need. For
$f\in Lmc(S)$, $Z(f)=f^{-1}(\{0\})$ is called zero set for all $f\in Lmc(S)$
and we denote the collection of all zero sets by $Z(Lmc(S))$. For an
extensive account of ultrafilters, the readers may refer to
\cite{Ultra},
 \cite{Gil} and \cite{hindbook}.

\begin{defn}
$\mathcal{A}\subseteq Z(Lmc(S))$ is called a $z-$filter in $Lmc(S)$
if\\
(i) $\emptyset\notin \mathcal{A}$ and $S\in\A,$\\
(ii) if $A,B\in \mathcal{A}$, then $A\bigcap B\in\mathcal{A},$ and\\
(iii) if $A\in \mathcal{A}$, $B\in Z(Lmc(S))$ and $A\subseteq
B$ then $B\in \mathcal{A}$.
\end{defn}
A $z-$filter is a $z-$ultrafilter if it is not contained
properly in any other $z-$filters. The collection of all ultrafilters in $Lmc(S)$ is denoted
by $zu(S)$. If $p,q\in zu(S)$, then the following statements hold.\\
1) If $B\in Z(Lmc(S))$ and for all $A\in p$, $A\cap B\neq\emptyset$ then
$B\in p$,\\
2) if $A, B\in Z(Lmc(S))$ such that $A\cup B\in p$ then $A\in p$ or
$B\in p$, and\\
3) let $p$ and $q$ be distinct $z-$ultrafilters, then there exist $A\in p$ and $B\in q$ such that
$A\cap B=\emptyset$.( See Lemma 2.3 in \cite{Akbari}).

For $x\in S$, we define $\widehat{x}=\{Z(f):f\in Lmc(S),\,\,f(x)=0\}$.
Let $Z(h)\in Z(Lmc(S))$ and $Z(h)\notin \widehat{x}$. Then $M=\{\widehat{f}\in \mathcal{C}(S^{Lmc}):f(x)=0\}$
is a maximal ideal in $\mathcal{C}(S^{Lmc})$
and so $\widehat{h}\notin M$. Thus there exists a $\widehat{f}\in M$ such that $Z(\widehat{f})\cap Z(\widehat{h})=\emptyset$.
Thus $\widehat{x}\cup\{Z(h)\}$ has not the finite intersection property, and so $\widehat{x}$ is a $z-$ultrafilter.

The space $zu(S)$ is equipped with a topology whose base is $\{
(\widehat{A})^{c}: A\in Z(Lmc(S))\}$, where $\widehat{A}=\{p\in zu(S):
A\in p\}$, is a compact space which is not Hausdorff in general. By Lemma 2.7. in \cite{Akbari}, for each $p\in zu(S)$
there exists $\mu\in S^{Lmc}$ such that $\bigcap_{A\in p}\overline{\varepsilon(A)}=\{\mu\}$, and also for each
$\mu\in S^{Lmc}$ there exists $p\in zu(S)$ such that $\bigcap_{A\in p}\overline{\varepsilon(A)}=\{\mu\}$. We say $p\in zu(S)$ converges
to $\mu\in S^{Lmc}$ if $\mu\in\bigcap_{A\in p}\overline{\varepsilon(A)}$. \\
 Now, we define the relation $\sim$ on $zu(S)$ such that $p\sim q$ if and only if
\begin{equation*}
 \bigcap_{A\in p}\overline{\varepsilon(A)}=\bigcap_{B\in
q}\overline{\varepsilon(B)}.
\end{equation*}
 It is obvious that $\sim$ is an
equivalence relation on $zu(S)$ and denote $[[p]]$ as the equivalence class
of $p\in zu(S)$.  So for each $p\in zu(S)$ there is a unique $\mu\in S^{Lmc}$ such that
\[
[[p]]=\{p\in zu(S):\bigcap_{A\in p}\overline{\varepsilon(A)}=\{\mu\}\},
\]
in fact, $[[p]]$ is the collection of all $z-$ultrafilters in $Lmc(S)$ that converge to $\mu\in S^{Lmc}$. Let $\frac{zu( S)}{\sim}$ be the corresponding
quotient space with the quotient map $\pi:zu(S)\rightarrow \frac{zu(S)}{\sim}$.
For every $p\in zu(S)$, define $\widetilde{p}$ by $\widetilde{p}=\bigcap [[p]]$, put
$\widetilde{A}=\{\widetilde{p}:A\in p\}$ for $A\in Z(Lmc(S))$ and
$\mathcal{R}=\{\widetilde{p}:p\in zu(S)\}$. It is obvious that
 $\{(\widetilde{A})^{c}:A\in Z(Lmc(S))\}$ is a basis for a
topology on $ \mathcal{R}$, $\mathcal{R}$ is a Hausdorff and compact
space and also $S^{Lmc}$ and $\mathcal{R}$  are homeomorphic
(see \cite{Akbari}). So we have
$\mathcal{R}=\{\mathcal{A}^{\mu}:\mu\in S^{Lmc}\}$, where
$\mathcal{A}^{\mu}=\widetilde{p}$ and $\A^{\varepsilon(x)}=\widehat{x}$ for $x\in S$.
For all $x,y\in S$, we define
\[
\mathcal{A}^{\varepsilon(x)}*\mathcal{A}^{\epsilon(y)}=\{ Z(f)\in Z(Lmc(S)):\;
Z(\mathbf{T}_{\varepsilon(y)}f)\in \mathcal{A}^{\varepsilon(x)}\}.
\]
It has been shown that $\mathcal{A}^{\varepsilon(x)}*\mathcal{A}^{\varepsilon(y)}=\mathcal{A}^{\varepsilon(xy)}$,
for each $x,y\in S$, see \cite{Akbari}. Now, let $\{x_{\alpha}\}$ and  $\{y_{\beta}\}$ be two nets in $S$, such
that $lim_{\alpha}\varepsilon(x_{\alpha})=\mu$ and
$lim_{\beta}\varepsilon(y_{\beta})=\nu$, for $\mu,\nu\in
S^{Lmc}$. We define
\[
\mathcal{A}^{\mu}*\mathcal{A}^{\nu}=lim_{\alpha}(lim_{\beta}(
\mathcal{A}^{\varepsilon(x_{\alpha})}*\mathcal{A}^{\varepsilon(y_{\beta})})).
\]
This Definition is well-defined and $(\mathcal{R},e)$ is
 a compact right topological semigroup,
where $e:S\rightarrow \mathcal{R}$ is defined by
$e[x]=\widehat{x}$. Also the mapping
$\varphi:S^{Lmc}\rightarrow \mathcal{R}$ defined by
$\varphi(\mu)=\widetilde{p}$, where $\bigcap_{A\in
p}\overline{\varepsilon(A)}=\{\mu\}$, is an isomorphism (see
\cite{Akbari}).

The operation ``$\cdot$" on S extends uniquely to $(\mathcal{R},*)$. Thus
$(\mathcal{R}, e)$ is a semigroup compactification of $(S,\cdot)$, that
$e:S\rightarrow \mathcal{R}$ is an evaluation map. Also $e[S]$ is a subset of the
topological center of $\mathcal{R}$ and
$cl_{\mathcal{R}}(e[S])=\mathcal{R}$. Hence $S^{Lmc}$ and $\mathcal{R}$ are topologically isomorphic and
so $S^{Lmc}\simeq\mathcal{R}$. For more details see
\cite{Akbari}.

A $z-$filter $\mathcal{A}$ on $\LM$ is called a pure $z-$filter if
 for some $z$-ultrafilter $p$, $\mathcal{A}\subseteq p$
implies that $\mathcal{A}\subseteq \widetilde{p}$. For
a $z-$filter $\mathcal{A}\subseteq Z(\LM)$, we define\\
$(i)$ $\overline{\mathcal{A}}=\{\widetilde{p}\in
S^{Lmc}:\mbox{There exists a }z-\mbox{ultrafilter }p$ such that
$\mathcal{A}\subseteq p\},$\\
$(ii)$ $\A^{\circ}=\{A\in \A:
\overline{\A}\subseteq(\overline{\varepsilon(A)})^{\circ}\}.$
\begin{defn}
Let $\A$ and $\B$ be two $z-$filters on $\LM$ and $A\in Z(\LM)$. We
say $A\in\A+\B$ if and only if for each $F\in Z(\LM)$,
$\Omega_{\B}(A)\subset F$ implies $F\in\A$, where
$\Omega_{\B}(A)=\{x\in S:\lambda_{x}^{-1}(A)\in\A\}$.
\end{defn}

Let $\A$ and $\mathcal{B}$  be $z-$filters in $Lmc(S)$
then $\A + \B$ is a $z-$filter, and we define  $\A\odot
\B=\bigcap\overline{\A +\mathcal{B}}$. So $\A\odot \B$ is a pure $z-$filter generated by
$\overline{\A+\B}$.

In this paper, $\mathbb{Q}$ denote rational numbers with natural
topology. Also we replace $\overline{\varepsilon(A)}$ with
$\overline{A}$ for simplicity.\\
Now we describe some Definitions and Theorems of \cite{Akbari2} that applied in next sections.

\begin{defn}
Let $\Gamma$ be a subset of pure $z-$filters in $Lmc(S)$.
 We say $f:\Gamma\rightarrow\bigcup \Gamma$ is a topological choice function
 if for each $\mathcal{L} \in
\Gamma,$ $\overline{\mathcal{L}}\subseteq
(\overline{f(\mathcal{L}}))^{\circ}$.
\end{defn}

\begin{thm}
Let $\Gamma$ be a set of pure $z-$filters in $Lmc(S)$.
Statements $(a)$ and $(a')$ are equivalent, statements $(b)$ and
$(b')$ are equivalent and statement $(c)$ is equivalent to the
conjunction of statements $(a)$ and $(b)$.\\
$(a)$ Given any topological
 choice function $f$ for $\Gamma$, there is a finite subfamily
$\mathcal{F}$ of $\Gamma$ such that
$\overline{S}=S^{Lmc}=\bigcup_{\mathcal{L}\in
\mathcal{F}}(\overline{f(\mathcal{L})})^{\circ}$.\\
 $(a')$  For each $\widetilde{p} \in
S^{Lmc}$ there is some $\mathcal{L} \in \Gamma$ such
that $\mathcal{L}\subseteq \widetilde{p}$.\\
$(b)$ Given distinct $\mathcal{L}$ and $\mathcal{K}$ in $\Gamma$,
there exists $B\in\K^{\circ}$ such that for every $A \in
Z(Lmc(S))$ if
$\overline{S}-(\overline{B})^{\circ}\subseteq
(\overline{A})^{\circ}$ then $A \in \mathcal{L}$.\\
 $(b')$ For each $\widetilde{p} \in
S^{Lmc}$ there is at most one $\mathcal{L} \in \Gamma$
such that $\mathcal{L} \subseteq \widetilde{p}$.\\
 $(c)$ There is an
equivalence relation $R$ on $S^{Lmc}$ such that each
equivalence class is closed in $S^{Lmc}$ and
$\Gamma=\{\,\,\bigcap [\widetilde{p}]_{R}: \widetilde{p} \in
S^{Lmc}\}$.
\end{thm}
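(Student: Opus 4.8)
The plan is to establish the three equivalences separately, using throughout the identification $\mathcal{R}\simeq S^{Lmc}$ and the following facts about pure $z$-filters, available from \cite{Akbari} and \cite{Akbari2}: for a pure $z$-filter $\mathcal{L}$ one has $\overline{\mathcal{L}}=\{\widetilde{p}\in S^{Lmc}:\mathcal{L}\subseteq\widetilde{p}\}$, this set is closed, $\mathcal{L}=\bigcap\overline{\mathcal{L}}$ (so $\mathcal{L}\mapsto\overline{\mathcal{L}}$ is a bijection from pure $z$-filters to closed subsets of $S^{Lmc}$ with inverse $C\mapsto\bigcap C$), $\overline{\mathcal{L}}=\bigcap_{A\in\mathcal{L}^{\circ}}\overline{A}$, and $\overline{A}=\widetilde{A}=\{\widetilde{p}\in S^{Lmc}:A\in\widetilde{p}\}$, these sets forming a base for the closed sets of the compact Hausdorff (hence normal) space $S^{Lmc}$. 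A step used repeatedly is the following \emph{regularity observation}: if $F\subseteq S^{Lmc}$ is closed and $\widetilde{p}\notin F$, then by normality pick an open $W$ with $F\subseteq W\subseteq\overline{W}\subseteq S^{Lmc}\setminus\{\widetilde{p}\}$; since $\overline{W}$ is an intersection of sets $\overline{A}$ and misses $\widetilde{p}$, there is $A$ with $\overline{W}\subseteq\overline{A}$ and $\widetilde{p}\notin\overline{A}$, and then $F\subseteq W\subseteq(\overline{A})^{\circ}$ because $W$ is open and contained in $\overline{A}$.

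For $(a')\Leftrightarrow(a)$: assuming $(a')$ and given a topological choice function $f$, each $\widetilde{p}\in S^{Lmc}$ lies in some $\overline{\mathcal{L}}$ with $\mathcal{L}\in\Gamma$ (one with $\mathcal{L}\subseteq\widetilde{p}$), hence in $(\overline{f(\mathcal{L})})^{\circ}$; so the open sets $(\overline{f(\mathcal{L})})^{\circ}$ cover $S^{Lmc}$ and compactness yields the desired finite $\mathcal{F}$. Conversely, if $(a')$ fails at some $\widetilde{p_{0}}$, then by purity $\widetilde{p_{0}}\notin\overline{\mathcal{L}}=\bigcap_{A\in\mathcal{L}^{\circ}}\overline{A}$ for every $\mathcal{L}\in\Gamma$, so we may pick $f(\mathcal{L})\in\mathcal{L}^{\circ}\subseteq\bigcup\Gamma$ with $\widetilde{p_{0}}\notin\overline{f(\mathcal{L})}$; this $f$ is a topological choice function by the very definition of $\mathcal{L}^{\circ}$, yet $\widetilde{p_{0}}$ lies in none of the sets $(\overline{f(\mathcal{L})})^{\circ}$, so no finite subfamily covers $S^{Lmc}$ and $(a)$ fails.

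For $(b')\Leftrightarrow(b)$: assume $(b')$ and take distinct $\mathcal{L},\mathcal{K}\in\Gamma$; then $(b')$ forces $\overline{\mathcal{L}}\cap\overline{\mathcal{K}}=\emptyset$, and since $\overline{\mathcal{K}}=\bigcap_{B\in\mathcal{K}^{\circ}}\overline{B}$ with $\mathcal{K}^{\circ}$ closed under finite intersection, compactness of $\overline{\mathcal{L}}$ gives a single $B\in\mathcal{K}^{\circ}$ with $\overline{B}\cap\overline{\mathcal{L}}=\emptyset$. If now $A\in Z(Lmc(S))$ satisfies $S^{Lmc}\setminus(\overline{B})^{\circ}\subseteq(\overline{A})^{\circ}$, then $\overline{\mathcal{L}}\subseteq S^{Lmc}\setminus\overline{B}\subseteq(\overline{A})^{\circ}\subseteq\overline{A}$, so every $\widetilde{p}\in\overline{\mathcal{L}}$ has $A\in\widetilde{p}$, whence $A\in\bigcap\overline{\mathcal{L}}=\mathcal{L}$; this is $(b)$. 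Conversely, if $(b')$ fails there are distinct $\mathcal{L},\mathcal{K}\in\Gamma$ with $\mathcal{L},\mathcal{K}\subseteq\widetilde{p}$; choosing $B\in\mathcal{K}^{\circ}$ as in $(b)$, from $B\in\mathcal{K}\subseteq\widetilde{p}$ and $\overline{\mathcal{K}}\subseteq(\overline{B})^{\circ}$ we get $\widetilde{p}\in(\overline{B})^{\circ}$, so $\widetilde{p}\notin F:=S^{Lmc}\setminus(\overline{B})^{\circ}$; the regularity observation yields $A$ with $F\subseteq(\overline{A})^{\circ}$ and $\widetilde{p}\notin\overline{A}$, but then $(b)$ forces $A\in\mathcal{L}\subseteq\widetilde{p}$ and hence $\widetilde{p}\in\overline{A}$, a contradiction.

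Finally, $(c)\Leftrightarrow(a)\wedge(b)$. If $(c)$ holds with relation $R$, then for each $\widetilde{p}$ we have $\bigcap[\widetilde{p}]_{R}\in\Gamma$ with $\bigcap[\widetilde{p}]_{R}\subseteq\widetilde{p}$, giving $(a')$, while $\overline{\bigcap[\widetilde{r}]_{R}}=[\widetilde{r}]_{R}$ shows that $\mathcal{L}=\bigcap[\widetilde{r}]_{R}\subseteq\widetilde{p}$ forces $\widetilde{p}\in[\widetilde{r}]_{R}$, so the bottom filter at $\widetilde{p}$ is unique, giving $(b')$. Conversely, under $(a')$ and $(b')$ each $\widetilde{p}$ has a unique $\mathcal{L}_{p}\in\Gamma$ with $\mathcal{L}_{p}\subseteq\widetilde{p}$; define $R$ by $\widetilde{p}\,R\,\widetilde{q}\iff\mathcal{L}_{p}=\mathcal{L}_{q}$. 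Uniqueness gives $[\widetilde{p}]_{R}=\{\widetilde{q}:\mathcal{L}_{p}\subseteq\widetilde{q}\}=\overline{\mathcal{L}_{p}}$, which is closed, and $\bigcap[\widetilde{p}]_{R}=\bigcap\overline{\mathcal{L}_{p}}=\mathcal{L}_{p}$; since every $\mathcal{L}\in\Gamma$ equals $\mathcal{L}_{p}$ for any $\widetilde{p}\supseteq\mathcal{L}$, we obtain $\Gamma=\{\bigcap[\widetilde{p}]_{R}:\widetilde{p}\in S^{Lmc}\}$. The main obstacle I anticipate is the direction $(a)\Rightarrow(a')$ — converting an ``uncovered'' point into a genuine topological choice function witnessing the failure of $(a)$ — together with the interior-versus-closure bookkeeping in $(b)\Leftrightarrow(b')$; both hinge on the representation lemmas for pure $z$-filters (that $\mathcal{L}=\bigcap\overline{\mathcal{L}}$, that $\overline{\mathcal{L}}=\bigcap\{\overline{A}:A\in\mathcal{L}^{\circ}\}$, and that closed subsets of $S^{Lmc}$ correspond to pure $z$-filters) from \cite{Akbari2} and on the normality of $S^{Lmc}$.
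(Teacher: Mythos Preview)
Your argument is correct. The paper itself does not prove this theorem: its ``proof'' is simply the reference ``See Theorem 3.2 in \cite{Akbari2}.'' You have supplied a self-contained direct proof that rests on exactly the representation facts for pure $z$-filters that the paper imports from \cite{Akbari} and \cite{Akbari2} (namely $\overline{\mathcal{L}}=\{\widetilde p:\mathcal{L}\subseteq\widetilde p\}$, $\mathcal{L}=\bigcap\overline{\mathcal{L}}$, $\overline{A}=\widetilde A$, and that $\{\overline{A}:A\in Z(Lmc(S))\}$ is a base for the closed sets of the compact Hausdorff space $S^{Lmc}$).

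Two small points worth making explicit, since you flagged them as potential obstacles. First, in $(b')\Rightarrow(b)$ you use that $\mathcal{K}^{\circ}$ is closed under finite intersection; this follows because for zero sets one has $\overline{A\cap B}=\overline{A}\cap\overline{B}$ (from $A\cap B\in\widetilde p\iff A\in\widetilde p$ and $B\in\widetilde p$), so $B_1,B_2\in\mathcal{K}^{\circ}$ gives $\overline{\mathcal{K}}\subseteq(\overline{B_1})^{\circ}\cap(\overline{B_2})^{\circ}\subseteq(\overline{B_1\cap B_2})^{\circ}$. Second, your regularity observation also underlies the fact $\overline{\mathcal{L}}=\bigcap_{A\in\mathcal{L}^{\circ}}\overline{A}$ needed in the contrapositive of $(a')\Rightarrow(a)$: given $\widetilde{p_0}\notin\overline{\mathcal{L}}$, normality produces $A$ with $\overline{\mathcal{L}}\subseteq(\overline{A})^{\circ}$ and $\widetilde{p_0}\notin\overline{A}$, and then $\mathcal{L}=\bigcap\overline{\mathcal{L}}$ forces $A\in\mathcal{L}$, hence $A\in\mathcal{L}^{\circ}$. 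With those two checks made explicit, every step goes through.
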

\begin{proof}
See Theorem 3.2 in \cite{Akbari2}.
\end{proof}

Let $\Gamma\subseteq Z(Lmc(S))$ be a set of pure
$z-$filters. We define
\begin{equation*}
 A^{*}=\{ \LL\in\Gamma: \overline{\LL}\bigcap \overline{A}\neq
\emptyset\},
\end{equation*}
for each $A\in Z(Lmc(S)$. Then $\{\, (A^{*})^{c}: A\in Z(Lmc(S))\}$
is a base for the topology $\tau$
on $\Gamma$ and $\tau$ is called quotient topology on $\Gamma$ generated
by $\{\, (A^{*})^{c}: A\in Z(Lmc(S))\}$. Let $R$
be an equivalence relation on $S^{Lmc}$ and let
$\Gamma=\{ \cap [\widetilde{p}]_{R}: \widetilde{p}\in
S^{Lmc}\}$. Then for every $A\in Z(Lmc(S))$, we
have $A^{*}=\{\cap[\widetilde{p}]_{R}:\widetilde{p}\in
\overline{A}\}$.  It has been shown that, if $S^{Lmc}/R$ be a Hausdorff space then
with the quotient topology on $\Gamma$, $\Gamma$ and $S^{Lmc}/R$ are homeomorphic (i.e.  $\Gamma\approx S^{Lmc}/R$).( See Theorem 3.5 in \cite{Akbari2}.)
\begin{thm}
Let $\Gamma$ be a set of pure $z-$filters in $Lmc(S)$ with
the quotient topology. There is an equivalence relation $R$ on
$S^{Lmc}$ such that $S^{Lmc}/R$ is Hausdorff,
$\Gamma=\{\, \cap[p]_{R}:p\in S^{Lmc}\}$, and
$\Gamma\approx S^{Lmc}/R$ if and only if both of the
following statements hold:\\
 (a) Given any topological choice function f for $\Gamma$, there
 is a finite subfamily $\mathcal{F}$ of  $\Gamma$ such that
 $\overline{S}=\bigcup_{\LL\in
 \mathcal{F}}(\overline{f(\LL)})^{\circ}$,\\
 (b) Given distinct $\LL$ and $\K$ in $\Gamma$, there exist
 $A\in \LL^{\circ}$ and $B\in \K^{\circ}$ such that whenever $\C\in
 \Gamma$, for each $H,T\in Z(Lmc(S))$ either
 $\overline{S}-(\overline{A})^{\circ}\subseteq(\overline{H})^{\circ}$then
 $H\in \C$ or
 $\overline{S}-(\overline{B})^{\circ}\subseteq(\overline{T})^{\circ}$then
 $T\in \C$.
 \end{thm}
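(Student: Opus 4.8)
The plan is to deduce Theorem 2.8 directly from the previous theorem (Theorem 2.7) together with the description of the quotient topology recalled just before the statement. First I would observe that conditions (a) in Theorem 2.8 and (a) in Theorem 2.7 are literally the same statement, so by Theorem 2.7 (the equivalence of $(a)$ and $(a')$) condition (a) is equivalent to: for each $\widetilde p\in S^{Lmc}$ there is some $\mathcal L\in\Gamma$ with $\mathcal L\subseteq\widetilde p$. So the content to extract is that condition (b) of Theorem 2.8, in the presence of (a), is equivalent to condition (b) of Theorem 2.7 (equivalently, to $(b')$: for each $\widetilde p$ there is at most one $\mathcal L\in\Gamma$ with $\mathcal L\subseteq\widetilde p$), and that (a) and (b) together are exactly what is needed to produce the equivalence relation $R$ with $S^{Lmc}/R$ Hausdorff and $\Gamma\approx S^{Lmc}/R$.

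The forward direction is the easy half: if such an $R$ exists with $S^{Lmc}/R$ Hausdorff and $\Gamma=\{\cap[\widetilde p]_R:\widetilde p\in S^{Lmc}\}$, then each equivalence class is closed (Hausdorff quotients of compact spaces force closed classes via the closed quotient map), so statement (c) of Theorem 2.7 holds, hence (a) and (b) of Theorem 2.7 hold; (a) is then immediate and it remains to upgrade (b) of Theorem 2.7 to the formally stronger-looking (b) of Theorem 2.8. Here I would use the identification $\Gamma\approx S^{Lmc}/R$ together with the base $\{(A^{*})^{c}:A\in Z(Lmc(S))\}$ for the quotient topology and the formula $A^{*}=\{\cap[\widetilde p]_R:\widetilde p\in\overline A\}$ recalled before the statement: Hausdorffness of $S^{Lmc}/R$ means that distinct $\mathcal L,\K\in\Gamma$ can be separated by basic open sets $(A^{*})^{c}$ and $(B^{*})^{c}$, and unwinding what $\mathcal L\in(A^*)^c$, $\K\in(B^*)^c$, $(A^*)^c\cap(B^*)^c=\emptyset$ mean — in terms of $\overline{\mathcal L}\cap\overline A=\emptyset$, $\overline\K\cap\overline B=\emptyset$, and every $\C\in\Gamma$ meeting $\overline A$ or $\overline B$ — should give exactly the dichotomy in Theorem 2.8(b), after translating "$\overline\C\cap\overline A\ne\emptyset$" into "$\overline S-(\overline A)^{\circ}\subseteq(\overline H)^{\circ}\Rightarrow H\in\C$" using that $\overline\C$ is a closed set disjoint from nothing forcing membership, i.e. the characterization of $(A)^*$ via complements of interiors.

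For the converse — the main obstacle — I would assume (a) and (b) of Theorem 2.8 and must manufacture $R$. Condition (a) gives, via Theorem 2.7, that every $\widetilde p$ contains some $\mathcal L\in\Gamma$; I then need (b) to force uniqueness, i.e. $(b')$, so that the assignment $\widetilde p\mapsto$ the unique $\mathcal L_{\widetilde p}\in\Gamma$ with $\mathcal L_{\widetilde p}\subseteq\widetilde p$ is well defined and $R$ can be defined by $\widetilde p\,R\,\widetilde q\iff\mathcal L_{\widetilde p}=\mathcal L_{\widetilde q}$; then by construction $\Gamma=\{\cap[\widetilde p]_R\}$ and statement (c) of Theorem 2.7 holds provided the classes are closed. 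The real work is: (1) showing Theorem 2.8(b) implies Theorem 2.7(b) (so that $(b')$ holds) — this is a matter of checking that the two-sided dichotomy with the auxiliary $\C$ is at least as strong as the one-sided separation statement in 2.7(b), which I expect to be a direct but fiddly logical manipulation using $A\in\LL^{\circ}$ versus $B\in\K^{\circ}$; and (2) showing the resulting $S^{Lmc}/R$ is Hausdorff, not merely $T_1$. For (2) I would run the argument of Theorem 3.5 of \cite{Akbari2} in reverse: condition (b) of Theorem 2.8 is precisely engineered to produce, for distinct $\mathcal L,\K$, disjoint basic open neighbourhoods $(A^*)^c\ni\mathcal L$ and $(B^*)^c\ni\K$ in the quotient topology on $\Gamma$ — the sets $A\in\LL^{\circ}$, $B\in\K^{\circ}$ are the separating zero-sets, and the clause "whenever $\C\in\Gamma$, either ... or ..." is exactly the assertion that no $\C$ lies in both $A^*$ and $B^*$. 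Combined with $\Gamma\approx S^{Lmc}/R$ (which follows once we know the quotient is Hausdorff, again via Theorem 3.5 of \cite{Akbari2}), this closes the loop. I expect step (1), pinning down that the elaborate hypothesis (b) is neither stronger nor weaker than what is actually needed, to be where the care is required; everything else is bookkeeping on the base $\{(A^*)^c\}$ and the dictionary between $z$-filters in $\Gamma$ and closed subsets of $S^{Lmc}$.
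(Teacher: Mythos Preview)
The paper does not prove this theorem at all: its entire proof is the sentence ``See Theorem 3.9 in \cite{Akbari2}.'' There is therefore no in-paper argument against which to compare your proposal.

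That said, your plan is the natural one and is almost certainly what the cited reference carries out: identify condition (a) here with condition (a) of Theorem~2.7, use Theorem~2.7 to pass between (a),(b) and the existence of an equivalence relation $R$ with closed classes and $\Gamma=\{\cap[\widetilde p]_R\}$, and then recognise condition (b) of the present theorem as precisely the Hausdorff separation axiom for the quotient topology on $\Gamma$ with base $\{(A^*)^c:A\in Z(Lmc(S))\}$. Your identification of step (1) --- that Theorem~2.8(b) implies Theorem~2.7(b)/(b$'$) --- as the place needing care is accurate; it follows by specialising $\mathcal C$ to $\mathcal L$ (and to $\mathcal K$) and using $A\in\mathcal L^\circ$ to rule out one alternative.

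One bookkeeping point to tighten in your translation: the zero-sets $A\in\mathcal L^\circ$, $B\in\mathcal K^\circ$ appearing in condition (b) are \emph{neighbourhoods} of the respective classes (that is, $\overline{\mathcal L}\subseteq(\overline A)^\circ$), not the basic separating sets $(C^*)^c$ themselves. The clause ``for each $H$, if $\overline S-(\overline A)^\circ\subseteq(\overline H)^\circ$ then $H\in\mathcal C$'' unwinds to $\overline{\mathcal C}\subseteq\overline S-(\overline A)^\circ$, i.e.\ $\overline{\mathcal C}\cap(\overline A)^\circ=\emptyset$; so the dichotomy in (b) says exactly that no $\mathcal C$ meets both $(\overline A)^\circ$ and $(\overline B)^\circ$, which gives disjoint saturated open neighbourhoods $\gamma((\overline A)^\circ)\ni\mathcal L$ and $\gamma((\overline B)^\circ)\ni\mathcal K$ in $\Gamma$. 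With that correction your sketch goes through.
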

\begin{proof}
See Theorem 3.9 in \cite{Akbari2}.
\end{proof}
\begin{rem}
i) $\Gamma$ is called a quotient of $S^{Lmc}$ if and only if
$\Gamma$ is a set of pure $z-$filters in $Lmc(S)$ with the
quotient topology satisfying statements $(a)$ and $(b)$ of Theorem
2.8. Let $\Gamma$ be a quotient of $S^{Lmc}$, then by Theorem
2.7, for each $\widetilde{p}\in S^{Lmc}$ there is a unique
$\LL\in\Gamma$ such that $\LL\subseteq \widetilde{p}$. Then $\gamma
:S^{Lmc}\rightarrow \Gamma$ by
$\gamma(\widetilde{p})\subseteq \widetilde{p}$ is a quotient map.
We define $e:S\rightarrow\Gamma$ by $e(s)=\gamma(\widehat{s})$,( see Corollary 3.12 of \cite{Akbari2}.)

ii) Let $\Gamma$ be a quotient of $S^{Lmc}$. If for
 each $\A$ and $\B$ in $\Gamma$ there is some $\C\in \Gamma$
 with $\C\subseteq \A\odot\B$, we define $\dot{+}$ on $\Gamma$ by $\A\dot{+} \B\in
 \Gamma$ and $\A\dot{+} \B\subseteq \A\odot \B$. Then $e$ is a continuous homomorphism from $S$ to $\Gamma$,
 $\Gamma$ is a right topological semigroup, $e[S]$ is dense in $\Gamma$, and
 the function $\lambda_{e(s)}$ is continuous for every $s\in S$,( see Theorem 3.15 in \cite{Akbari2}.)
 \end{rem}
\begin{thm}
Let $\Gamma$ be a set of pure $z-$filters with the quotient
topology. The following statements are equivalent:\\
 (a) There exists a continuous function
$h:S\longrightarrow \Gamma$ such that $\Gamma$ is a Hausdorff
compact space, $h[S]$ is dense in $\Gamma$ and

 (i) for each $s\in
S$, $s\in\bigcap h(s)$ and

(ii) for distinct $\LL$ and $\K$ in $\Gamma$, there exists $B\in
\K^{\circ}$ such that for each  $A\in Z(Lmc(S))$ if
$\overline{S}-(\overline{B})^{\circ}\subseteq
(\overline{A})^{\circ}$ then $A\in\LL$.\\
(b) $\Gamma$ is quotient of $S^{Lmc}$.\\
(c) $\Gamma$ is quotient of $S^{Lmc}$, $\Gamma$ is a
Hausdorff compact space and $e[S]$ is dense in $\Gamma$.
\end{thm}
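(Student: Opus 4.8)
The plan is to prove the equivalence of (a), (b), and (c) by establishing a cycle of implications, leaning heavily on Theorem 2.8 and Remark 2.9, which already package most of the machinery. First I would show (b) $\Rightarrow$ (c). If $\Gamma$ is a quotient of $S^{Lmc}$, then by definition it satisfies statements (a) and (b) of Theorem 2.8, so by that theorem there is an equivalence relation $R$ on $S^{Lmc}$ with $S^{Lmc}/R$ Hausdorff and $\Gamma \approx S^{Lmc}/R$; in particular $\Gamma$ is a Hausdorff compact space since $S^{Lmc}$ is compact and $R$-saturated closed sets give a compact Hausdorff quotient. Density of $e[S]$ in $\Gamma$ follows from Remark 2.9(i): $e = \gamma \circ \varepsilon$ (up to the identifications $\widehat{s} \leftrightarrow \varepsilon(x)$), $\varepsilon[S]$ is dense in $S^{Lmc}$, and $\gamma$ is a continuous surjection, so $e[S] = \gamma[\varepsilon[S]]$ is dense. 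Thus (c) holds.

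Next, (c) $\Rightarrow$ (a) is essentially a matter of unwinding definitions: take $h = e$. We must check (i) and (ii). For (i), $s \in \bigcap e(s)$ means $s \in \bigcap \gamma(\widehat{s})$, and since $\gamma(\widehat{s}) \subseteq \widehat{s}$ with $\widehat{s} = \{Z(f): f(s)=0\}$, every zero set in $\gamma(\widehat{s})$ contains $s$, giving $s \in \bigcap \gamma(\widehat{s})$. For (ii), since $\Gamma$ is a quotient of $S^{Lmc}$ it satisfies (b) of Theorem 2.8, which yields for distinct $\LL$, $\K$ the required $A \in \LL^{\circ}$ and $B \in \K^{\circ}$; condition (ii) here is a weaker one-sided version (only asking for $B \in \K^{\circ}$ controlling membership in $\LL$), so it follows from statement (b) of Theorem 2.7, which is equivalent to (b) of Theorem 2.8 in the presence of (a). (Alternatively, apply Theorem 2.7(b) directly, since by Theorem 2.7 the conjunction of (a) and (b) is equivalent to $\Gamma$ arising from an equivalence relation with closed classes.)

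The main work is (a) $\Rightarrow$ (b): from the existence of the continuous map $h$ with the stated properties, we must recover that $\Gamma$ is a quotient of $S^{Lmc}$, i.e. that statements (a) and (b) of Theorem 2.8 hold. Statement (b) of Theorem 2.8 is the weaker-looking but actually equivalent two-sided separation condition — the key point is that the one-sided condition (a)(ii) here, combined with the fact that $\Gamma$ is Hausdorff, upgrades to the symmetric condition, because in a Hausdorff space distinct points $\LL, \K$ can be separated and one can run the argument of (a)(ii) with the roles of $\LL$ and $\K$ exchanged using a different separating zero set. For statement (a) of Theorem 2.8, the compactness of $\Gamma$ together with continuity of $h$ and density of $h[S]$ is what drives the finite-subcover conclusion: given a topological choice function $f$ for $\Gamma$, the sets $(\overline{f(\LL)})^{\circ}$ pull back (via $h$ and the correspondence $\widetilde{p} \mapsto \bigcap_{A \in \widetilde p} \overline{A}$) to an open cover of the compact space $S^{Lmc}$, from which a finite subcover is extracted and shown to still cover $\overline{S} = S^{Lmc}$. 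I expect the bookkeeping relating $h(s)$, $\gamma(\widehat s)$, and the closures $\overline{A}$ — in particular showing $h$ must coincide with the canonical map $e$ of Remark 2.9, or at least induce the same equivalence relation — to be the delicate step; once $h$ is identified with a genuine quotient map of $S^{Lmc}$, conditions (a) and (b) of Theorem 2.8 fall out from Theorem 2.7 applied to the equivalence relation whose classes are the closed sets $h^{-1}(\LL)$.

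The obstacle, concretely, is verifying that the abstract hypotheses on $h$ in (a) force the fibers $h^{-1}(\{\LL\})$ to be exactly the sets $\overline{\LL}$ (equivalently $\bigcap[\widetilde p]_R$ for a suitable $R$), so that $\Gamma$ really is realized as $\{\bigcap[\widetilde p]_R : \widetilde p \in S^{Lmc}\}$; condition (a)(i) ensures $h$ respects the embedding of $S$, and condition (a)(ii) together with Hausdorffness of $\Gamma$ is precisely what is needed — via Theorem 2.7(b$'$) — to guarantee each $\widetilde p$ lies in at most one member of $\Gamma$, while surjectivity of $h$ and compactness give the "at least one" half, i.e. (a$'$) of Theorem 2.7. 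Assembling these, Theorem 2.7(c) applies and $\Gamma$ satisfies both (a) and (b) of Theorem 2.8, completing the cycle (a) $\Rightarrow$ (b) $\Rightarrow$ (c) $\Rightarrow$ (a).
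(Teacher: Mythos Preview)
The paper does not actually prove this theorem here: its entire proof is the citation ``See Theorem 3.16 in \cite{Akbari2}.'' So there is no argument in the present paper to compare your proposal against.

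On its own merits, your outline is sound for the easier directions. For (b) $\Rightarrow$ (c) you correctly invoke Theorem~2.8 to get $\Gamma \approx S^{Lmc}/R$ Hausdorff (hence compact), and density of $e[S]$ via $e = \gamma \circ \varepsilon$ is fine. For (c) $\Rightarrow$ (a), taking $h=e$ and reading off (i) from $\gamma(\widehat{s})\subseteq\widehat{s}$ is correct; for (ii) you are right that it is literally condition (b) of Theorem~2.7, and since a quotient of $S^{Lmc}$ yields an equivalence relation with closed classes (Theorem~2.8), Theorem~2.7(c) gives you (b) of~2.7 directly.

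The direction (a) $\Rightarrow$ (b) is where your sketch is thin. Condition (a)(ii) is exactly Theorem~2.7(b), hence equivalent to (b$'$): each $\widetilde{p}$ lies in \emph{at most} one member of $\Gamma$. But you still owe two things. First, the ``at least one'' half, i.e.\ condition (a$'$) of Theorem~2.7: your proposed route via ``the sets $(\overline{f(\mathcal L)})^\circ$ pull back to an open cover of $S^{Lmc}$'' presupposes exactly what you are trying to prove (that every $\widetilde p$ sits in some $\overline{\mathcal L}$). The honest argument must instead use the hypotheses on $h$ --- compactness and Hausdorffness of $\Gamma$, density of $h[S]$, and $s\in\bigcap h(s)$ --- to manufacture, for a given $\widetilde p$, an $\mathcal L\in\Gamma$ with $\mathcal L\subseteq\widetilde p$; this is where one needs $h$ to extend continuously to $S^{Lmc}$, which is not automatic and should be justified through the quotient-topology description of $\Gamma$. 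Second, being a quotient of $S^{Lmc}$ requires condition (b) of Theorem~2.8, which is genuinely stronger (two-sided) than Theorem~2.7(b); your remark that Hausdorffness lets you ``run the argument with the roles exchanged'' is the right intuition, but the passage from 2.7(b) to 2.8(b) needs to be written out, not asserted. These are the two gaps to close; the overall architecture of your cycle is reasonable.
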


\begin{proof}
 See Theorem 3.16 in \cite{Akbari2}.
 \end{proof}

\section{\bf Weakly Almost Periodic Compactificatin as a Quotient Space}

 In this section, $S^{wap}$ is described as a space of pure
$z-$filters. We obtain, in Theorem 3.13, a description of $S^{wap}$
as a space of pure filters which is internal to the set of pure filters.
In this section assume
$\widehat{f}=(\varepsilon^{\ast})^{-1}(f)\in\mathcal{C}(S^{Lmc})$ for $f\in Lmc(S)$.
\begin{lem}
Let $\Gamma$ be a quotient of $S^{Lmc}$, $T$ be a
compact Hausdorff space, and let $f:S\rightarrow T$ be a continuous
function with a continuous extension $\widetilde{f}:S^{Lmc}\rightarrow T$. Statements $(a)$ and $(d)$ are equivalent. If $\{
\hat{s}:s\in S\}\subseteq \Gamma$, then all four statements
are equivalent:\\
$(a)$ f has a continuous extension to $\Gamma$.(that is, there
exists $g:\Gamma\rightarrow T$ such that $g\circ e=f$).\\
$(b)$ For each $\LL\in \Gamma$,
$\bigcap\{cl_{T}(f[A]):\overline{\LL}-\overline{A^{c}}\neq\emptyset ,A\in
Z(Lmc(S))\}\neq \emptyset$.\\
$(c)$ For each $\LL\in \Gamma$, $card(\bigcap\{cl_{T}f[A]:
\overline{\LL}-\overline{A^{c}}\neq\emptyset,A\in
Z(Lmc(S))\})=1$.\\
$(d)$ For each $\LL\in\Gamma$, $\widetilde{f}$ is constant on
$\overline{\LL}$.
\end{lem}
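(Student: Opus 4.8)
The plan is to realise $\Gamma$ as the topological quotient $S^{Lmc}/R$ and to reduce all four conditions to the behaviour of the fixed extension $\widetilde{f}$ on the equivalence classes. By Theorem 2.7 there is an equivalence relation $R$ on $S^{Lmc}$ with $\Gamma=\{\bigcap[\widetilde{p}]_{R}:\widetilde{p}\in S^{Lmc}\}$, and by the results of \cite{Akbari2} quoted after Theorem 2.7 (together with the standing hypothesis that $\Gamma$ is a quotient of $S^{Lmc}$) the topology of $\Gamma$ is precisely that of the quotient space $S^{Lmc}/R$. Let $\gamma\colon S^{Lmc}\to\Gamma$ be the quotient map; then $\gamma\circ\varepsilon=e$, and for $\mathcal{L}\in\Gamma$ the fibre $\gamma^{-1}(\mathcal{L})$ coincides with $\overline{\mathcal{L}}$ (using purity of $\mathcal{L}$ and statement $(b')$ of Theorem 2.7) and is a \emph{closed} subset of $S^{Lmc}$. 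I will use two elementary observations valid for every $A\in Z(Lmc(S))$: first, $cl_{T}(f[A])=\widetilde{f}[\overline{A}]$, since $f[A]=\widetilde{f}[\varepsilon(A)]$, $\varepsilon(A)$ is dense in $\overline{A}=cl_{S^{Lmc}}(\varepsilon(A))$, and $\widetilde{f}[\overline{A}]$ is compact (hence closed) and lies between $f[A]$ and $cl_{T}(f[A])$; second, $S^{Lmc}=\overline{A}\cup\overline{A^{c}}$, because $\varepsilon(S)=\varepsilon(A)\cup\varepsilon(A^{c})$ is dense.

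For $(a)\Leftrightarrow(d)$: if $g\colon\Gamma\to T$ is continuous with $g\circ e=f$, then $g\circ\gamma\colon S^{Lmc}\to T$ is continuous and agrees with $f$ on the dense set $\varepsilon(S)$, so $g\circ\gamma=\widetilde{f}$ by Hausdorffness of $T$; since $\gamma$ is constant on $\overline{\mathcal{L}}=\gamma^{-1}(\mathcal{L})$, so is $\widetilde{f}$, which is $(d)$. Conversely, if $\widetilde{f}$ is constant on each fibre of $\gamma$ it factors uniquely through $\gamma$ as $\widetilde{f}=g\circ\gamma$; the map $g$ is continuous because $\Gamma$ carries the quotient topology, and $g\circ e=g\circ\gamma\circ\varepsilon=\widetilde{f}\circ\varepsilon=f$, which is $(a)$.

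Now assume $\{\widehat{s}:s\in S\}\subseteq\Gamma$ and prove $(d)\Rightarrow(c)\Rightarrow(b)\Rightarrow(d)$; only $(c)\Rightarrow(b)$ is immediate. The engine for the other two implications is the following construction: given $\mathcal{L}\in\Gamma$, a point $\mu\in\overline{\mathcal{L}}$ and an open $V\subseteq T$ with $\widetilde{f}(\mu)\notin cl_{T}(V)$, the sets $\{\mu\}$ and $\widetilde{f}^{-1}(cl_{T}(V))$ are disjoint closed subsets of the normal space $S^{Lmc}$, so by Urysohn there is $h\in\mathcal{C}(S^{Lmc})$ with $h(\mu)=0$ and $h\equiv 1$ on $\widetilde{f}^{-1}(cl_{T}(V))$. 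Put $W=h^{-1}([0,\tfrac{1}{2}))$ and $Z=h^{-1}([0,\tfrac{1}{2}])$, a zero set of $\mathcal{C}(S^{Lmc})$, so that $A:=\varepsilon^{-1}(Z)\in Z(Lmc(S))$ (via the isomorphism $\varepsilon^{*}$). Because $\varepsilon(S)$ is dense one checks $\mu\in W\subseteq\overline{A}\subseteq Z$, while $\overline{A^{c}}=cl_{S^{Lmc}}(\varepsilon(S)\setminus Z)\subseteq S^{Lmc}\setminus W$; hence $\mu\in\overline{\mathcal{L}}\setminus\overline{A^{c}}$, so $A$ is admissible in the intersections of $(b)$ and $(c)$, and $cl_{T}(f[A])=\widetilde{f}[\overline{A}]\subseteq\widetilde{f}[Z]$ is disjoint from $V$. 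From this: $(d)\Rightarrow(c)$ --- if $\widetilde{f}\equiv t_{0}$ on $\overline{\mathcal{L}}$, then $t_{0}\in cl_{T}(f[A])$ for every admissible $A$ (such $A$ has $\overline{A}\cap\overline{\mathcal{L}}\neq\emptyset$ by the second observation, where $\widetilde{f}$ takes the value $t_{0}$), and for each $t_{1}\neq t_{0}$ the construction applied to some $\mu\in\overline{\mathcal{L}}$ and a $V\ni t_{1}$ with $t_{0}\notin cl_{T}(V)$ yields an admissible $A$ with $t_{1}\notin cl_{T}(f[A])$, so the intersection in $(c)$ equals $\{t_{0}\}$; $(b)\Rightarrow(d)$ --- contrapositively, if $\widetilde{f}$ takes two distinct values on some $\overline{\mathcal{L}}$, then for every $t^{*}\in T$ one picks $\mu\in\overline{\mathcal{L}}$ with $\widetilde{f}(\mu)\neq t^{*}$ and a $V\ni t^{*}$ separating the two values, obtaining an admissible $A$ with $t^{*}\notin cl_{T}(f[A])$, so the intersection in $(b)$ is empty.

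The step I expect to be the main obstacle is the geometric core of the construction: verifying that the $S^{Lmc}$-closure $\overline{A}=cl_{S^{Lmc}}(\varepsilon(\varepsilon^{-1}(Z)))$ is trapped between the open core $W$ of $Z$ and $Z$ itself, and --- crucially --- that this delivers $\overline{\mathcal{L}}\setminus\overline{A^{c}}\neq\emptyset$ (admissibility for the intersections) rather than merely $\overline{\mathcal{L}}\cap\overline{A}\neq\emptyset$. This is exactly where density of $\varepsilon(S)$ in $S^{Lmc}$, and the hypothesis $\{\widehat{s}:s\in S\}\subseteq\Gamma$ that ties $e$ to $\gamma$, do the work of keeping the passage between $S$-level zero sets and $S^{Lmc}$-level closed sets honest. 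A secondary point requiring care is the identification of the abstract quotient topology on $\Gamma$ (the one generated by the sets $(A^{*})^{c}$) with the topological quotient topology of $S^{Lmc}/R$, which is what makes $g$ continuous in $(d)\Rightarrow(a)$; this is supplied by the cited results of \cite{Akbari2}.
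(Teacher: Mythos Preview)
Your proof is correct. The equivalence $(a)\Leftrightarrow(d)$ is argued exactly as in the paper: $g\circ\gamma$ is a continuous extension of $f$, hence equals $\widetilde{f}$, and conversely $\widetilde{f}$ factors through the quotient map $\gamma$.

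For the remaining equivalences the two proofs diverge. The paper closes the cycle via $(a)\Rightarrow(c)$ and $(b)\Rightarrow(a)$: for $(a)\Rightarrow(c)$ it uses the commutative square $f=g\circ\gamma\circ\varepsilon$ to rewrite $cl_{T}f[A]=g(\gamma(\overline{A}))$ and then reads off that the intersection is $\{g(\mathcal{L})\}$; for $(b)\Rightarrow(a)$ it picks $g(\mathcal{L})$ in the intersection and shows $\widetilde{f}$ is constant on $\overline{\mathcal{L}}$ by the identity $\bigcap_{\widetilde{p}\in\overline{A}}\widetilde{f}[\overline{A}]=\{\widetilde{f}(\widetilde{p})\}$. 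You instead prove $(d)\Rightarrow(c)$ and $(b)\Rightarrow(d)$ directly, manufacturing admissible zero sets $A$ via Urysohn on $S^{Lmc}$: separate $\mu$ from $\widetilde{f}^{-1}(cl_{T}V)$, take a sublevel set $Z$, and pull back to $A=\varepsilon^{-1}(Z)$. Your two ``elementary observations'' ($cl_{T}f[A]=\widetilde{f}[\overline{A}]$ and $S^{Lmc}=\overline{A}\cup\overline{A^{c}}$) are exactly what is needed, and the sandwich $W\subseteq\overline{A}\subseteq Z$ together with $\overline{A^{c}}\subseteq S^{Lmc}\setminus W$ is checked correctly.

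What each approach buys: the paper's route is shorter once one accepts the diagram identity $cl_{T}f[A]=g(\gamma(\overline{A}))$, but it leans on the extension $g$ and on the hypothesis $\{\widehat{s}:s\in S\}\subseteq\Gamma$ to make ``$g\circ e=f$'' transparent in $(b)\Rightarrow(a)$. Your route is more hands-on (it needs normality of $S^{Lmc}$ and the explicit construction of $A$) but works entirely with $\widetilde{f}$ and, in fact, never uses the hypothesis $\{\widehat{s}:s\in S\}\subseteq\Gamma$ in the arguments for $(d)\Rightarrow(c)$ and $(b)\Rightarrow(d)$; your closing remark attributing a role to that hypothesis is inaccurate (recall $e=\gamma\circ\varepsilon$ holds by definition in Remark~2.9, independently of the hypothesis). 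So your argument actually establishes the equivalence of $(b)$, $(c)$, $(d)$ in slightly greater generality than stated.
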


\begin{proof}
 To see that $(a)$ implies $(d)$, note that
$g\circ\gamma$ is continuous extension of $f$ and hence
$g\circ\gamma=\widetilde{f}$.\\
To see that $(d)$ implies $(a)$, define $g$ by
$g(\gamma(\widetilde{p}))=\widetilde{f}(\widetilde{p})$. Since
$\widetilde{f}$ constant on $\overline{\LL}$ for each $\LL\in \Gamma$,
$\,\,g$ is well defined . Therefore $g\circ\gamma=\widetilde{f}$ and
$\gamma$ is a quotient map. This implies $g$ is continuous.\\
Now assume that $\{\hat{s}:s\in S\}\subseteq \Gamma$ so that for
all $s\in S$, $e(s)=\hat{s}$. That $(c)$ implies $(b)$ is trivial.\\
To see that $(a)$ implies $(c)$, we show that
\begin{equation*}  \bigcap\{cl_{T}f(A):
\overline{\LL}-\overline{A^{c}}\neq\emptyset ,A\in
Z(Lmc(S))\}=\{g(\LL)\}.\end{equation*}

Since below diagram commutes,
\[
\begin{matrix}
S &\stackrel{f}{ \rightarrow} & T\\
\stackrel{\varepsilon}{ \downarrow} & \,& \stackrel{g}{\uparrow}\\
S^{Lmc} &\stackrel{\gamma}{ \rightarrow}& \Gamma, \\
\end{matrix}
\]
($i.e.\ f=g\circ\gamma\circ \varepsilon$).\\
Hence for each $A\in Z(Lmc(S))$, we have
\begin{eqnarray*}
cl_{_{T}}f(A)&=& cl_{T}g\circ\gamma\circ\varepsilon(A)=
g\circ\gamma(\overline{\varepsilon(A)})\\
&=&g\circ\gamma(\overline{A})= g(\gamma(\overline{A})).
\end{eqnarray*}

Also if $\overline{\LL}-\overline{A^{c}}\neq\emptyset$, then there exists
$\widetilde{p}\in \overline{\LL}\cap\overline{A}$, ( it is obvious that  $\overline{A}^{c}\subseteq
A^{c}\subseteq \overline{A^{c}}$ hence if $\widetilde{p}\notin
\overline{A^{c}}$ then $\widetilde{p}\in \overline{A}$). So
\begin{equation*}  cl_{T}f(A)=g(\gamma(\overline{A}))=\{g(\gamma(\widetilde{r})):\widetilde{r}\in
\overline{A}\}.\end{equation*}
 Since $\widetilde{p}\in \overline{A}\bigcap
\overline{\LL}$ so $g(\LL)=g(\gamma(\widetilde{p}))\in
cl_{T}f(A)$. Also for each $\K\in \Gamma$ if $\K\neq \LL$ then there
exists $A\in Z(Lmc(S))$ such that $\overline{A}\bigcap
\overline{\K}=\emptyset$, and this conclude  that $g(\K)\notin cl_{T}f(A)$.
Hence
\begin{equation*}  \bigcap \{cl_{T}f(A): \overline{\LL}-\overline{A^{c}}\neq
\emptyset\}=\{g(\LL)\}.\end{equation*}

To see that $(b)$ implies $(a)$. Choose for each $\LL\in \Gamma$
some $g(\LL)\in \bigcap \{cl_{T}f(A):
\overline{\LL}-\overline{A^{c}}\neq \emptyset\}$. It is obvious that
$g\circ e =f$. To see that $g$ is continuous, we show that
$\widetilde{f}$ is constant on $\overline{\LL}$, for each $\LL\in
\Gamma$. Let $\p\in \overline{\LL}$ and $A\in Z(Lmc(S))$,
if $\p\in \overline{A}$ then $\overline{\LL}- \overline{A^{c}}\neq
\emptyset$ and so $g(\LL)\in cl_{T}f(A)$. Since
$cl_{T}f(A)=\widetilde{f}(\overline{A})$ so $\bigcap_{\p\in
\overline{A}}\widetilde{f}(\overline{A})= \{\widetilde{f}(\p)\}$. This
conclude that $g(\LL)=\widetilde{f}(\p)$ and
$g\circ\gamma=\widetilde{f}$. $\gamma$ is a quotient map and
$\widetilde{f}$ is continuous hence $g$ is continuous.
\end{proof}
\begin{lem}
Let $\Gamma$ be a quotient of $S^{Lmc}$ such that
$\{\hat{s}:s\in S\}\subseteq \Gamma$, $(T,+)$ be a compact
Hausdorff right topological semigroup, and  $f$ be a continuous
homomorphism from S to T such that $l_{f(x)}$ is continuous for each
$x\in S$. Let $\bigcap \{cl_{T}f(A):
\overline{\LL}-\overline{A^{c}}\neq \emptyset,\,\,\, A\in
Z(Lmc(S))\}\neq \emptyset$ for each $\LL\in \Gamma$. Let $\LL\dot{+}\K\in \Gamma$
  for each $\LL,\K\in\Gamma$. Then there is a
continuous homomorphism $g: \Gamma\rightarrow T$ such that $g\circ
e= f$.
\end{lem}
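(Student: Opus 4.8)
The plan is to bootstrap from Lemma 3.1, which already gives a continuous extension $g:\Gamma\to T$ of $f$; the only remaining work is to check that this $g$ is a homomorphism for the operation $\dot{+}$ on $\Gamma$. First I would invoke Lemma 3.1: the hypothesis $\bigcap\{cl_{T}f(A):\overline{\LL}-\overline{A^{c}}\neq\emptyset,\ A\in Z(Lmc(S))\}\neq\emptyset$ for each $\LL\in\Gamma$ is precisely statement $(b)$ there, and since $\{\hat{s}:s\in S\}\subseteq\Gamma$ all four statements are equivalent. In particular $(d)$ holds, so the continuous extension $\widetilde{f}:S^{Lmc}\to T$ is constant on each $\overline{\LL}$, and the induced map $g:\Gamma\to T$ defined by $g(\gamma(\widetilde{p}))=\widetilde{f}(\widetilde{p})$ satisfies $g\circ\gamma=\widetilde{f}$ and $g\circ e=f$, with $g$ continuous because $\gamma$ is a quotient map.

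**Next I would establish the homomorphism property** $g(\LL\dot{+}\K)=g(\LL)+g(\K)$ for all $\LL,\K\in\Gamma$. The natural route is to transport everything back to $S^{Lmc}$, where the operation $*$ is already understood. Since $S^{Lmc}\simeq\R$ is the universal $Lmc$-compactification and $f:S\to T$ is a continuous homomorphism with $l_{f(x)}$ continuous for each $x\in S$, by the universal property $\widetilde{f}:S^{Lmc}\to T$ is itself a homomorphism; i.e. $\widetilde{f}(\mu*\nu)=\widetilde{f}(\mu)+\widetilde{f}(\nu)$ for $\mu,\nu\in S^{Lmc}$. Now pick representatives: let $\widetilde{p}\in S^{Lmc}$ with $\gamma(\widetilde{p})=\LL$ and $\widetilde{q}$ with $\gamma(\widetilde{q})=\K$. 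I would verify that $\gamma(\widetilde{p}*\widetilde{q})=\LL\dot{+}\K$, using that $\gamma(\widetilde{p}*\widetilde{q})$ is the unique element of $\Gamma$ contained in $\widetilde{p*q}$ together with the relation between $*$ on $\R$ and the operations $+,\odot,\dot{+}$ on $z$-filters (the fact that $\A\dot{+}\B\subseteq\A\odot\B=\bigcap\overline{\A+\B}$ and that $\odot$ is the pure $z$-filter corresponding to the product in $S^{Lmc}$). Granting this, $g(\LL\dot{+}\K)=g(\gamma(\widetilde{p}*\widetilde{q}))=\widetilde{f}(\widetilde{p}*\widetilde{q})=\widetilde{f}(\widetilde{p})+\widetilde{f}(\widetilde{q})=g(\LL)+g(\K)$.

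**The main obstacle** I expect is the compatibility claim $\gamma(\widetilde{p}*\widetilde{q})=\LL\dot{+}\K$ — that the quotient map $\gamma:S^{Lmc}\to\Gamma$ intertwines $*$ and $\dot{+}$. One must be careful because $\dot{+}$ is only defined under the standing assumption of Remark 2.9(ii) that for each $\A,\B\in\Gamma$ there is some $\C\in\Gamma$ with $\C\subseteq\A\odot\B$, and $\dot{+}$ is then specified only up to the constraint $\A\dot{+}\B\subseteq\A\odot\B$; so the argument must show $\gamma(\widetilde{p}*\widetilde{q})\subseteq\LL\odot\K$ and then use uniqueness (Theorem 2.7, via Remark 2.9(i): for each element of $S^{Lmc}$ there is a unique member of $\Gamma$ contained in it) to pin it down. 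Here I would lean on the description of $\odot$ as the pure $z$-filter generated by $\overline{\A+\B}$ and on the identity $\A^{\varepsilon(x)}*\A^{\varepsilon(y)}=\A^{\varepsilon(xy)}$ together with the net-continuity defining $*$ on all of $\R$, passing to limits along nets $\varepsilon(x_\alpha)\to\mu$, $\varepsilon(y_\beta)\to\nu$ where $\mu,\nu$ are the points with $\widetilde{p}=\A^\mu$, $\widetilde{q}=\A^\nu$.

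**Finally** I would assemble the pieces: $g$ is continuous (Lemma 3.1), $g\circ e=f$ (Lemma 3.1), and $g$ is a homomorphism (the computation above), which is exactly the assertion. A remark worth including is that the continuity of each $l_{f(x)}$ on $T$ is used only to guarantee, via the universal property of $S^{Lmc}$, that $\widetilde{f}$ is a homomorphism into the right topological semigroup $T$; without it one only gets a continuous extension, not a multiplicative one. No separate verification that $g$ respects the topological-center condition is needed, since $T$ is merely assumed right topological and the statement asks only for $g$ to be a continuous homomorphism.
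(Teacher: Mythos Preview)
Your proposal is correct and follows essentially the same route as the paper: invoke Lemma~3.1 to produce the continuous extension $g$ with $g\circ\gamma=\widetilde f$, use the universal property of $S^{Lmc}$ (equivalently, that $(\overline{f(S)},f)$ is a semigroup compactification) to know $\widetilde f$ is a homomorphism, and then compute $g(\LL\dot{+}\K)=g(\gamma(\widetilde p*\widetilde q))=\widetilde f(\widetilde p)+\widetilde f(\widetilde q)=g(\LL)+g(\K)$. The paper handles your ``main obstacle'' in one line by simply citing that $\gamma$ is a homomorphism (this is part of the content behind Remark~2.9(ii), i.e.\ Theorem~3.15 of \cite{Akbari2}), whereas you unpack it via uniqueness and $\A\dot{+}\B\subseteq\A\odot\B$; both amount to the same fact.
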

\begin{proof}
 Since $\overline{f(S)}=H$ is a compact right
topological semigroup, so $(H,f)$ is a semigroup compactification of
S. By Lemma 3.1, $g:\Gamma \rightarrow T$ exists, $g$ is continuous
and $g\circ e= f$. If $\widetilde{f}:S^{Lmc} \rightarrow T$
be extension of $f:S \rightarrow T$, then $\widetilde{f}$ is a
continuous homomorphism and the below diagram commutes,
\[
\begin{matrix}
\, & S^{Lmc} &\stackrel{\gamma}{\longrightarrow} &\, & \Gamma\\
&\stackrel{\widetilde{f}}{\searrow} & \,&\stackrel{g}{\swarrow} &\\
&\, &T & \\
\end{matrix}
\]
(i.e. $g\circ\gamma=\widetilde{f}$).

Hence for each $\LL,\K\in \Gamma$, there exist
$\widetilde{p},\widetilde{q} \in S^{Lmc}$ such that
$\gamma(\widetilde{p})=\LL$ and $\gamma(\widetilde{q})=\K$. Now we
have:
\begin{eqnarray*}
g(\LL\dot{+}\K)&=& g(\gamma(\widetilde{p})\dot{+}\gamma(\widetilde{q}))\\
&=& g(\gamma(\widetilde{p}*\widetilde{q}))\hspace{1cm} \gamma\hspace{.25 cm} \mbox{is a homomorphism}\\
&=& \widetilde{f}(\widetilde{p}*\widetilde{q})\\
&=&\widetilde{f}(\p)+\widetilde{f}(\q)\hspace{1cm} \widetilde{f}\hspace{.25 cm} \mbox{is a homomorphism}\\
&=& g\circ\gamma(\widetilde{p})+ g\circ\gamma(\widetilde{q})\\
&=& \widetilde{f}(\widetilde{p})+ \widetilde{f}(\widetilde{q})\\
&=& g(\LL)+ g(\K),
\end{eqnarray*}
and so $g$ is a homomorphism.
\end{proof}

We use the set $\mathbb{Q}^+$ of positive rational in the definition, but any
other dense subsets of the positive reals would do.
\begin{defn}
$(a)$ $\varphi$ is a topological $W-nest$ in S if and only if\\
(1) $\varphi: \mathbb{Q}^{+}\rightarrow
Z(Lmc(S))$,\\
(2) $S\in Range(\varphi)$ and $\emptyset\notin Range(\varphi)$,\\
(3) if $r,s\in \mathbb{Q}^{+}$ and $r< s$ then
$(\overline{\varphi(r)})^{\circ}\subseteq
\overline{\varphi(r)}\subseteq (\overline{\varphi(s)})^{\circ}$,
and\\
(4) there do not exist r and $\upsilon$ in $\mathbb{Q}^{+}$ and
sequences $<t_{n}>_{n=1}^{\infty}$ and $<s_{n}>_{n=1}^{\infty}$
in S such that $r< \upsilon$ and whenever $k< n$,  $t_{n}s_{k}\in
(\overline{\varphi(r)})^{\circ}$ and $t_{k}s_{n}\notin
\overline{\varphi(\upsilon)}$.\\
$(b)$ $\mathcal{W}(S)$=\{$\varphi:\varphi$ is a topological $W-nest$
in S\}.
\end{defn}

\begin{lem}
(a) If $f\in wap(S)$, $ a\in cl_{\mathbb{C}}f(S)$, and for
 $r\in Q^{+}$, \begin{equation*}  \varphi(r)=\{ s\in S: \mid f(s)-a\mid\leq r\},\end{equation*}
then $\varphi \in\mathcal{W}(S)$. Also if $\widehat{f}(\p)=a$ for some
$\p\in S^{Lmc}$, then $\p\in(\overline{\varphi(r)})^{\circ}$ for each
$r\in \mathbb{Q}$.\\
(b) Let $\varphi\in \mathcal{W}(S)$, then $f(s)=inf\{ r\in
\mathbb{Q}^{+}: s\in\varphi(r)\}$ is a continuous function.\\
(c) If $\varphi\in \mathcal{W}(S)$ and $f$ is defined on S by
$f(s)=inf\{ r\in \mathbb{Q}^{+}:s\in \varphi(r)\}$, then $f\in
wap(S)$. Further, if $\widetilde{p}\in S^{Lmc}$ and
$\widetilde{p}\in \bigcap_{r\in\mathbb{
Q}^{+}}\overline{\varphi(r)}$ then $\widehat{f}(\widetilde{p})=0$.\\
(d) Let $f\in wap(S)$, $ a\in cl_{\mathbb{C}}f(S)$ and $r>0$.
Then
\begin{equation*}  B=\{x\in S:|f(x)-a|\leq r\}\in Z(wap(S)).\end{equation*}
 Also let
$\widehat{f}(\p)=0$ for some $\p\in S^{Lmc}$. Then
$\p\in (\overline{f^{-1}([0,r])})^{\circ}$ for every $r>0$.
\end{lem}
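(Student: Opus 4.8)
The plan is to prove the four assertions of Lemma 3.4 essentially by transferring the corresponding facts about topological $W$-nests and $wap$-functions through the evaluation map, using the characterization of $wap(S)$ in Theorem 2.1 (iterated limits) and the basic properties of the $Lmc$-compactification recorded in Section 2. I would treat parts (a)--(c) first, since (d) is a cosmetic restatement of (a) and (c) once the connection between $Z(wap(S))$ and the sets $\{x:|f(x)-a|\le r\}$ is made explicit.

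For part (a), the set $\varphi(r)=\{s\in S:|f(s)-a|\le r\}$ is the zero set of the function $s\mapsto\max\{|f(s)-a|-r,0\}$, which lies in $wap(S)\subseteq Lmc(S)$ by Lemma 2.3(i) (closure of real-valued $wap$-functions under $\max$) together with the fact that $|f-a|$ and constants are $wap$; hence $\varphi(r)\in Z(Lmc(S))$, giving condition (1) of Definition 3.3(a). Condition (2) holds because for $r$ large enough $\varphi(r)=S$ (as $a\in cl_{\mathbb C}f(S)$ forces $\varphi(r)\neq\emptyset$ for every $r$, and boundedness of $f$ makes $\varphi(r)=S$ eventually), and $\emptyset\notin Range(\varphi)$. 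For condition (3), if $r<s$ I would use that $\widehat{f}$ is continuous on $S^{Lmc}$ so that $\overline{\varphi(r)}\subseteq\{\mu\in S^{Lmc}:|\widehat f(\mu)-a|\le r\}\subseteq\{\mu:|\widehat f(\mu)-a|<s\}^{\circ}\subseteq(\overline{\varphi(s)})^{\circ}$, the last inclusion because $\{\mu:|\widehat f(\mu)-a|<s\}$ is open and contained in $\overline{\varphi(s)}$; this simultaneously gives $(\overline{\varphi(r)})^{\circ}\subseteq\overline{\varphi(r)}$ trivially. Condition (4) is the only substantive point: a pair of sequences $\langle t_n\rangle,\langle s_n\rangle$ violating it would, after passing to subnets so that all the relevant limits $\lim_m\lim_n f(t_m s_n)$ and $\lim_n\lim_m f(t_m s_n)$ exist, produce one iterated limit lying in the disc of radius $r$ about $a$ and the other outside the disc of radius $\upsilon>r$, contradicting Theorem 2.1. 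The final sentence of (a) follows since $\widehat f(\p)=a$ gives $\p\in\{\mu:|\widehat f(\mu)-a|<r\}\subseteq(\overline{\varphi(r)})^{\circ}$.

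For part (b), with $f(s)=\inf\{r\in\mathbb Q^{+}:s\in\varphi(r)\}$, continuity is a standard upper/lower semicontinuity argument: $\{s:f(s)<t\}=\bigcup_{r<t,\,r\in\mathbb Q^{+}}\varphi(r)\supseteq\bigcup_{r<t}(\overline{\varphi(r)})^{\circ}\cap S$, and using nestedness (condition (3)) together with $(\overline{\varphi(r)})^{\circ}\subseteq\overline{\varphi(r)}\subseteq(\overline{\varphi(r')})^{\circ}$ for $r<r'$ one shows $\{s:f(s)<t\}$ and $\{s:f(s)>t\}$ are both open in $S$. For part (c), that $f\in wap(S)$ again goes through Theorem 2.1: given sequences $\langle s_m\rangle,\langle t_n\rangle$ with all limits of $f(s_m t_n)$ existing, an inequality between the two iterated limits would let me pick rationals $r<\upsilon$ separating them and then extract the forbidden configuration of condition (4) in Definition 3.3(a); hence the two iterated limits agree and $f\in wap(S)$. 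The addendum in (c) is immediate: if $\widetilde p\in\bigcap_r\overline{\varphi(r)}$ then by continuity of $\widehat f$ and part (b) applied on $S^{Lmc}$ (i.e. $\widehat f(\mu)=\inf\{r:\mu\in\overline{\varphi(r)}\}$ on $Range(\varepsilon)$, extended by continuity) one gets $\widehat f(\widetilde p)\le r$ for all $r\in\mathbb Q^{+}$, so $\widehat f(\widetilde p)=0$. Part (d) then follows: $B=\varphi(r)$ with the nest built from $f$ and $a$ as in (a), so $B\in Z(wap(S))$ because $B=Z(\max\{|f-a|-r,0\})$ with the inside function in $wap(S)$; and $\widehat f(\p)=0$ with $a=0$ (or translating) gives $\p\in(\overline{\varphi(r)})^{\circ}=(\overline{f^{-1}([0,r])})^{\circ}$ by the last clause of (a).

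The main obstacle I anticipate is condition (4) of Definition 3.3(a) in part (a) and, symmetrically, the derivation of $f\in wap(S)$ from the nest in part (c): both require carefully setting up the double-sequence/double-net bookkeeping so that Theorem 2.1 applies, in particular ensuring by a diagonal or subnet argument that all four iterated limits in question actually exist before invoking the $wap$ criterion, and correctly matching the strict/non-strict inequalities ($t_n s_k\in(\overline{\varphi(r)})^{\circ}$ versus $t_k s_n\notin\overline{\varphi(\upsilon)}$) against the discs of radius $r$ and $\upsilon$. Everything else is routine manipulation with zero sets, the algebra $wap(S)$, and continuity of $\widehat f$ on $S^{Lmc}$.
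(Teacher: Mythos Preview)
Your proposal is correct and follows essentially the same route as the paper: both verify condition (4) in (a) and establish $f\in wap(S)$ in (c) via the double-limit criterion of Theorem~2.1, and both extract continuity in (b) from the nesting condition (3) of Definition~3.3. The only cosmetic difference is that for (b) you outline a direct argument showing the sublevel and superlevel sets of $f$ are open, while the paper argues by contradiction using a net converging to $s$; both arguments hinge on the same inclusion $\overline{\varphi(r)}\subseteq(\overline{\varphi(r')})^{\circ}$ for $r<r'$.
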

\begin{proof}
 (a) Condition (3) is immediate, (2) holds since $f$
is bounded. To see that (1), it is obvious that for each $s\in S$,
$g(s)= min(\mid f(s)-a\mid - r, 0)$ is continuous, and
\begin{equation*}  Z(g)=\{ s\in
S: g(s)= 0\}=\{s\in S:\mid f(s)-a\mid \leq r\}.\end{equation*}
Therefore
$\varphi(r)=Z(g)\in Z(Lmc(S))$. To see that (4) holds,
suppose we have such $r,\upsilon,<t_{n}>_{n=1}^{\infty}$ and
$<s_{n}>_{n=1}^{\infty}$. Then the sequences
$<s_{n}>_{n=1}^{\infty}$ and $<t_{n}>_{n=1}^{\infty}$ so that
$lim_{n\rightarrow\infty}lim_{k\rightarrow\infty}f(t_{n}s_{k})$ and
$lim_{k\rightarrow\infty}lim_{n\rightarrow\infty}f(t_{n}s_{k})$
exist. Then $lim_{n\rightarrow\infty}lim_{k\rightarrow\infty}\mid
f(t_{n}s_{k})-a\mid\geq \upsilon$ while
$lim_{k\rightarrow\infty}lim_{n\rightarrow\infty}\mid
f(t_{n}s_{k})-a\mid\leq r$, a contradiction.\\
If there exists $r\in \mathbb{Q}$ such that
$\p\notin(\overline{\varphi(r)})^{\circ}$ then for every
$s\in\mathbb{Q}\cap (0,r)$ we have $\p\notin(\overline{\varphi(s)})$
and this is a contradiction.

 (b) Let there exists a
net $\{s_{\alpha}\}$ in $S$ such that $lim_{\alpha}s_{\alpha}=s$ in $S$ and $lim_{\alpha}f(s_{\alpha})\neq f(s)$. By thinning,
there exists $\epsilon_{\circ}>0$ such that
\begin{equation*}
 (f(s)-\epsilon_{\circ},f(s)+\epsilon_{\circ})\bigcap
\{f(s_{\alpha}):\alpha>\beta\}=\emptyset
\end{equation*}
 for some $\beta$. Pick $r_{1},r_{2}\in Q^{+}$ such that
\[
(f(s)-\epsilon_{\circ}<r_{1}<f(s)<r_{2}<f(s)+\epsilon_{\circ}),
\]
 so $\varphi(r_{1})\subseteq \varphi(r_{2})$,
$\varphi(r_{1})\neq \varphi(r_{2})$ and $U=\varepsilon^{-1}((\overline{\varphi(r_{1})})^{c}\bigcap
(\overline{\varphi(r_{2})})^{\circ})$ is a non empty open set in
$S$. Since $s\in U$ so there exists $\eta$ such that for
each $\alpha>\eta$, $f(s_\alpha)\in U$ and so
$r_{1}<f(s_{\alpha})<r_{2}$ and this is a contradiction.

 (c) By (b), $f$ is defined every where and is bounded and
continuous. Suppose $f\notin wap(S)$, so pick sequences
$<t_{n}>_{n=1}^{\infty}$ and $<s_{n}>_{n=1}^{\infty}$ such that
\begin{equation*}  lim_{n\rightarrow\infty}lim_{k\rightarrow\infty}f(t_{n}s_{k})=x\end{equation*}
and
$lim_{k\rightarrow\infty}lim_{n\rightarrow\infty}f(t_{n}s_{k})=y$
with $x>y$. Let $\epsilon=\frac{x-y}{3}$. By thinning the
sequences we may assume that whenever $k<n,\,\,
f(t_{n}s_{k})<y+\epsilon$ and $f(t_{k}s_{n})>x-\epsilon$. Pick
$r<\upsilon$ in $Q^{+}$ such that
$y+\epsilon<r<\upsilon<x-\epsilon$. Then for $k<n,\,\,\,
t_{n}s_{k}\in \varphi(r)$ and $t_{n}s_{k}\notin
\varphi(\upsilon)$. Hence $\varphi\notin\mathcal{W}(S)$, and this
is a contradiction. The last conclusion holds since each
neighborhood of $\widetilde{p}$ includes points $\hat{s}$ with
$f(s)$ arbitrarily close to $0$.

(d) It is obvious.
\end{proof}

\begin{defn}
Define an equivalence relation on $S^{Lmc}$ by
$\widetilde{p}\equiv \widetilde{q}$ if and only if
$\widehat{f}(\widetilde{p})=\widehat{f}(\widetilde{q})$ whenever
$f\in wap(S)$.
\end{defn}

We observe that $\equiv$ is trivially an equivalence relation on
$S^{Lmc}$ and let $[\p]$ be the equivalence class
of $\p\in S^{Lmc}$.
 We shall be interested
in the $z-$filters $\bigcap[\widetilde{p}]$ where $\widetilde{p}\in
S^{Lmc}$. Consequently, we want to know when $A\in
\bigcap[\widetilde{p}]$ in terms of $\widetilde{p}$.

\begin{lem}
Let $\widetilde{p}\in S^{Lmc}$ and let $A\in
Z(Lmc(S))$. The following statements are equivalent.\\
(a) $[\widetilde{p}]\subseteq (\overline{A})^{\circ}$.\\
(b) There exist $f\in wap(S)$ and $\delta>0$ such that $\{s\in
S:\mid f(s)-\widehat{f}(\widetilde{p})\mid\leq\delta\}\subseteq
A$.\\
(c) There exist $f\in wap(S)$ and $\delta>0$ such that
$Range(f)\subseteq [0,1],\widehat{f}(\widetilde{p})=0$, and
$f^{-1}([0,\delta])\subseteq A$.
\end{lem}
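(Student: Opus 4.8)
The plan is to prove the equivalence of (a), (b), (c) by establishing the cycle $(a)\Rightarrow(c)\Rightarrow(b)\Rightarrow(a)$, using Lemma 3.4 as the main engine to pass between $wap$-functions and $W$-nests, and using Lemma 3.6 to connect membership in $\bigcap[\widetilde p]$ with the zero-set / sublevel-set structure. The key observation underlying everything is that $[\widetilde p]\subseteq(\overline A)^\circ$ says precisely that every $z$-ultrafilter converging to a point $\equiv$-equivalent to $\widetilde p$ has $A$ as a member with $\overline A$ a neighborhood, which by the definition of $\equiv$ should be detectable by a single $wap$-function.

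First I would prove $(c)\Rightarrow(b)$, which is essentially trivial: given $f\in wap(S)$ with $Range(f)\subseteq[0,1]$, $\widehat f(\widetilde p)=0$ and $f^{-1}([0,\delta])\subseteq A$, note that $\{s\in S:|f(s)-\widehat f(\widetilde p)|\leq\delta\}=\{s:|f(s)|\leq\delta\}=f^{-1}([0,\delta])\subseteq A$, so the same $f$ and $\delta$ witness (b). Next, $(b)\Rightarrow(a)$: suppose $f\in wap(S)$ and $\delta>0$ with $B:=\{s\in S:|f(s)-\widehat f(\widetilde p)|\leq\delta\}\subseteq A$. Put $a=\widehat f(\widetilde p)\in cl_{\mathbb C}f(S)$. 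If $\widetilde q\equiv\widetilde p$ then $\widehat f(\widetilde q)=\widehat f(\widetilde p)=a$, so by the second assertion of Lemma 3.4(a) (applied to the $W$-nest $\varphi(r)=\{s:|f(s)-a|\leq r\}$, which lies in $\mathcal W(S)$ by Lemma 3.4(a)) we get $\widetilde q\in(\overline{\varphi(r)})^\circ$ for every $r\in\mathbb Q^+$; in particular $\widetilde q\in(\overline{\varphi(\delta)})^\circ=(\overline B)^\circ\subseteq(\overline A)^\circ$. Since $\widetilde q$ was an arbitrary element of $[\widetilde p]$, this gives $[\widetilde p]\subseteq(\overline A)^\circ$, which is (a).

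The remaining implication $(a)\Rightarrow(c)$ is where the work lies, since we must \emph{produce} a $wap$-function from the purely topological hypothesis $[\widetilde p]\subseteq(\overline A)^\circ$. The idea is: because each $\equiv$-class is closed in $S^{Lmc}$ and $(\overline A)^\circ$ is an open set containing $[\widetilde p]$, one would like to separate the compact set $[\widetilde p]$ from the closed set $S^{Lmc}\setminus(\overline A)^\circ$ by a continuous function on $S^{Lmc}$ that is moreover constant on $\equiv$-classes — equivalently, by (the extension of) a $wap$-function. Concretely, I would build a topological $W$-nest $\varphi\in\mathcal W(S)$ with $[\widetilde p]\subseteq\bigcap_{r\in\mathbb Q^+}\overline{\varphi(r)}$ and $\varphi(1)\subseteq A$ (using normality of $S^{Lmc}$ to interpolate zero-sets $\overline{\varphi(r)}$ between $[\widetilde p]$ and $\overline A$ along the rationals, then pulling back to $S$ via $\varepsilon$); the $W$-nest condition (4) will need to be checked, and this is the step I expect to be the main obstacle, because it is the only place the $wap$ (double-limit) structure is genuinely used rather than just normality. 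Condition (4) should follow from the fact that $\overline{\varphi(r)}$ is the closure of $\varepsilon$ of a zero-set together with the characterization of $wap$ via iterated limits: if (4) failed we would get sequences producing unequal iterated limits for the function associated to $\varphi$, contradicting that that function is forced to be in $wap(S)$. Then Lemma 3.4(c) gives $f(s)=\inf\{r\in\mathbb Q^+:s\in\varphi(r)\}\in wap(S)$ with $\widehat f(\widetilde p)=0$ (since $\widetilde p\in\bigcap_r\overline{\varphi(r)}$), $Range(f)\subseteq[0,1]$ after rescaling, and $f^{-1}([0,\delta])\subseteq\varphi(1)\subseteq A$ for suitable $\delta<1$; that is exactly (c). A cleaner alternative for $(a)\Rightarrow(c)$, which I would try first, is to invoke Lemma 3.4(a) in the reverse direction: pick $f\in wap(S)$ already known to vanish at $\widetilde p$ and separate $[\widetilde p]$ from the complement of $(\overline A)^\circ$ using the family of functions $\max\{f_1,f_2\}$ (closed under max in $wap(S)$ by Lemma 2.3(i)) coming from a finite subcover of the compact set $[\widetilde p]$ — here one would use that $S^{wap}$ is compact Hausdorff, so points of $[\widetilde p]$ are separated in $S^{wap}$ from the image of $S^{Lmc}\setminus(\overline A)^\circ$, lift the separating function through $\varepsilon$, and take its restriction, arriving at (c) after normalizing.
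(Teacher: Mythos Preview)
Your implications $(c)\Rightarrow(b)$ and $(b)\Rightarrow(a)$ are correct and match the paper's (your $(b)\Rightarrow(a)$ via Lemma~3.4(a) is just a more explicit version of the paper's one-line argument).

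The gap is in $(a)\Rightarrow(c)$. In your first approach you interpolate zero-sets $\varphi(r)$ between $[\widetilde p]$ and $(\overline A)^\circ$ by normality of $S^{Lmc}$ and then try to verify condition~(4) of Definition~3.3. Your justification for (4) is circular: you say that if (4) failed, the associated function $f(s)=\inf\{r:s\in\varphi(r)\}$ would have unequal iterated limits, ``contradicting that that function is forced to be in $wap(S)$.'' But nothing forces $f\in wap(S)$ --- Lemma~3.4(c) derives $f\in wap(S)$ \emph{from} condition~(4), not the other way around. A nest built only from normality uses arbitrary $Lmc$-zero-sets, and there is no reason those should satisfy the double-limit condition. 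Your second approach (separate $[\widetilde p]$ from $S^{Lmc}\setminus(\overline A)^\circ$ in $S^{wap}$ and pull back) has the right spirit but is too vague at the last step: a continuous $g$ on $S^{wap}$ with $g=1$ on the image of $S^{Lmc}\setminus(\overline A)^\circ$ gives only $f^{-1}([0,\delta])\subseteq\varepsilon^{-1}((\overline A)^\circ)$, and you have not explained why this is contained in $A$.

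The paper avoids both problems by arguing by contradiction. Assume (c) fails and set $\mathcal G=\{f\in wap(S):\mathrm{Range}(f)\subseteq[0,1],\ \widehat f(\widetilde p)=0\}$; then every $B(f,\delta)=f^{-1}([0,\delta])\setminus A$ is nonempty. Lemma~2.3(i) (closure of $wap$ under $\max$) shows $\{B(f,\delta)\}$ has the finite intersection property, so there is $\widetilde r\in\bigcap_{f,\delta}\overline{B(f,\delta)}$. One checks $\widetilde r\notin(\overline A)^\circ$, hence $\widetilde r\notin[\widetilde p]$, so some $g\in wap(S)$ separates $\widetilde r$ from $\widetilde p$; rescaling via Lemma~3.4 produces an $f\in\mathcal G$ with $\widehat f(\widetilde r)>0$, contradicting $\widetilde r\in\overline{B(f,\widehat f(\widetilde r)/2)}$. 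This FIP/compactness maneuver is the missing idea: it lets you use only genuine $wap$-functions (so condition~(4) is automatic via Lemma~3.4(a)) rather than trying to manufacture a $W$-nest from scratch.
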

\begin{proof}
 That (c) implies (b) is trivial. To see that (b)
implies (a), let $\widetilde{q}\in [\widetilde{p}]$. Then
$\widehat{f}(\widetilde{p})=\widehat{f}(\widetilde{q})$ so $B=\{s\in
S:\mid f(s)-\widehat{f}(\widetilde{p})\mid\leq\delta\}\subseteq A$.
Hence $[\widetilde{p}]\subseteq(\overline{A})^{\circ}$.

To see that (a) implies (c), suppose the conclusion fails and let
\begin{equation*}
  \mathcal{G}=\{f\in wap(S): Range(f)\subseteq [0,1]\mbox{ and }
\widehat{f}(\widetilde{p})=0\}.
\end{equation*}
For each $f\in \mathcal{G}$ and
each $\delta>0$, let
$B(f,\delta)=f^{-1}([0,\delta])-\overline{A}$ and let
\begin{equation*}
 \LL=\{B(f,\delta):f\in \mathcal{G} \mbox{ and }\delta>0\}.
 \end{equation*}
Given $f$
and $g$ in $\mathcal{G},\delta>0$ and $\gamma>0$ , let
$\mu=min\{\delta,\gamma\}$ and define h by
$h(s)=max\{f(s),g(s)\}$. Then $Range(h)\subseteq[0,1]$ and by
Lemma 2.3, $h\in wap(S)$. To see that
$\widehat{h}(\widetilde{p})=0$, suppose that $\{s_\alpha\}$ be a net in $S$ such that
$\{s_{\alpha}\}$ converges to $\widetilde{p}$ in $S^{Lmc}$, since $h(s)=\frac{\mid
f(s)-g(s)\mid}{2}+\frac{f(s)-g(s)}{2}$ so
\begin{eqnarray*}
\widehat{h}(\widetilde{p})&=& lim_{\alpha}h(s_{\alpha})\\
&=&lim_{\alpha}\frac{\mid f(s_{\alpha})-g(s_{\alpha})\mid}{2}+\frac{f(s_{\alpha})-g(s_{\alpha})}{2}\\
&=& \frac{\mid
\widehat{f}(\widetilde{p})-\widehat{g}(\widetilde{p})\mid}{2}+
\frac{\widehat{f}(\widetilde{p})-\widehat{g}(\widetilde{p})}{2}\\
&=&0.
\end{eqnarray*}
Thus $h\in \mathcal{G}$ and $B(h,\mu)\subseteq B(f,\delta)\bigcap
B(g,\gamma)$. Therefore $\LL$ has the finite intersection property. Pick $\widetilde{r}\in S^{Lmc}$ such that
$\widetilde{r}\in\bigcap_{f\in\mathcal{G},\,\delta>0}\overline{B(f,\delta)}$.
Then $\widetilde{r}\notin (\overline{A})^{\circ}$ and so
$\widetilde{r}\notin [\widetilde{p}]$. Pick $g\in wap(S)$ such
that $\widehat{g}(\widetilde{p})\neq \widehat{g}(\widetilde{r})$.
Let
\begin{equation*}
 b=sup\{ \mid
\widehat{g}(\widetilde{q})-\widehat{g}(\widetilde{p})\mid
:\widetilde{q}\in S^{Lmc}\},
\end{equation*}
and pick $n\in \mathbb{N}$
such that $n>b$. Define $\varphi$ as in Lemma 3.4 with
$a=\widehat{g}(\widetilde{p})$. Then $\varphi\in
\mathcal{W}(S)$ and $\varphi(n)=S$. Define $\mu(t)$ for $t\in
Q^{+}$ by $\mu(t)=\varphi(nt)$. Then trivially $\mu\in \mathcal{W}(S)$. Define $f$
on $S$ by
\begin{eqnarray*}
f(s)&=& inf\{t\in Q^{+}:s\in \mu(t)\}\\
&=& inf\{ t\in Q^{+}:s\in \varphi(nt)\}\\
&=& inf\{t\in Q^{+}:\mid g(s)-a \mid\leq nt\}\\
&=& inf\{ t\in Q^{+}:\mid \frac{1}{n}g(s)-\frac{1}{n}a \mid\leq
t\}\\
&=& \mid \frac{1}{n}g(s)-\frac{1}{n}a\mid.
\end{eqnarray*}
By Lemma 3.4, $f\in wap(S)$. Since $\mu(1)=S$, $Range(f)\subseteq [0,1]$. Given $t\in Q^{+}$, $\mu(t)=\{ s\in S: \mid
g(s)-\widehat{g}(\widetilde{p})\mid\leq nt\}$ so
$\widetilde{p}\in\overline{\mu(t)}$. Thus
$\widehat{f}(\widetilde{p})=0$. Also
\begin{equation*}
 \widehat{f}(\widetilde{r})=\mid\widehat{g}(\widetilde{r})-\widehat{g}(\widetilde{p})\mid/n>0.
 \end{equation*}
Thus $
\widetilde{r}\notin\overline{B(f,\widehat{f}(\widetilde{r})/2)}$ is
a contradiction.
\end{proof}

\begin{thm}
Let $\Gamma$ be a set of pure $z-$filters in $Lmc(S)$.
Statement (a) below is equivalent to the conjunction of statements
(b),(c) and (d) and implies that $\Gamma$ is a Hausdorff compact
space and $e[S]$ is dens in $\Gamma$.\\
(a) $\Gamma=\{\bigcap [p]:p\in S^{Lmc}\}$.\\
(b) For $\varphi\in \mathcal{W}(S)$ and $\mathcal{L}\in \Gamma$, if
$\mathcal{L}\bigcup Range(\varphi)$ has the finite intersection
property, then $Rang(\varphi)\subseteq \mathcal{L}$.\\
(c) Given distinct $\mathcal{L}$ and $\K$ in $\Gamma$, there exists
$\varphi\in \mathcal{W}(S)$ such that $Range(\varphi)\subseteq
\mathcal{L}$ but $Range(\varphi)\setminus \K\neq \emptyset$.\\
(d) For each topological choice function $f$ for $\Gamma$ , there is
a finite subfamily $\mathcal{F}$ of $\Gamma$ such that
$\overline{S}=S^{Lmc}=\bigcup_{\mathcal{L}\in
\mathcal{F}}(\overline{f(\mathcal{L})})^{\circ}$.
\end{thm}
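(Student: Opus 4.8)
The plan is to prove Theorem 3.7 by exploiting the characterization of quotients of $S^{Lmc}$ already available in Theorems 2.7, 2.8 and 2.10, together with the translation between the relation $\equiv$ and the $W$-nest machinery provided by Lemmas 3.4 and 3.6. The natural strategy is to show that the family $\Gamma = \{\bigcap[\widetilde p] : \widetilde p \in S^{Lmc}\}$ arising from the equivalence relation $\equiv$ is exactly the object described by statement $(c)$ of Theorem 2.7 (equivalence classes closed in $S^{Lmc}$), so that $(a)$ here is equivalent to the conjunction of $(a)$ and $(b)$ of Theorem 2.7; then one reinterprets those two conditions, via Lemma 3.6, as the $W$-nest statements $(b)$ and $(c)$ here, while $(d)$ here is literally statement $(a)$ of Theorem 2.7. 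Finally the ``implies Hausdorff compact with $e[S]$ dense'' clause is read off from Theorem 2.10.

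First I would verify that each equivalence class $[\widetilde p]$ of $\equiv$ is closed in $S^{Lmc}$: since $[\widetilde p] = \bigcap_{f\in wap(S)} \widehat f^{-1}(\{\widehat f(\widetilde p)\})$ and each $\widehat f$ is continuous on $S^{Lmc}$, this is an intersection of closed sets. Hence whenever $(a)$ holds, Theorem 2.7$(c)$ applies with $R = {\equiv}$, giving that $(a)$ is equivalent to Theorem 2.7$(a)\wedge(b)$. Statement $(d)$ above is verbatim Theorem 2.7$(a)$, so it remains to show that Theorem 2.7$(b)$ is equivalent, in the presence of $(d)$, to the conjunction of $(b)$ and $(c)$ above. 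For the forward direction I would take distinct $\mathcal L, \mathcal K \in \Gamma$, use Theorem 2.7$(b)$ to get $B \in \mathcal K^{\circ}$ separating them, then invoke Lemma 3.6 (clause (a)$\Leftrightarrow$(c)) to replace $B$ by a set of the form $f^{-1}([0,\delta])$ with $f\in wap(S)$, $Range(f)\subseteq[0,1]$, $\widehat f(\widetilde q)=0$ for the relevant $\widetilde q$; the associated $\varphi(r) = f^{-1}([0,r])$ built as in Lemma 3.4 is the required $W$-nest, and the finite-intersection/closure properties in Lemma 3.4(a),(c) translate ``$Range(\varphi)\subseteq\mathcal L$'' and ``$Range(\varphi)\setminus\mathcal K\neq\emptyset$'' into the $(\overline{\cdot})^{\circ}$ inclusions of Theorem 2.7$(b)$. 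For the converse, given $(b)$ and $(c)$ I would, for distinct $\mathcal L,\mathcal K$, take the $\varphi$ from $(c)$, form $f(s)=\inf\{r : s\in\varphi(r)\}$ (which lies in $wap(S)$ by Lemma 3.4(c)), and use Lemma 3.4(a),(c) plus Lemma 3.6 again to produce the separating set $B$ demanded by Theorem 2.7$(b)$; condition $(b)$ here is what guarantees that a $W$-nest whose range is finitely compatible with $\mathcal L$ is actually contained in $\mathcal L$, which is precisely the ``maximality/purity'' content hidden in Theorem 2.7$(b)$.

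The main obstacle I anticipate is the bookkeeping around $(\overline{A})^{\circ}$ versus $\overline{A}$ and the precise logical role of condition $(b)$. Lemma 3.6 gives a clean dictionary between ``$[\widetilde p]\subseteq(\overline A)^{\circ}$'' and the existence of a suitable $f\in wap(S)$, but transporting this from a single class $[\widetilde p]$ to the pure $z$-filter $\mathcal L = \bigcap[\widetilde p]$ and to membership statements like $Range(\varphi)\subseteq\mathcal L$ requires care: one must check that $\overline{\mathcal L} = [\widetilde p]$ (so that the closure operations and the $\circ$-interiors match up) and that $\mathcal L^{\circ}$ consists exactly of those $A$ for which Lemma 3.6 applies. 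I would therefore isolate as a preliminary observation the identity $\overline{\bigcap[\widetilde p]} = [\widetilde p]$ and the description of $(\bigcap[\widetilde p])^{\circ}$, and only then run the equivalences; the rest is a careful but routine matching of Theorem 2.7's conditions against Definition 3.3 and Lemmas 3.4, 3.6, with the final sentence of the theorem following immediately from Theorem 2.10 once $\Gamma$ is known to be a quotient of $S^{Lmc}$.
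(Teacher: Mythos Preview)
Your plan has a genuine gap in the converse direction $(b)\wedge(c)\wedge(d)\Rightarrow(a)$. You propose to factor everything through Theorem~2.7 by proving that ``Theorem~2.7(b) is equivalent, in the presence of~(d), to $(b)\wedge(c)$ here'' and that ``(a) is equivalent to Theorem~2.7$(a)\wedge(b)$''. Neither equivalence holds. Theorem~2.7$(c)$ (hence Theorem~2.7$(a)\wedge(b)$) only asserts that $\Gamma$ comes from \emph{some} closed equivalence relation $R$ on $S^{Lmc}$, whereas statement~(a) here demands the \emph{specific} relation~$\equiv$. Concretely, take $\Gamma=\{\widetilde p:\widetilde p\in S^{Lmc}\}$, the identity quotient. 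This $\Gamma$ satisfies both (d) and Theorem~2.7(b) (trivially each $\widetilde p$ contains exactly one member of $\Gamma$), yet whenever $\equiv$ is nontrivial, condition~(c) fails: if $\widetilde p\equiv\widetilde q$ with $\widetilde p\neq\widetilde q$, any $\varphi\in\mathcal W(S)$ with $Range(\varphi)\subseteq\widetilde p$ yields via Lemma~3.4(c) an $f\in wap(S)$ with $\widehat f(\widetilde p)=0$, hence $\widehat f(\widetilde q)=0$, hence $\widetilde q\in\bigcap_r\overline{\varphi(r)}$ and $Range(\varphi)\subseteq\widetilde q$ as well. So your proposed bridge collapses.

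What the paper actually does for $(b)\wedge(c)\wedge(d)\Rightarrow(a)$ cannot be outsourced to Theorem~2.7: it shows directly that if $\mathcal L\in\Gamma$ and $\mathcal L\subseteq\widetilde p$ then $\mathcal L=\bigcap[\widetilde p]$, arguing both inclusions by building $f\in wap(S)$ out of a $W$-nest via Lemma~3.4(c) and using $(b)$ and $(c)$ to force contradictions with $\widehat f(\widetilde p)=\widehat f(\widetilde q)$ or $\widehat f(\widetilde p)\neq\widehat f(\widetilde q)$. Condition~(d) together with Theorem~2.7$(a')$ supplies the existence of such an $\mathcal L$, but the identification of $\mathcal L$ with $\bigcap[\widetilde p]$ is where $(b)$ and $(c)$ carry real content tied to $wap(S)$, not merely to abstract separation. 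Your sketch for $(a)\Rightarrow(b),(c),(d)$ and for the final Hausdorff/compact/density clause via Theorem~2.8 and Theorem~2.10 is on the right track and matches the paper; the defect is solely in the converse, where you must argue with $\equiv$ explicitly rather than with a generic $R$.
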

\begin{proof}
We show first that (a) implies (b),(c) and (d), so
we assume (a) holds. Observe that if
$\widetilde{p},\widetilde{q}\in S^{Lmc}$ and
$\widetilde{p}\notin [\widetilde{q}]$, then for some $f\in
wap(S),\,\,\,\widehat{f}(\widetilde{p})\neq\widehat{f}(\widetilde{q})$.
Since $\widehat{f}$ is continuous, there is a neighborhood of
$\widetilde{p}$ missing $[\widetilde{q}]$, that is, each
$[\widetilde{q}]$ is closed in $S^{Lmc}$. Thus by
Theorem 2.7 condition (d) holds.

To establish (b), let $\varphi\in \mathcal{W}(S)$ and let
$\mathcal{L}\in \Gamma$ and assume $\mathcal{L}\bigcup
Range(\varphi)$ has the finite intersection property. Pick
$\widetilde{p}\in S^{Lmc}$ such that pure $z-$filter
generated by $\mathcal{L}\bigcup Range(\varphi)$ contained in
$\widetilde{p}$. Let $\K=\bigcap[\widetilde{p}]$. Suppose that
$Range(\varphi)- \K\neq \emptyset$,  pick $\widetilde{q}\in
[\widetilde{p}]$ and $r\in Q^{+}$ such that $\widetilde{q}\notin
\overline{\varphi(r)}$. Define $f$ as in Lemma 3.4(c), then
$\widehat{f}(\widetilde{p})=0$ while $\widehat{f}(\widetilde{q})\geq
r$, a contradiction.

 To establish
(c), let $\mathcal{L}$ and $\K$ be distinct members of $\Gamma$
and pick $\widetilde{p}$ and $\widetilde{q}$ in
$S^{Lmc}$ such that
$\mathcal{L}=\bigcap[\widetilde{p}]$ and
$\K=\bigcap[\widetilde{q}]$. Pick $f\in wap(S)$ such that
$\widehat{f}(\widetilde{p})\neq \widehat{f}(\widetilde{q})$ and
let $a=\widehat{f}(\widetilde{p})$. Define $\varphi$ as in Lemma
3.4. Let
$\epsilon=\mid\widehat{f}(\widetilde{p})-\widehat{f}(\widetilde{q})\mid$
and pick $\nu\in Q^{+}$ such that $\nu<\epsilon$. Then
$\widetilde{q}\notin (\overline{\varphi(\nu)})^{\circ}$ so $\varphi(\nu)\notin \K$. Further,
if $\widetilde{r}\in [\widetilde{p}]$, then
$\widehat{f}(\widetilde{r})=a$. Since
$\rr\in(\overline{\varphi(\nu)})^{\circ}$ for every
$\nu\in\mathbb{Q}^{+}$ therefore $\varphi(\nu)\in\rr$, \cite[Lemma
2.8]{Akbari2} , and so $Range(\varphi)\subseteq \widetilde{r}$. Therefore
$Range(\varphi)\subseteq \mathcal{L}$.

Now we assume that (b),(c) and (d) hold and prove (a). Since (d)
holds, by Theorem 2.7 each $\widetilde{p}\in S^{Lmc}$
contains some $\mathcal{L}\in \Gamma$. It suffices, therefore, to
show that if $\widetilde{p}\in S^{Lmc}$ and
$\mathcal{L}\in \Gamma$ with $\mathcal{L}\subseteq \widetilde{p}$,
then $\mathcal{L}=\bigcap[\widetilde{p}]$. To this end , let
$\widetilde{p}\in S^{Lmc}$ and let $\mathcal{L}\in \Gamma$
with $\mathcal{L}\subseteq \widetilde{p}$. Let $\widetilde{q}\in
[\widetilde{p}]$ and suppose $\mathcal{L}\setminus \widetilde{q}\neq
\emptyset$. By condition (d), pick $\K\in \Gamma$ such that $\K\subseteq
q$. Pick $\varphi$ as guaranteed by condition (c). Note that if we
had $Range(\varphi)\subseteq \widetilde{q}$, we would have
$\K\bigcup Range(\varphi)$ having the finite intersection property
and would thus have $Range(\varphi)\subseteq \K$, by condition (b)
and this is a contradiction with (c). Thus we pick $r\in Q^{+}$ such
that $\widetilde{q}\notin \overline{\varphi(r)}$. Define $f$ as in
Lemma 3.4. Then $\widehat{f}(\widetilde{q})\geq r$ while
$\widehat{f}(\widetilde{p})=0,a$ contradiction . Thus
$\mathcal{L}\subseteq \bigcap [\widetilde{p}]$. To see that $\bigcap [\widetilde{p}]\subseteq\mathcal{L}$, let
$\overline{\bigcap [\widetilde{p}]}\subseteq
(\overline{A})^{\circ}$ and suppose that
$\overline{\LL}-(\overline{A})^{\circ}\neq\emptyset$. Pick
$\widetilde{q}\in S^{Lmc}$ such that $\LL\bigcup\{B\in
Z(Lmc(S)):
\overline{S}-(\overline{A})^{\circ}\subseteq(\overline{B})^{\circ}\}\subseteq
\widetilde{q}$ and pick $f\in wap(S)$ such that
$\widehat{f}(\widetilde{p})\neq\widehat{f}(\widetilde{q})$. Let
$a=\widehat{f}(\widetilde{p})$ and define $\varphi$ as in Lemma
3.4(c). Then $Range(\varphi)\subseteq\widetilde{p}$, by Lemma
3.4(a), so $\LL\bigcup Range (\varphi)$ has the finite
intersection property so by condition (b),
$Range(\varphi)\subseteq\LL$. But then
$Range(\varphi)\subseteq\widetilde{q}$ so
$\widehat{f}(\widetilde{p})=\widehat{f}(\widetilde{q})$, a
contradiction.

Now assume that (a) holds. To show that $\Gamma$ is a Hausdorff
compact space and $e[S]$ is dense in $\Gamma$ is suffices, by Theorem 2.10, to show that
$\Gamma$ is a quotient of $S^{Lmc}$, that is, that
conditions (a) and (b) of Theorem 2.8 hold. Since condition (a)
of Theorem 2.8 and condition (d) of this Theorem are identical,
it suffices to establish condition (b) of Theorem 2.8. To this
end, let $\LL$ and $\K$ be distinct members of $\Gamma$ and pick
$\widetilde{p}$ and $\widetilde{q}$ in $S^{Lmc}$ such
that $\LL=\bigcap[\widetilde{p}]$ and
$\K=\bigcap[\widetilde{q}]$. Pick $f\in wap(S)$ such that
$\widehat{f}(\widetilde{p})\neq\widehat{f}(\widetilde{q})$. Let
$\epsilon=\mid\widehat{f}(\widetilde{p})-\widehat{f}(\widetilde{q})\mid$,
let $A=\{s\in S:\mid
f(s)-\widehat{f}(\widetilde{p})\mid\leq\epsilon/3\}$ and $B=\{s\in
S:\mid f(s)-\widehat{f}(\widetilde{q})\mid\leq\epsilon/3\}$. Then $A,B\in Z(Lmc(S))$, $[\widetilde{p}]\subseteq (\overline{A})^{\circ}$ and
$[\widetilde{q}]\subseteq (\overline{B})^{\circ}$ so
$A\in\LL^{\circ}$ and $B\in\K^{\circ}$. Let $\mathcal{C}\in\Gamma$
and pick $r\in S^{Lmc}$ such that $\mathcal{C}=\bigcap[r]$. If $\mid
\widehat{f}(r)-\widehat{f}(\widetilde{p})\mid\leq 2\epsilon/3$,
then $\overline{\mathcal{C}}\subseteq\overline{S-B}\subseteq
\overline{S}-({\overline{B}})^{\circ}\subseteq
(\overline{H})^{\circ}$ implies that $H\in \mathcal{C}$ for each
$H\in Z(Lmc(S))$, and if $\mid
\widehat{f}(r)-\widehat{f}(\widetilde{p})\mid\geq \epsilon/3$ then
$\overline{\mathcal{C}}\subseteq\overline{S-A}\subseteq
\overline{S}-({\overline{A}})^{\circ}\subseteq
(\overline{T})^{\circ}$ implies $T\in \mathcal{C}$ for each $T\in
Z(Lmc(S))$.
\end{proof}

We want to show that
$\Gamma=\{\bigcap[\widetilde{p}]:\widetilde{p}\in
S^{Lmc}\}$ is $S^{wap}$. In order to use Remark 2.9,
we need that for each $\LL$ and $\K$ in $\Gamma$ there exists
$\C\in \Gamma$ with $\C\subseteq\LL+\K$. For such $\C$, we have
$\overline{\LL+\K}\subseteq\overline{\C}$. By \cite[Lemma 2.10]{Akbari2}, we have,
if $\LL=\bigcap[\widetilde{p}]$ and $\K=\bigcap[\widetilde{q}]$,
then $\widetilde{p}*\widetilde{q}\in\overline{\LL+\K}$. Since for
distinct $\C$ and $\D$ in $\Gamma$,
$\overline{\C}\bigcap\overline{\D}=\emptyset$, our only candidate
for $\C$ in $\Gamma$ with $\C\subseteq\LL+\K$ is thus
$\bigcap[\widetilde{p}*\widetilde{q}]$.

\begin{lem}
If for all $\widetilde{p},\widetilde{q}$ and $\widetilde{r}$ in
$S^{Lmc}$, $\widetilde{q}\equiv\widetilde{r}$ implies
$(\widetilde{q}*\widetilde{p})\equiv(\widetilde{r}*\widetilde{p})$,
then for all $\widetilde{p}$ and $\widetilde{q}$ in
$S^{Lmc}$,

\begin{equation*}  \bigcap[\widetilde{q}*\widetilde{p}]
\subseteq\bigcap[\widetilde{q}]+\bigcap[\widetilde{p}].\end{equation*}
\end{lem}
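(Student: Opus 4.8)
The plan is to unravel the definitions of $+$ on $z$-filters and of the relation $\equiv$, and to reduce the containment $\bigcap[\widetilde{q}*\widetilde{p}]\subseteq\bigcap[\widetilde{q}]+\bigcap[\widetilde{p}]$ to a statement about values of weakly almost periodic functions, using Lemma 3.6 as the bridge between membership in $\bigcap[\widetilde{p}]$-type filters and $wap$-functions. Concretely, fix $A\in Z(Lmc(S))$ and suppose $A\in\bigcap[\widetilde{q}*\widetilde{p}]$; equivalently (by Lemma 3.6 applied to $\widetilde{q}*\widetilde{p}$, noting $\widetilde{q}*\widetilde{p}\in S^{Lmc}$) there are $h\in wap(S)$ with $Range(h)\subseteq[0,1]$, $\widehat{h}(\widetilde{q}*\widetilde{p})=0$, and $\delta>0$ with $h^{-1}([0,\delta])\subseteq A$. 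I must show $A\in\bigcap[\widetilde{q}]+\bigcap[\widetilde{p}]$, i.e. for every $F\in Z(Lmc(S))$ with $\Omega_{\bigcap[\widetilde{p}]}(A)\subseteq F$ we have $F\in\bigcap[\widetilde{q}]$, where $\Omega_{\bigcap[\widetilde{p}]}(A)=\{x\in S:\lambda_x^{-1}(A)\in\bigcap[\widetilde{p}]\}$.

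The first real step is to analyze $\Omega_{\bigcap[\widetilde{p}]}(A)$ via the function $h$. For $x\in S$, $\lambda_x^{-1}(A)=\{s:xs\in A\}\supseteq\{s:h(xs)\le\delta\}$. The function $s\mapsto h(xs)=L_x h(s)$ lies in $wap(S)$ (translation invariance of $wap(S)$), and its extension to $S^{Lmc}$ evaluated at $\widetilde{p}$ should be computed in terms of the semigroup operation: one expects $\widehat{L_x h}(\widetilde{p})=\widehat{h}(\varepsilon(x)*\widetilde{p})$, using that $\varepsilon(x)$ is in the topological center and the definition $\mu\nu=\mu\circ T_\nu$. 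So I would first establish that $x\mapsto\widehat{h}(\varepsilon(x)*\widetilde{p})$, call it $k(x)$, is itself a $wap$ function on $S$ (it is $T_{\widetilde p}h$ up to the obvious identification, and $wap(S)$ is $m$-admissible), and that $k^{-1}([0,\delta'])\subseteq\Omega_{\bigcap[\widetilde{p}]}(A)$ for suitable $\delta'$: indeed if $k(x)=\widehat{h}(\varepsilon(x)*\widetilde{p})=0$ then by Lemma 3.6(c) (with $f=L_x h$, since $\widehat{L_x h}(\widetilde p)=0$ and $L_xh^{-1}([0,\delta])=\lambda_x^{-1}(h^{-1}([0,\delta]))\subseteq\lambda_x^{-1}(A)$) we get $\lambda_x^{-1}(A)\in\bigcap[\widetilde{p}]$, so $x\in\Omega_{\bigcap[\widetilde{p}]}(A)$.

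Next, I would use the hypothesis. The assumption is that $\widetilde{r}\equiv\widetilde{r}'$ implies $\widetilde{r}*\widetilde{p}\equiv\widetilde{r}'*\widetilde{p}$, which is exactly what makes right multiplication by (the class of) $\widetilde p$ descend to the quotient $S^{Lmc}/\!\equiv$; combined with continuity of $\widehat{g}$ for $g\in wap(S)$ this should give that $k(x)=\widehat{h}(\varepsilon(x)*\widetilde p)$ depends only on the $\equiv$-class, hence extends continuously to $S^{Lmc}$ with $\widehat{k}(\widetilde q)=\widehat{h}(\widetilde q*\widetilde p)=\widehat{h}(\widetilde q*\widetilde p)$. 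The key point: $\widehat{k}(\widetilde{q})=\widehat{h}(\widetilde{q}*\widetilde{p})=0$ by our starting hypothesis $\widehat{h}(\widetilde q*\widetilde p)=0$. So $k\in wap(S)$ with $Range(k)\subseteq[0,1]$, $\widehat{k}(\widetilde{q})=0$, and $k^{-1}([0,\delta])\subseteq\Omega_{\bigcap[\widetilde{p}]}(A)$. Therefore by Lemma 3.6(c) applied at $\widetilde q$, $\Omega_{\bigcap[\widetilde{p}]}(A)\in\bigcap[\widetilde{q}]$, and hence $[\widetilde q]\subseteq(\overline{\Omega_{\bigcap[\widetilde p]}(A)})^\circ$. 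Now if $\Omega_{\bigcap[\widetilde p]}(A)\subseteq F$ with $F\in Z(Lmc(S))$, then $[\widetilde q]\subseteq(\overline{\Omega})^\circ\subseteq(\overline F)^\circ$, so again by Lemma 3.6 $F\in\bigcap[\widetilde{q}]$. This shows $A\in\bigcap[\widetilde{q}]+\bigcap[\widetilde{p}]$, completing the argument; since $A\in Z(Lmc(S))$ was arbitrary, $\bigcap[\widetilde{q}*\widetilde{p}]\subseteq\bigcap[\widetilde{q}]+\bigcap[\widetilde{p}]$.

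The main obstacle I anticipate is the bookkeeping around the extension of $k(x)=\widehat{h}(\varepsilon(x)*\widetilde p)$ to $S^{Lmc}$ and the verification that $\widehat{k}(\widetilde q)=\widehat h(\widetilde q * \widetilde p)$ — this is where the hypothesis $\widetilde q\equiv\widetilde r\Rightarrow\widetilde q*\widetilde p\equiv\widetilde r*\widetilde p$ is genuinely needed, and one must be careful that $k$ really is the function $s\mapsto \widehat h(\widetilde q * \widetilde p)$ evaluated along a net converging to $\widetilde q$, i.e. that $\lim_\alpha \widehat{h}(\varepsilon(x_\alpha)*\widetilde p)=\widehat h(\widetilde q*\widetilde p)$ when $\varepsilon(x_\alpha)\to\widetilde q$. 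This uses joint-type continuity only in the topological-center variable, which is available since $\lambda_{\mu}$ need not be continuous but $\widehat h$ is continuous and $*$ is separately continuous with the first variable ranging over the topological center; writing $\widetilde q*\widetilde p=\lim_\alpha(\varepsilon(x_\alpha)*\widetilde p)$ directly from the definition of $*$ given in Section 2 handles it. A secondary routine check is that $k^{-1}([0,\delta])\subseteq\Omega_{\bigcap[\widetilde p]}(A)$, which is the chain of inclusions $\lambda_x^{-1}(h^{-1}([0,\delta]))\subseteq\lambda_x^{-1}(A)$ together with Lemma 3.6(c) at $\widetilde p$; these are mechanical once the extension claim is in hand.
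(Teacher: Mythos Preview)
Your argument is essentially correct but differs from the paper's in an instructive way, and you have mislocated where the hypothesis enters.

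First, a minor gap: you claim $k^{-1}([0,\delta'])\subseteq\Omega_{\bigcap[\widetilde p]}(A)$ but only verify it when $k(x)=0$. To get the full inclusion, take $\delta'=\delta/2$: if $k(x)=\widehat{L_xh}(\widetilde p)\le\delta/2$ then $\{s:|L_xh(s)-\widehat{L_xh}(\widetilde p)|\le\delta/2\}\subseteq(L_xh)^{-1}([0,\delta])=\lambda_x^{-1}(h^{-1}([0,\delta]))\subseteq\lambda_x^{-1}(A)$, so Lemma~3.6(b) gives $\lambda_x^{-1}(A)\in\bigcap[\widetilde p]$.

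Second, the identity $\widehat{k}(\widetilde q)=\widehat{h}(\widetilde q*\widetilde p)$ needs no hypothesis whatsoever: it is literally the definition of the product in $S^{Lmc}$, since $k=T_{\widetilde p}h\in Lmc(S)$ and $(\widetilde q*\widetilde p)(h)=\widetilde q(T_{\widetilde p}h)$. What actually requires justification is that $k\in wap(S)$. You can get this either from the $m$-admissibility of $wap(S)$ (which you invoke) or from the hypothesis of the lemma: if $\widetilde q\equiv\widetilde r$ then $\widetilde q*\widetilde p\equiv\widetilde r*\widetilde p$, whence $\widehat k(\widetilde q)=\widehat h(\widetilde q*\widetilde p)=\widehat h(\widetilde r*\widetilde p)=\widehat k(\widetilde r)$, so $\widehat k$ is constant on $\equiv$-classes and $k\in wap(S)$ by Theorem~2.2. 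Notice that by citing $m$-admissibility you have in fact proved the \emph{unconditional} statement, i.e.\ Theorem~3.10, directly.

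By contrast, the paper proceeds pointwise over $[\widetilde q]$: for each $\widetilde r\in[\widetilde q]$ it uses the hypothesis to get $[\widetilde r*\widetilde p]\subseteq(\overline A)^\circ$, applies Lemma~3.6 at $\widetilde r*\widetilde p$ to produce an $f$ \emph{depending on $\widetilde r$}, sets $B=f^{-1}([0,\delta/2])$, and shows $\Omega_{\widetilde p}(B)\subseteq\Omega_{\bigcap[\widetilde p]}(A)$ via the translates $g_s=L_sf$; this yields $F\in\widetilde r$ for every $\widetilde r\in[\widetilde q]$. Your route is more global---one application of Lemma~3.6, one auxiliary $wap$-function $k=T_{\widetilde p}h$---and is cleaner; the paper's route stays closer to the filter calculus and postpones the analytic input (the double-limit property) to Lemma~3.9.
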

\begin{proof}
Let $\widetilde{p}$ and $\widetilde{q}$ be in
$S^{Lmc}$ and let
$[\widetilde{q}*\widetilde{p}]\subseteq (\overline{A})^{\circ}$.
We show that $A\in
\bigcap[\widetilde{q}]+\bigcap[\widetilde{p}]$. To this end, we
let $\widetilde{r}\in [\widetilde{q}]$ and show that for each
$F\in Z(Lmc(S))$,
$\Omega_{\bigcap[\widetilde{p}]}(A)\subseteq F$ implies
$F\in\widetilde{r}$,(i.e. $A\in \cap[\q]$ ). Since
$\widetilde{r}\in [\widetilde{q}]$ we have by assumption that
$(\rr*\p) \equiv(\q*\p)$. Thus $[\rr*\p]\subseteq
(\overline{A})^{\circ}$. Pick, by Lemma 3.6, $f\in wap(S)$ and
$\delta >0$ such that $Range(f)\subseteq [0,1]$,
$\widehat{f}(\rr*\p)=0$ and $f^{-1}([0,\delta])\subseteq A$. Let
$B=f^{-1}([0,\delta/2])$. Then $B\in \rr+\p$ and so for each $F\in
Z(Lmc(S))$, $\Omega_{\p}(B)\subseteq F$ implies that
$F\in\rr$. For $s\in S$, define $g_{s}$ by
$g_{s}(t)=f(st)=f\circ\lambda_{s}(t)$. Then $g_{s}\in wap(S)$ and
for $\gamma >0$,
$g^{-1}_{s}([0,\gamma])=\lambda^{-1}_{s}(f^{-1}([0,\gamma])$. Now
given $s\in \Omega_{\p}(B)$, we have
$g^{-1}_{s}([0,\delta/2])\in\p$ so
$\widehat{g}_{s}(\p)\leq\delta/2$. Thus
\begin{equation*}
  \{t\in S: \mid
g_{s}(t)-\widehat{g}_{s}(\p)\mid\leq\delta/2\}\subseteq
g^{-1}_{s}([0,\delta/2])
\end{equation*}
 so, by Lemma 3.6,
$g^{-1}_{s}([0,\delta/2])\in\bigcap[\p]$. Since
$g^{-1}_{s}([0,\delta/2])\subseteq
(f\circ\lambda_{s})^{-1}([0,\delta])$ we have
\begin{equation*}
  \Omega_{\p}(B)\subseteq\{s\in
S:(f\circ\lambda_{s})^{-1}([0,\delta])\in\bigcap[\p]\}.
\end{equation*}
 Since $f^{-1}([0,\delta])\subseteq A$,
 \begin{equation*}  \Omega_{\p}(B)\subseteq\{s\in
S:\lambda^{-1}_{s}(A)\in\bigcap[\p]\}.
\end{equation*}
So for each $H\in
Z(Lmc(S))$,
\begin{equation*}  \Omega_{\cap[\p]}(A)=\{s\in
S:\lambda^{-1}_{s}(A)\in\bigcap[\p]\}\subseteq H
\end{equation*}
 implies $H\in
\rr$, and this conclude $H\in\bigcap[\rr]$, hence
$A\in\bigcap[\rr]+\bigcap[\p]$. Now we can conclude
\begin{eqnarray*}
\overline{\bigcap[\rr]+\bigcap[\p]}&=&\bigcap_{B\in\bigcap[\rr]+\bigcap[\p]}(\overline{B})\\
&\subseteq&\bigcap_{[\q*\p]\subseteq(\overline{B})^{\circ}}\overline{B}\\
&=&[\q*\p]
\end{eqnarray*}
 and this implies that
\begin{eqnarray*}
\bigcap[\widetilde{q}*\widetilde{p}]
\subseteq\bigcap[\widetilde{q}]+\bigcap[\widetilde{p}].
\end{eqnarray*}
\end{proof}
\begin{lem}
Let $\p,\q\in S^{Lmc}$ and let $A\in Z(Lmc(S))$.
If $[\q*\p]\subseteq (\overline{A})^{\circ}$, then for each $B\in
Z(Lmc(S))$, $\{t\in S:[\q]\subseteq
\overline{r_{t}^{-1}(A)}\}\subseteq B$ implies $B\in \p$.
\end{lem}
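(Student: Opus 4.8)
The plan is to extract a single weakly almost periodic function from the hypothesis, transport it through the right translations and through the point $\q$, and then read off the conclusion from Lemma 3.6.

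First, since $[\q*\p]\subseteq(\overline{A})^{\circ}$, Lemma 3.6 (the implication $(a)\Rightarrow(c)$, applied with $\widetilde{p}=\q*\p$) gives $f\in wap(S)$ and $\delta>0$ with $Range(f)\subseteq[0,1]$, $\widehat{f}(\q*\p)=0$ and $f^{-1}([0,\delta])\subseteq A$. For $t\in S$ put $g_{t}=f\circ r_{t}$. Since $wap(S)$ and $Lmc(S)$ are right translation invariant, $g_{t}\in wap(S)$, $Range(g_{t})\subseteq[0,1]$, and for every $\gamma\le\delta$ we have $g_{t}^{-1}([0,\gamma])=r_{t}^{-1}(f^{-1}([0,\gamma]))\subseteq r_{t}^{-1}(A)$, where $r_{t}^{-1}(A)=Z(h\circ r_{t})\in Z(Lmc(S))$ when $A=Z(h)$.

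The key construction, and the step I expect to be the main obstacle, is to define $\phi\colon S\to\mathbb{R}$ by $\phi(t)=\widehat{g_{t}}(\q)=\q(f\circ r_{t})$ and to show that $\phi\in wap(S)$, $Range(\phi)\subseteq[0,1]$ and $\widehat{\phi}(\p)=\widehat{f}(\q*\p)=0$. For this I would use Theorem 2.2: $f$ extends to $\widehat{f}\in\mathcal{C}(S^{wap})$, $S^{wap}$ is semitopological, and, $S^{Lmc}$ being the universal compactification, there is a continuous homomorphism $\rho\colon S^{Lmc}\to S^{wap}$ with $\rho\circ\varepsilon=\varepsilon$, so that $\widehat{g}=\widehat{g}^{wap}\circ\rho$ on $S^{Lmc}$ for every $g\in wap(S)$. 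Evaluating at the points $\varepsilon(s)$, $s\in S$, and using density one gets $\widehat{g_{t}}^{wap}(v)=\widehat{f}^{wap}(v\varepsilon(t))$, hence $\phi(t)=\widehat{f}^{wap}(\rho(\q)\varepsilon(t))$; since left translation by $\rho(\q)$ is continuous on the semitopological semigroup $S^{wap}$, the map $v\mapsto\widehat{f}^{wap}(\rho(\q)v)$ is a continuous extension of $\phi$ to $S^{wap}$, so $\phi\in wap(S)$ and $\widehat{\phi}(\p)=\widehat{f}^{wap}(\rho(\q)\rho(\p))=\widehat{f}^{wap}(\rho(\q\p))=\widehat{f}(\q*\p)=0$; finally $Range(\phi)\subseteq[0,1]$ because $\q$ is a mean and $g_{t}$ takes values in $[0,1]$. (One can also bypass $S^{wap}$ and obtain $\widehat{\phi}(\p)=\widehat{f}(\q*\p)$ directly from the commutation of iterated net limits of the $wap$ function $f$, cf.\ Theorem 2.1.)

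To finish I compare two sets. If $t\in S$ and $\phi(t)\le\delta/2$, then $\{s\in S:|g_{t}(s)-\widehat{g_{t}}(\q)|\le\delta/2\}\subseteq g_{t}^{-1}([0,\delta])\subseteq r_{t}^{-1}(A)$, so by Lemma 3.6 ($(b)\Rightarrow(a)$, applied to $g_{t}$, $\q$ and $r_{t}^{-1}(A)$) we get $[\q]\subseteq(\overline{r_{t}^{-1}(A)})^{\circ}\subseteq\overline{r_{t}^{-1}(A)}$; hence $\phi^{-1}([0,\delta/2])\subseteq\{t\in S:[\q]\subseteq\overline{r_{t}^{-1}(A)}\}$. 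On the other hand $\phi^{-1}([0,\delta/2])=Z(\max(\phi-\delta/2,0))\in Z(Lmc(S))$, and Lemma 3.6 ($(c)\Rightarrow(a)$, applied to $\phi$, $\p$, $\delta/2$) gives $[\p]\subseteq(\overline{\phi^{-1}([0,\delta/2])})^{\circ}$, so $\phi^{-1}([0,\delta/2])\in\p$ (using $\p\in[\p]$ together with \cite[Lemma 2.8]{Akbari2}, by which a point of $S^{Lmc}$ lying in $(\overline{C})^{\circ}$ forces $C$ into its filter). Now if $B\in Z(Lmc(S))$ satisfies $\{t\in S:[\q]\subseteq\overline{r_{t}^{-1}(A)}\}\subseteq B$, then $\phi^{-1}([0,\delta/2])\subseteq B$, and since $\p$ is a $z$-filter, condition (iii) of Definition 2.4 gives $B\in\p$, as required.
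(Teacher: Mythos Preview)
Your proof is correct, but it follows a genuinely different route from the paper's.

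The paper argues by contradiction and by hand: after extracting $f$ and $\delta$ from Lemma~3.6 it passes to the smaller level set $D=f^{-1}([0,\delta/2])$, assumes the desired filter conclusion fails, and then \emph{builds} sequences $\langle s_n\rangle$, $\langle t_n\rangle$ (choosing $t_n$ from the ``bad'' set $E=\{t:\q\notin\overline{r_t^{-1}(D)}\}$ and $s_n$ from $r_{\p}^{-1}((\overline{B})^\circ)$) so that $f(s_nt_k)\ge\delta/2$ for $k\le n$ and $f(s_nt_k)\le\delta/3$ for $n<k$, contradicting the double-limit criterion of Theorem~2.1. No appeal to $S^{wap}$ is made; only the definition of $wap$ via Theorem~2.1 is used.

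You instead introduce the auxiliary function $\phi(t)=\widehat{g_t}(\q)$ and show $\phi\in wap(S)$ with $\widehat{\phi}(\p)=0$ by invoking Theorem~2.2 (the existence of $S^{wap}$ as a compact \emph{semitopological} semigroup and the continuous homomorphism $\rho:S^{Lmc}\to S^{wap}$); one more pass through Lemma~3.6 then gives the conclusion without any sequence construction. This is shorter and more conceptual, and it is not circular, since the paper has stated Theorem~2.2 as an independent input. The trade-off is that the paper's argument is more ``internal'' in the spirit of Section~3 (it uses only the Grothendieck double-limit criterion and the $z$-filter machinery, never the pre-existing $S^{wap}$), which is consistent with the section's goal of reconstructing $S^{wap}$ from the $Lmc$ side. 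Your approach buys brevity and transparency at the cost of importing the object the section is trying to describe.
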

\begin{proof}
 Let $[\q*\p]\subseteq (\overline{A})^{\circ}$, and
pick by Lemma 3.6, $f\in wap(S)$ and $\delta >0$ such that
$Range(f)\subseteq [0,1]$, $\widehat{f}(\q*\p)=0$ and
$f^{-1}([0,\delta])\subseteq A$. Let $B=f^{-1}([0,\delta/3])$.
Then $B\in\bigcap[\q*\p]\subseteq\q*\p$ so, in particular, for
each $T\in Z(Lmc(S))$, $\Omega_{\p}(B)\subseteq T$
implies $T\in\q$.

Given $t\in S$, define $g_{t}\in wap(S)$ by $g_{t}(s)=f\circ
r_{t}(s)=f(st)$. Then for $\gamma>0$,
$g^{-1}_{t}([0,\gamma])=r^{-1}_{t}(f^{-1}([0,\gamma]))$. Note that
if $g^{-1}_{t}([0,\delta/2])\in\q$, then $\widehat{g}_{t}(\q)\leq
\delta/2$ so that
\begin{equation*}
  \{s\in S:\mid
g_{t}(s)-\widehat{g}_{t}(\q)\mid<\delta/2\}\subseteq
g^{-1}_{t}([0,\delta])
\end{equation*}
 and hence by Lemma 3.6,
$g^{-1}_{t}([0,\delta])\in \bigcap[\q]$. Thus it suffices to show,
with $D=f^{-1}([0,\delta/2])$, that for each $H\in
Z(Lmc(S))$,
\begin{equation*}
 \{t\in S: [\q]\subseteq
\overline{r_{t}^{-1}(D)}\}\subseteq H
\end{equation*}
 implies $H\in\p$.( Because
$\{t\in S: [\q]\subseteq \overline{r_{t}^{-1}(D)}\}\subseteq
\{t\in S: \q\in \overline{r_{t}^{-1}(D)}\}$.) Suppose instead
that there exists $H\in Z(Lmc(S))$ such that $\{t\in S:
\q\in \overline{r_{t}^{-1}(D)}\}\subseteq H$ and $H\notin\p$.
Hence there exists an $z-$ultrafilter $p\subseteq
Z(Lmc(S))$  such that $H\notin p$ and by \cite[Lemma 2.5(10)]{Akbari2}, for each $F\in Z(Lmc(S))$, $H^{c}\subseteq F$
implies that $F\in p$. So for each $F\in Z(Lmc(S))$,
$H^{c}\subseteq\{t\in S:\q\notin
\overline{r_{t}^{-1}(D)}\}\subseteq F$ implies that $F\in p$.

 Let
$t_{1}\in E=\{t\in S:\q\notin \overline{r_{t}^{-1}(D)}\}$.
Inductively pick
\begin{eqnarray*}
s_{n}&\in&
 r _{\p}^{-1}((\overline{B})^{\circ})\cap\bigcap_{k=1}^{n}(S-r^{-1}_{t_{k}}(D))
\end{eqnarray*}
(if $ r
_{\p}^{-1}((\overline{B})^{\circ})\cap\bigcap_{k=1}^{n}(S-r^{-1}_{t_{k}}(D))=\emptyset$
then
\begin{equation*}
 \varepsilon^{-1}( r
_{\p}^{-1}((\overline{B})^{\circ}))\subseteq\bigcup_{k=1}^{n}r^{-1}_{t_{k}}(D)
\end{equation*}
and so $\q\in\overline{\bigcup_{k=1}^{n}r^{-1}_{t_{k}}(D)}=
\bigcup_{k=1}^{n}\overline{r^{-1}_{t_{k}}(D)}$  is a contradiction.) and
\begin{eqnarray*}
 t_{n+1}&\in& E\cap \bigcap_{k=1}^{n}(\overline{\lambda_{s_{k}}^{-1}(B)})^{\circ}.
\end{eqnarray*}
($E\cap
\bigcap_{k=1}^{n}(\overline{\lambda_{s_{k}}^{-1}(B)})^{\circ}\neq\emptyset$,
because $\p$ is a closure point of $E$ and interior point of
$\bigcap_{k=1}^{n}(\overline{\lambda_{s_{k}}^{-1}(B)})^{\circ}$.)

Then, if $k\leq n$ we have $s_{n}t_{k}\notin D$ so that
$f(s_{n}t_{k})\geq\delta/2$. Also, if $n<k$ then $s_{n}t_{k}\in B$
so that $f(s_{n}t_{k})\leq\delta/3$. Thus, thinning the sequence
$<t_{n}>_{n=1}^{\infty}$ and $<s_{n}>_{n=1}^{\infty}$ so that all
limits exist, we have
$lim_{k\rightarrow\infty}lim_{n\rightarrow\infty}f(s_{n}t_{k})\geq\delta/2$
while
$lim_{n\rightarrow\infty}lim_{k\rightarrow\infty}f(s_{n}t_{k})\leq\delta/3$
so that $f\notin wap(S)$.\end{proof}

\begin{thm}
For all $\p$ and $\q$ in $S^{Lmc}$
\begin{equation*}  \bigcap[\widetilde{q}*\widetilde{p}]
\subseteq\bigcap[\widetilde{q}]+\bigcap[\widetilde{p}].\end{equation*}
\end{thm}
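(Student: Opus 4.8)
The plan is to reduce Theorem 3.11 to the hypothesis of Lemma 3.9, namely that $\widetilde{q}\equiv\widetilde{r}$ implies $(\widetilde{q}*\widetilde{p})\equiv(\widetilde{r}*\widetilde{p})$ for all $\widetilde{p},\widetilde{q},\widetilde{r}\in S^{Lmc}$. Once that compatibility is established, Lemma 3.9 gives exactly the asserted inclusion $\bigcap[\widetilde{q}*\widetilde{p}]\subseteq\bigcap[\widetilde{q}]+\bigcap[\widetilde{p}]$, and there is nothing further to do. So the whole content of the proof is: \emph{show $\equiv$ is a right congruence for the operation $*$ on $S^{Lmc}$}.

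To prove right-compatibility, I would argue by contradiction. Suppose $\widetilde{q}\equiv\widetilde{r}$ but $(\widetilde{q}*\widetilde{p})\not\equiv(\widetilde{r}*\widetilde{p})$; then there is $f\in wap(S)$ with $\widehat{f}(\widetilde{q}*\widetilde{p})\neq\widehat{f}(\widetilde{r}*\widetilde{p})$. The key is to translate this into a statement about the zero sets $A=f^{-1}([0,\delta])$-type sets appearing in Lemma 3.6 and to invoke Lemma 3.10. Concretely, after an affine normalization (replacing $f$ by $|f-a|/n$ as in the proof of Lemma 3.6) we may assume $\mathrm{Range}(f)\subseteq[0,1]$ and $\widehat{f}(\widetilde{q}*\widetilde{p})=0$; choose $A=f^{-1}([0,\delta])$ so that $[\widetilde{q}*\widetilde{p}]\subseteq(\overline{A})^{\circ}$. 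By Lemma 3.10, for each $B\in Z(Lmc(S))$, $\{t\in S:[\widetilde{q}]\subseteq\overline{r_t^{-1}(A)}\}\subseteq B$ implies $B\in\widetilde{p}$; in particular the set $E_{\widetilde q}=\{t\in S:[\widetilde q]\subseteq\overline{r_t^{-1}(A)}\}$ "belongs to" $\widetilde p$ in the filter sense (its closure is a neighborhood of $\widetilde p$). The crucial observation is that $E_{\widetilde q}$ depends on $\widetilde q$ only through the equivalence class $[\widetilde q]$: it is literally defined using $[\widetilde q]$, so $E_{\widetilde q}=E_{\widetilde r}$ whenever $\widetilde q\equiv\widetilde r$. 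Hence the same zero-set data that forces $\widehat{f}(\widetilde q*\widetilde p)=0$ also forces $\widehat{f}(\widetilde r*\widetilde p)$ to be small --- more precisely, running the estimate through with $\widetilde r$ in place of $\widetilde q$ and letting $\delta\to 0$ yields $\widehat{f}(\widetilde r*\widetilde p)=0=\widehat{f}(\widetilde q*\widetilde p)$, contradicting the choice of $f$.

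More carefully: fix $f\in wap(S)$ and suppose $\widehat f(\widetilde q*\widetilde p)=c$. For each $\delta>0$ put $A_\delta=\{s\in S:|f(s)-c|\le\delta\}\in Z(Lmc(S))$; since $\widehat f$ is continuous and $\widehat f(\widetilde q*\widetilde p)=c$ we get $[\widetilde q*\widetilde p]\subseteq(\overline{A_\delta})^{\circ}$, and then Lemma 3.10 applies with $A=A_\delta$. Because the set $\{t\in S:[\widetilde q]\subseteq\overline{r_t^{-1}(A_\delta)}\}$ is unchanged when $\widetilde q$ is replaced by any $\widetilde r\equiv\widetilde q$, the conclusion of Lemma 3.10 holds verbatim for $\widetilde r$; tracing back through the meaning of that conclusion (every zero set containing $\{t:[\widetilde r]\subseteq\overline{r_t^{-1}(A_\delta)}\}$ lies in $\widetilde p$, hence $\widetilde p$ is a closure point of that set) shows $\widehat f(\widetilde r*\widetilde p)$ lies within $O(\delta)$ of $c$. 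Letting $\delta\downarrow 0$ gives $\widehat f(\widetilde r*\widetilde p)=c=\widehat f(\widetilde q*\widetilde p)$. Since $f\in wap(S)$ was arbitrary, $(\widetilde q*\widetilde p)\equiv(\widetilde r*\widetilde p)$. With right-compatibility in hand, Lemma 3.9 delivers the theorem.

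The main obstacle I anticipate is the last quantitative step: converting "the set $\{t:[\widetilde r]\subseteq\overline{r_t^{-1}(A_\delta)}\}$ forces membership in $\widetilde p$'' into the clean numerical bound $|\widehat f(\widetilde r*\widetilde p)-c|=O(\delta)$. This requires unwinding the definition of $*$ via nets $\varepsilon(x_\alpha)\to\widetilde r$, $\varepsilon(y_\beta)\to\widetilde p$ together with the description of $\mathbf T_{\varepsilon(y)}f$, much as in the iterated-limit computations in the proofs of Lemmas 3.4 and 3.10; one must be careful that the order of the double limit matches the side on which the translation acts. Everything else is a direct appeal to Lemmas 3.6, 3.9 and 3.10 and the continuity of $\widehat f$.
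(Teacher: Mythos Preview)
Your reduction to right compatibility of $\equiv$ (what the paper numbers Lemma 3.8) and the key observation that $\{t\in S:[\q]\subseteq\overline{r_t^{-1}(A_\delta)}\}$ depends only on the class $[\q]$ are exactly what the paper uses. The gap is in your final step. From the conclusion of Lemma 3.9 you do get that $\p$ is a closure point of $D=\{t:[\rr]\subseteq\overline{r_t^{-1}(A_\delta)}\}$, and for each $t\in D$ right continuity of $r_{\varepsilon(t)}$ gives $|\widehat f(\rr*\varepsilon(t))-c|\le\delta$. But passing from this to $|\widehat f(\rr*\p)-c|\le\delta$ requires continuity of $\lambda_{\rr}:x\mapsto\rr*x$ on $S^{Lmc}$, and $S^{Lmc}$ is only right topological --- that the quotient $\Gamma$ is semitopological is precisely what Theorem 3.11 proves \emph{after} the present theorem, so it is unavailable here. ``Unwinding via nets'' does not bypass this: the interchange of the iterated limit $\lim_\alpha\lim_\beta f(s_\alpha t_\beta)$ you would need is exactly what fails in a merely right topological semigroup.

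The paper does not attempt any such converse of Lemma 3.9. Instead it introduces a \emph{second} zero set $B=\{s:|f(s)-\widehat f(\rr*\p)|\le\epsilon/3\}$ alongside $A=\{s:|f(s)-\widehat f(\q*\p)|\le\epsilon/3\}$, so that $[\rr*\p]\subseteq(\overline B)^\circ$ and in particular $B\in\rr*\p$. Using the $[\q]=[\rr]$ observation on the $A$-side (via Lemma 3.9) together with $B\in\rr*\p$ on the $B$-side, it inductively builds sequences with $s_n t_k\in A$ for $k\le n$ and $s_k t_n\in B$ for $k<n$; after thinning, the two iterated limits of $f(s_kt_n)$ differ by at least $\epsilon/3$, contradicting $f\in wap(S)$ via Theorem 2.1. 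Thus the $wap$ hypothesis enters combinatorially through the double-limit criterion rather than through a limiting estimate, and the second set $B$ --- absent from your sketch --- is essential.
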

\begin{proof} Let $\p,\q$ and $\rr$ be in $S^{Lmc}$
and assume $\q \equiv \rr$. By Lemma 3.8 it suffices to show that
$(\q*\p)\equiv(\rr*\p)$. Suppose instead we have $f\in wap(S)$
such that $\widehat{f}(\q*\p)\neq\widehat{f}(\rr*\p)$ and let
$\epsilon =\mid\widehat{f}(\q*\p)-\widehat{f}(\rr*\p)\mid$. Let
\begin{equation*}  A=\{s\in S:\mid{f}(s)-\widehat{f}(\q*\p)\mid\leq\epsilon/3\}\end{equation*}
and let
\begin{equation*}  B=\{s\in
S:\mid{f}(s)-\widehat{f}(\rr*\p)\mid\leq\epsilon/3\}.\end{equation*}
By Lemma
3.6, $[\q*\p]\subseteq(\overline{A})^{\circ}$ and
$[\rr*\p]\subseteq(\overline{B})^{\circ}$, in particular
$B\in\rr*\p$. By Lemma 3.9, for each $T\in Z(Lmc(S))$,
\begin{equation*}  \{t\in S: [\q]\subseteq\overline{r_{t}^{-1}(A)}\}\subseteq T\end{equation*}
implies that $T\in\p$. Since $[\q]=[\rr]$, we have for each $T\in
Z(Lmc(S))$,
\begin{equation*}  \{t\in S:
\rr\in\overline{r_{t}^{-1}(A)}\}\subseteq T\end{equation*}
implies that
$T\in\p$. Let $C=\varepsilon^{-1}( r
_{\p}^{-1}((\overline{B})^{\circ}))\subseteq \Omega_{\p}(B)$ (see
\cite[Lemma 2.7]{Akbari2},) and let $D=\{t\in S:
\rr\in\overline{r_{t}^{-1}(A)}\}$. Let $t_{1}\in D$. Inductively
let $s_{n}\in C\cap(\bigcap_{k=1}^{n}r_{t_{k}}^{-1}(A))$ and let
$t_{n+1}\in
D\cap(\bigcap_{k=1}^{n}(\overline{\lambda_{s_{k}}^{-1}(B)})^{\circ})$.
Thinning the sequence $<t_{n}>_{n=1}^{\infty}$ and
$<s_{n}>_{n=1}^{\infty}$ we get

\begin{eqnarray*}
\mid lim_{n\rightarrow\infty}lim_{k\rightarrow\infty}f(s_{k}t_{n})
 -\widehat{f}(\q*\p)\mid\leq\epsilon/3
 \end{eqnarray*}
and
\begin{eqnarray*}
\mid lim_{k\rightarrow\infty}lim_{n\rightarrow\infty}f(s_{k}t_{n})
 -\widehat{f}(\rr*\p)\mid\leq\epsilon/3,
 \end{eqnarray*}
 a contradiction.\end{proof}

 \begin{thm}
Let $\Gamma =\{\bigcap[\p]:\p\in S^{Lmc}\}$. Then $\Gamma$
is a semitopological semigroup.
 \end{thm}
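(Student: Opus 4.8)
The plan is to endow $\Gamma$ with its natural right topological operation and then to recover the remaining left continuity by transferring the relevant computation to $S^{wap}$, which is semitopological by construction. Weak almost periodicity enters essentially: the operation on $S^{Lmc}$ is only right continuous, and what repairs this on $\Gamma$ is the extendability of $wap(S)$ to $\Gamma$ together with the universal morphism $S^{Lmc}\to S^{wap}$. First I would set up the right topological part. By Theorem 3.7 (whose condition $(a)$ is the defining property of this $\Gamma$) the space $\Gamma$ is Hausdorff and compact and is a quotient of $S^{Lmc}$; write $\gamma\colon S^{Lmc}\to\Gamma$ for the quotient map, so that $\gamma(\widetilde p)=\bigcap[\widetilde p]$, $\,e=\gamma\circ\varepsilon$ (i.e.\ $e(s)=\gamma(\widehat s)$), and $e[S]$ is dense in $\Gamma$ (Remark 2.9(i)). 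By Theorem 3.10, $\bigcap[\widetilde q*\widetilde p]\subseteq\bigcap[\widetilde q]+\bigcap[\widetilde p]$ for all $\widetilde p,\widetilde q\in S^{Lmc}$; since $\overline{\bigcap[\widetilde r]}=[\widetilde r]$ and $\widetilde q*\widetilde p\in\overline{\bigcap[\widetilde q]+\bigcap[\widetilde p]}$ by \cite[Lemma 2.10]{Akbari2}, passing to closures gives $\bigcap[\widetilde q*\widetilde p]\subseteq\bigcap[\widetilde q]\odot\bigcap[\widetilde p]$. Hence the hypothesis of Remark 2.9(ii) holds, so, with $\LL\dot{+}\K$ the unique member of $\Gamma$ contained in $\LL\odot\K$, the pair $(\Gamma,\dot{+})$ is a compact right topological semigroup, $e$ is a continuous homomorphism with dense image, $\lambda_{e(s)}$ is continuous for each $s\in S$, and (by this uniqueness) $\LL\dot{+}\K=\gamma(\widetilde p*\widetilde q)$ whenever $\gamma(\widetilde p)=\LL$ and $\gamma(\widetilde q)=\K$.

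Next I would assemble two facts about $wap(S)$. For $f\in wap(S)$ the function $\widehat f\in\mathcal{C}(S^{Lmc})$ is, by the definition of $\equiv$, constant on each class $[\widetilde p]=\overline{\bigcap[\widetilde p]}$; hence by Lemma 3.1 ($(d)\Rightarrow(a)$, applied with $T=cl_{\mathbb{C}}f(S)$) the function $f$ has a continuous extension $g_f\colon\Gamma\to\mathbb{C}$ with $g_f\circ\gamma=\widehat f$ and $g_f\circ e=f$. Since $e[S]$ is dense, $f\mapsto g_f$ is a unital $*$-homomorphism, so $\{g_f:f\in wap(S)\}$ is a unital $*$-subalgebra of $\mathcal{C}(\Gamma)$; it separates the points of $\Gamma$, because distinct $\bigcap[\widetilde p],\bigcap[\widetilde q]\in\Gamma$ force $\widetilde p\not\equiv\widetilde q$ and hence $\widehat f(\widetilde p)\neq\widehat f(\widetilde q)$ for some $f\in wap(S)$; therefore $\{g_f:f\in wap(S)\}$ is uniformly dense in $\mathcal{C}(\Gamma)$ by Stone--Weierstrass. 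Secondly, writing $\varepsilon^{wap}\colon S\to S^{wap}$ for the canonical map (Theorem 2.2) and noting that $S^{wap}$ is semitopological, the pair $(S^{wap},\varepsilon^{wap})$ is a semigroup compactification of $S$, so by the universal property of the universal compactification $(S^{Lmc},\varepsilon)$ there is a continuous homomorphism $\rho\colon S^{Lmc}\to S^{wap}$ with $\rho\circ\varepsilon=\varepsilon^{wap}$; consequently, for $f\in wap(S)$ the continuous functions $\widehat f$ and $\widehat f^{wap}\circ\rho$ on $S^{Lmc}$ agree on the dense set $\varepsilon[S]$, so $\widehat f=\widehat f^{wap}\circ\rho$, where $\widehat f^{wap}\in\mathcal{C}(S^{wap})$ is the extension of $f$ given by Theorem 2.2.

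It then remains to prove that $\lambda_{\LL}\colon\K\mapsto\LL\dot{+}\K$ is continuous for every $\LL=\gamma(\widetilde p)\in\Gamma$. Because the $g_f$ are dense in $\mathcal{C}(\Gamma)$, it suffices to show each $g_f\circ\lambda_{\LL}$ is continuous on $\Gamma$; and because $\gamma$ is a quotient map, this reduces to continuity on $S^{Lmc}$ of $\widetilde q\mapsto g_f\bigl(\LL\dot{+}\gamma(\widetilde q)\bigr)$. Since $\LL\dot{+}\gamma(\widetilde q)=\gamma(\widetilde p*\widetilde q)$ and $g_f\circ\gamma=\widehat f$, this map equals $\widetilde q\mapsto\widehat f(\widetilde p*\widetilde q)=\widehat f^{wap}\bigl(\rho(\widetilde p*\widetilde q)\bigr)=\widehat f^{wap}\bigl(\rho(\widetilde p)\,\rho(\widetilde q)\bigr)$, which is the composite of the continuous maps $\widetilde q\mapsto\rho(\widetilde q)$ from $S^{Lmc}$ to $S^{wap}$, the map $\nu\mapsto\rho(\widetilde p)\,\nu$ on $S^{wap}$ (continuous because $S^{wap}$ is semitopological), and $\widehat f^{wap}$; hence it is continuous. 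Thus every $\lambda_{\LL}$ is continuous, and together with the right topological structure from the first step this shows $\dot{+}$ is separately continuous, i.e.\ $\Gamma$ is a semitopological semigroup. The main obstacle is exactly this last step: left continuity genuinely fails on $S^{Lmc}$ in general, and it survives on $\Gamma$ only because, for $f\in wap(S)$, the value $\widehat f(\widetilde p*\widetilde q)$ depends on $\widetilde q$ solely through $\rho(\widetilde q)$, so the computation can be carried out inside $S^{wap}$ where left translations are continuous; the reduction to the scalar maps $g_f\circ\lambda_{\LL}$, which makes that observation usable, rests on the density of the extended $wap(S)$-functions in $\mathcal{C}(\Gamma)$ established above.
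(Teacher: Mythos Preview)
Your proof is correct, but it follows a genuinely different strategy from the paper's.

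The paper argues directly and internally: assuming $\lambda_{\LL}$ fails to be continuous at some $\bigcap[\widetilde q]$, it picks $A$ with $[\widetilde p*\widetilde q]\subseteq(\overline A)^{\circ}$ whose preimage is not a neighborhood, extracts via Lemma~3.6 a function $f\in wap(S)$ with $\widehat f(\widetilde p*\widetilde q)=0$ and $f^{-1}([0,\delta])\subseteq A$, and then inductively builds sequences $\langle s_n\rangle$, $\langle t_n\rangle$ in $S$ for which the two iterated limits of $f(s_kt_n)$ differ, contradicting the double--limit criterion of Theorem~2.1. No appeal to an already--existing $S^{wap}$ is made; the argument lives entirely in the $z$--filter picture and the combinatorial characterization of weak almost periodicity.

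Your route is softer: after setting up $(\Gamma,\dot{+})$ as a right topological quotient via Remark~2.9 and Theorem~3.10, you observe that each $f\in wap(S)$ descends to $g_f\in\mathcal C(\Gamma)$ (Lemma~3.1), that these $g_f$ are dense in $\mathcal C(\Gamma)$ by Stone--Weierstrass, and then you factor $\widehat f(\widetilde p*\widetilde q)=\widehat f^{\,wap}(\rho(\widetilde p)\rho(\widetilde q))$ through the canonical homomorphism $\rho\colon S^{Lmc}\to S^{wap}$ to import left continuity from the semitopological semigroup $S^{wap}$ supplied by Theorem~2.2. This is logically impeccable, since Theorem~2.2 is quoted as a known result in the preliminaries and nothing in Section~3 feeds back into it. What you gain is brevity and conceptual clarity. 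What the paper's argument gains is self--containment: the whole point of Section~3 is to realize $S^{wap}$ \emph{internally} as a space of pure $z$--filters, and the paper therefore deliberately avoids invoking the abstract $S^{wap}$ until after Theorem~3.12 identifies $\Gamma$ with it; your proof, by contrast, uses $S^{wap}$ as a black box to establish a property of $\Gamma$ one step before that identification is made.
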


 \begin{proof} Let $\p\in S^{Lmc}$ and let
 $\LL=\bigcap[\p]$. We need only show $\lambda_{\LL}$ is continuous.
 Suppose not and pick $\q\in S^{Lmc}$ such that
 $\lambda_{\LL}$ is not continuous at $\bigcap[\q]$, and pick
 $A\in\bigcap[\p*\q]$ such that $[\p*\q]\subseteq
 (\overline{A})^{\circ}$ and $\lambda_{\LL}^{-1}(\gamma
 ((\overline{A})^{\circ}))$ contains no neighborhood of $\bigcap
 [\q]$. Pick $\delta >0$ and $f\in wap(S)$ such that
  $Range(f)\subseteq [0,1]$, $\widehat{f}(\p*\q)=0$ and $f^{-1}([0,\delta])\subseteq
  A$.

  We show first that for all $D\in (\bigcap[\q])^{\circ}$ there
  is some $t\in D$ such that
  \begin{equation*}  \p\in\overline{\{s\in S: f(st)\geq\delta/2\}}.\end{equation*}
  Suppose instead we have $D\in(\bigcap[\q])^{\circ}$ such that for
  all $t\in D$,
  \begin{equation*}  \p\notin\overline{\{s\in S: f(st)\geq\delta/2\}}\end{equation*}
  and so by \cite[Lemma 2.5(12)]{Akbari2}, for
  all $t\in D$ and for every $F\in Z(Lmc(S))$,
  \begin{equation*}  \{s\in S: f(st)<\delta/2\}\subseteq F\end{equation*}
   implies
  that $F\in\p$. Since $\gamma((\overline{D})^{\circ})$ is not
  contained in $\lambda_{\LL}^{-1}(\gamma((\overline{A})^{\circ}))$,
  pick $\rr\in S^{Lmc}$ such that $D\in
  (\bigcap[\rr])^{\circ}$ and $A\notin \bigcap[\p*\rr]$. Now if $\widehat{f}(\p*\rr)<\delta$, we would have, with
  $\mu=\delta-\widehat{f}(\p*\rr)$,
  \begin{equation*}  \{s\in S:\mid f(s)-\widehat{f}(\p*\rr)\mid<\mu\}\subseteq A\end{equation*}
   and
  hence $A\in\bigcap[\p*\rr]$ by Lemma 3.6. Thus
  $\widehat{f}(\p*\rr)\geq\delta$. Thus
  \begin{equation*}  \{s\in S: f(st)\geq
  2\delta/3\}\in\p*\rr.\end{equation*}
  Let $B=\{s\in S: f(st)\geq
  2\delta/3\}$ and let
  \begin{equation*}  C=\varepsilon^{-1}( r _{\widetilde{r}}^{-1}
  ((\overline{B})^{\circ}))\subseteq
  \Omega_{\rr}(B)=\{s\in S:
  \lambda_{s}^{-1}(B)\in\rr\}.\end{equation*}
   Then for each $F\in
  Z(Lmc(S))$, $C\subseteq F$ implies that $F\in\p$. Let $s_{1}\in
  C$, and, inductively, let
  \begin{equation*}  t_{n}\in D\cap (
  \bigcap_{k=1}^{n}(\lambda_{s_{k}}^{-1}(B))),\end{equation*}
   and let
  \begin{equation*}  s_{n+1}\in C\cap (\bigcap_{k=1}^{n}\{s\in
  S:f(st_{k})<\delta/2\}).\end{equation*}
  Then, after thinning we have
  \begin{equation*}  lim_{n\rightarrow\infty}lim_{k\rightarrow\infty} f(s_{k}t_{n})\leq
  \delta/2\end{equation*}
   while
  \begin{equation*}  lim_{k\rightarrow\infty}lim_{n\rightarrow\infty} f(s_{k}t_{n})\geq
  2\delta/3.\end{equation*} This contradiction establishes the claim.

  Let $E=f^{-1}([0,\delta/3])$ and let
  $F=\varepsilon^{-1}( r _{\cap[\q]}^{-1}((\overline{E})^{\circ}))$. Then
  $E\in\cap[\p*\q]$ so, by Theorem 3.10, for each $T\in
  Z(Lmc(S))$, $F\subseteq T$ implies that $T\in\cap[\p]$ and
  hence $T\in \p$. Pick $s_{1}\in F$. Inductively,
  $\cap_{k=1}^{n}\lambda_{s_{k}}^{-1}(E)\in (\cap[\q])^{\circ}$ so pick
  $t_{n}\in\cap_{k=1}^{n}\lambda_{s_{k}}^{-1}(E)$ such that $\p\in\overline{\{s\in
  S:f(st_{n})\geq\delta/2\}}$. Pick
  \begin{equation*}  s_{n+1}\in F\cap\bigcap_{k=1}^{n}\{s\in  S:f(st_{k})\geq\delta/2\}.\end{equation*}
   Again, after thinning we obtain
  \begin{equation*}  lim_{n\rightarrow\infty}lim_{k\rightarrow\infty} f(s_{k}t_{n})\geq
  \delta/2\end{equation*}
    while
  $lim_{k\rightarrow\infty}lim_{n\rightarrow\infty} f(s_{k}t_{n})\leq
  \delta/3.$ \end{proof}

  The next theorem says that $\{\cap[\p]:\p\in S^{Lmc}\}$
  is $S^{wap}$.

  \begin{thm}
  Let $\Gamma=\{\cap[\p]:\p\in S^{Lmc}\}$. Then\\
  (1) $e$ is a continuous homomorphism from $S$ to $\Gamma$,\\
  (2) $\Gamma$ is a compact Hausdorff semitopological semigroup,\\
  (3) $e(S)$ is dense in $\Gamma$, and\\
  (4) if $(T,\varphi)$ satisfies (1) and (2), with $T$ replacing
  $\Gamma$ and $\varphi$ replacing $e$, there is a continuous
  homomorphism $\eta :\Gamma\rightarrow T$ such that $\eta\circ
  e=\varphi$.
  \end{thm}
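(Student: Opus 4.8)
The plan is to read parts (1)--(3) off the apparatus already assembled and to put the real work into the universal property (4). For (1)--(3): since $\Gamma=\{\bigcap[\widetilde{p}]:\widetilde{p}\in S^{Lmc}\}$, Theorem 3.7 tells us that $\Gamma$ with the quotient topology is a compact Hausdorff space with $e[S]$ dense, and its proof also shows that $\Gamma$ is a quotient of $S^{Lmc}$. To obtain the operation on $\Gamma$ from Remark 2.9(ii) I first verify its hypothesis: given $\mathcal{A}=\bigcap[\widetilde{q}]$ and $\mathcal{B}=\bigcap[\widetilde{p}]$ in $\Gamma$, Theorem 3.10 gives $\bigcap[\widetilde{q}*\widetilde{p}]\subseteq\mathcal{A}+\mathcal{B}$, and since each class $[\widetilde{r}]$ is closed in $S^{Lmc}$ (as established inside the proof of Theorem 3.7) the $z$-filter $\bigcap[\widetilde{q}*\widetilde{p}]$ is pure, hence is contained in $\widetilde{r}$ for every $\widetilde{r}\in\overline{\mathcal{A}+\mathcal{B}}$, i.e.\ $\bigcap[\widetilde{q}*\widetilde{p}]\subseteq\mathcal{A}\odot\mathcal{B}$. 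Thus Remark 2.9(ii) applies and yields at once that $e$ is a continuous homomorphism, that $\Gamma$ is a right topological semigroup with $\bigcap[\widetilde{q}]\dot{+}\bigcap[\widetilde{p}]=\bigcap[\widetilde{q}*\widetilde{p}]$, that $e[S]$ is dense, and that $\lambda_{e(s)}$ is continuous for each $s\in S$; Theorem 3.11 then upgrades the last assertion to continuity of $\lambda_{\mathcal{L}}$ for every $\mathcal{L}\in\Gamma$. Together these give (1), (2) and (3).

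For (4), let $(T,\varphi)$ satisfy (1) and (2). Replacing $T$ by $\overline{\varphi(S)}$ (a closed, hence compact, subsemigroup of the semitopological semigroup $T$, still Hausdorff and semitopological) I may assume $\varphi[S]$ is dense in $T$, and it then suffices to produce $\eta$ into this smaller $T$. Since $T$ is a compact Hausdorff semitopological semigroup, $wap(T)=\mathcal{CB}(T)$ by Lemma 2.3(ii), and for every real-valued $g\in\mathcal{C}(T)$ the double-limit criterion of Theorem 2.1, combined with the fact that $\varphi$ is a homomorphism (so $g(\varphi(s_{m}t_{n}))=g(\varphi(s_{m})\varphi(t_{n}))$ for all sequences), shows $g\circ\varphi\in wap(S)\subseteq Lmc(S)$. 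Hence $(T,\varphi)$ is a semigroup compactification of $S$, so by the universal property of $(S^{Lmc},\varepsilon)$ there is a continuous homomorphism $\widetilde{\varphi}:S^{Lmc}\to T$ with $\widetilde{\varphi}\circ\varepsilon=\varphi$; moreover, for each $g\in\mathcal{C}(T)$ the continuous functions $g\circ\widetilde{\varphi}$ and $\widehat{g\circ\varphi}$ agree on the dense set $\varepsilon(S)$, hence on all of $S^{Lmc}$.

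The heart of the proof is to show that $\widetilde{\varphi}$ is constant on every fibre of the quotient map $\gamma:S^{Lmc}\to\Gamma$, $\gamma(\widetilde{p})=\bigcap[\widetilde{p}]$. Granting this, $\gamma$ being a quotient map forces a continuous factorization $\widetilde{\varphi}=\eta\circ\gamma$ with $\eta:\Gamma\to T$, whence $\eta\circ e=\eta\circ\gamma\circ\varepsilon=\widetilde{\varphi}\circ\varepsilon=\varphi$; and $\eta$ is a homomorphism because $\bigcap[\widetilde{p}]\dot{+}\bigcap[\widetilde{q}]=\bigcap[\widetilde{p}*\widetilde{q}]=\gamma(\widetilde{p}*\widetilde{q})$ (by the first paragraph) while $\widetilde{\varphi}$ is a homomorphism. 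To prove the fibre-constancy, suppose $\bigcap[\widetilde{p}]=\bigcap[\widetilde{q}]$ but $\widetilde{\varphi}(\widetilde{p})\neq\widetilde{\varphi}(\widetilde{q})$; choose a real-valued $g\in\mathcal{C}(T)$ separating these points and set $h=g\circ\varphi\in wap(S)$, so $\widehat{h}(\widetilde{p})\neq\widehat{h}(\widetilde{q})$. Put $\delta=|\widehat{h}(\widetilde{p})-\widehat{h}(\widetilde{q})|$ and $A=\{s\in S:|h(s)-\widehat{h}(\widetilde{p})|\le\delta/3\}$, which lies in $Z(Lmc(S))$ by Lemma 3.4. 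Then Lemma 3.6 (with $f=h$) gives $[\widetilde{p}]\subseteq(\overline{A})^{\circ}$, so $A\in\widetilde{r}$ for each $\widetilde{r}\in[\widetilde{p}]$ (by \cite[Lemma 2.8]{Akbari2}), i.e.\ $A\in\bigcap[\widetilde{p}]=\bigcap[\widetilde{q}]$; in particular $A\in\widetilde{q}$, hence $\widetilde{q}\in\overline{A}\subseteq\{x\in S^{Lmc}:|\widehat{h}(x)-\widehat{h}(\widetilde{p})|\le\delta/3\}$ by continuity of $\widehat{h}$, contradicting $|\widehat{h}(\widetilde{q})-\widehat{h}(\widetilde{p})|=\delta$.

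I expect this fibre-constancy step to be the main obstacle, since it is exactly where the $W$-nest technology of Lemmas 3.4 and 3.6 becomes indispensable and where one must keep careful track of the $z$-filter/closure dictionary ($[\widetilde{p}]\subseteq(\overline{A})^{\circ}$ really placing $A$ in $\bigcap[\widetilde{p}]$, and $A\in\widetilde{q}$ really placing $\widetilde{q}$ in $\overline{A}$). A secondary point of care is that Lemma 3.2 cannot simply be quoted for (4): its standing hypothesis $\{\widehat{s}:s\in S\}\subseteq\Gamma$ may fail for $\Gamma=\{\bigcap[\widetilde{p}]:\widetilde{p}\in S^{Lmc}\}$ (for instance when $wap(S)$ is too small to separate $\widehat{s}$ from the other points of $S^{Lmc}$), which is why the construction of $\eta$ is routed through $S^{Lmc}$ instead.
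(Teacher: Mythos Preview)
Your argument is correct and follows essentially the same architecture as the paper: parts (1)--(3) are assembled from Theorem~3.7, Theorem~3.10, Remark~2.9 and Theorem~3.11 exactly as the paper does, and for (4) both you and the paper extend $\varphi$ to a continuous homomorphism $\widetilde{\varphi}:S^{Lmc}\to T$, check that $\widetilde{\varphi}$ is constant on the fibres of $\gamma$, and then factor through $\gamma$.

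The one place you take a noticeably longer path is the fibre-constancy step. Having produced $h=g\circ\varphi\in wap(S)$ with $\widehat{h}(\widetilde{p})\neq\widehat{h}(\widetilde{q})$, the paper simply observes that this \emph{is} the definition of $\widetilde{p}\not\equiv\widetilde{q}$ (Definition~3.5), so $[\widetilde{p}]\neq[\widetilde{q}]$ and hence $\gamma(\widetilde{p})\neq\gamma(\widetilde{q})$; no appeal to Lemma~3.6 or to the $z$-filter/closure dictionary is needed. Your detour through $A=\{s:|h(s)-\widehat{h}(\widetilde{p})|\le\delta/3\}$ is valid, but it is really reproving one direction of the identification of $\gamma$-fibres with $\equiv$-classes that is already available. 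Relatedly, your closing caution about Lemma~3.2 is misplaced: the paper invokes Lemma~3.1, whose equivalence $(a)\Leftrightarrow(d)$ holds without the hypothesis $\{\widehat{s}:s\in S\}\subseteq\Gamma$, and that is exactly the factorization step you carry out by hand.
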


   \begin{proof}
   Statements (1), (2) and (3) follow from Remark 2.9, Theorem
   3.7, 3.10 and 3.11. (Theorem 3.10 need to show that the hypotheses
   of Remark 2.9 are satisfied.)

   Let $(T,\varphi)$ satisfy (1) and (2). We first show there is a
   continuous $\eta: \Gamma\rightarrow T$ such that $\eta\circ
   e=\varphi$. By Lemma 3.1 it suffices to show for this that for
   each $\p\in S^{Lmc}$, $\widetilde{\varphi}$ is constant
   on $[\p]$, where $\widetilde{\varphi}:S^{Lmc}\rightarrow T$ is the continuous extension of
   $\varphi :S\rightarrow T$. Suppose instead that we have $\p \equiv\q$ with
   $\widetilde{\varphi}(\p)\neq\widetilde{\varphi}(\q)$. Since $T$ is
   completely regular, pick $f\in \mathcal{C}(T)$ such that
   $f(\widetilde{\varphi}(\p))\neq f(\widetilde{\varphi}(\p))$. By Lemma
   2.3(ii), $f\in wap(T)$. Consequently
   $f\circ\varphi\in wap(S)$.(Given $<t_{n}>_{n=1}^{\infty}$ and
    $<s_{n}>_{n=1}^{\infty}$ in $S$ such that
     $lim_{n\rightarrow\infty}lim_{k\rightarrow\infty}f\circ\varphi(t_{n}s_{k})$ and
     $lim_{k\rightarrow\infty}lim_{n\rightarrow\infty}f\circ\varphi(t_{n}s_{k})$
     exist, we have
   \begin{eqnarray*}
    lim_{n\rightarrow\infty}lim_{k\rightarrow\infty}
    f\circ\varphi(t_{n}s_{k})&=&lim_{n\rightarrow\infty}lim_{k\rightarrow\infty}
    f(\varphi(t_{n})\varphi(s_{k}))\\
    &=&lim_{k\rightarrow\infty}lim_{n\rightarrow\infty}
    f(\varphi(t_{n})\varphi(s_{k}))\\
   &=& lim_{k\rightarrow\infty}lim_{n\rightarrow\infty}
    f\circ\varphi(t_{n}s_{k})).
    \end{eqnarray*}

$f\circ{\widetilde{\varphi}}$ is a continuous extension of
$f\circ\varphi$ to $S^{Lmc}$ so
$f\circ\widetilde{\varphi}=\widehat{f\circ\varphi}$. But then
$\widehat{f\circ\varphi}(\p)\neq\widehat{f\circ\varphi}(\q)$ so that
$\p$ is not equivalent with $\q$, a contradiction.

To complete the proof, we show that $\eta$ is a homomorphism. Since
$\widetilde{\varphi}$ is a homomorphism also since $\eta\circ\gamma$
is a continuous extension of $\varphi$, we have
$\eta\circ\gamma=\widetilde{\varphi}$. Thus, given $\p$ and $\q$ in
$S^{Lmc}$, we have
\begin{eqnarray*}
\eta(\cap[\p]\dot{+}\cap[\q])&=&\eta(\cap[\p*\q])\\
  &=&\eta\circ\gamma(\p*\q)\\
  &=&\widetilde{\varphi}(\p*\q)\\
  &=&\widetilde{\varphi}(\p)\widehat{\varphi}(\q)\\
  &=&\eta\circ\gamma(\p)\eta\circ\gamma(\q)\\
  &=&\eta(\cap[\p])\eta(\cap[\q]).
  \end{eqnarray*}
  \end{proof}

\begin{thm}
Let $\Gamma$ be a set of pure $z-$filter. There exist an operator
$*$ on $\Gamma$ and a function $h:S\rightarrow\Gamma$ satisfying
conditions $(a)(i)$ and $(ii)$ of  Theorem 2.10 such that
$(\Gamma,*,h)$ is $S^{wap}$ if and only if $\Gamma$ satisfies
conditions $(a)$, $(c)$ and $(d)$ of Theorem 3.7.
\end{thm}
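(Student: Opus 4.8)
The plan is to derive both implications by combining results already established, the one genuinely new point being the identification, in the ``only if'' direction, of the fibres of an abstract $z$-filter model of $S^{wap}$ with the classes of the relation $\equiv$ of Definition~3.5. By Theorem~3.7, for a set $\Gamma$ of pure $z$-filters the conjunction of $(a)$, $(c)$, $(d)$ is equivalent to $(a)$ alone, so the right-hand side of the statement asserts exactly that $\Gamma=\{\bigcap[\p]:\p\in S^{Lmc}\}$. For the ``if'' direction I would assume this and invoke Theorem~3.12 (whose proof already bundles Remark~2.9(ii), Theorem~3.7, Theorem~3.10 and Theorem~3.11): with $*=\dot{+}$ as furnished there and $h=e$, the triple $(\Gamma,\dot{+},e)$ is $S^{wap}$. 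It then remains only to note that $h=e$ satisfies $(a)(i)$ and $(ii)$ of Theorem~2.10: $(a)(i)$ because $e(s)=\gamma(\widehat{s})\subseteq\widehat{s}$ and $s\in\bigcap\widehat{s}$ (if $Z(f)\in\widehat{s}$ then $f(s)=0$, so $s\in Z(f)$), and $(a)(ii)$ because it is condition $(b)$ of Theorem~2.7, which holds since $\Gamma=\{\bigcap[\p]_{\equiv}\}$ with every $\equiv$-class closed in $S^{Lmc}$ (as shown inside the proof of Theorem~3.7), so that $(c)\Rightarrow(b)$ of Theorem~2.7 applies.

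\textbf{The ``only if'' direction.} Suppose there are $*$ and $h$ with $h$ satisfying $(a)(i)$, $(ii)$ of Theorem~2.10 and with $(\Gamma,*,h)$ equal to $S^{wap}$. Since $S^{wap}$ is compact Hausdorff and $h$ is continuous with dense image, $h$ witnesses condition $(a)$ of Theorem~2.10; so by Theorem~2.10 and Remark~2.9(i), $\Gamma$ is a quotient of $S^{Lmc}$, and by Theorem~2.8 there is an equivalence relation $R$ on $S^{Lmc}$ with $S^{Lmc}/R$ Hausdorff (hence each $R$-class closed) such that $\Gamma=\{\bigcap[\p]_{R}:\p\in S^{Lmc}\}$, with quotient map $\gamma(\p)=\bigcap[\p]_{R}$. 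The first step is to show $\gamma\circ\varepsilon=h$: by $(a)(i)$ every member of $h(s)$ is a zero-set containing $s$, hence lies in $\widehat{s}$, so $h(s)\subseteq\widehat{s}$, and since $h(s)\in\Gamma$ the uniqueness clause of Theorem~2.7 forces $h(s)=\gamma(\widehat{s})=\gamma(\varepsilon(s))$. The main step is to show $R$ coincides with $\equiv$. If $\gamma(\p)=\gamma(\q)$: each $f\in wap(S)$ has a continuous extension $g$ to $\Gamma=S^{wap}$ with $g\circ h=f$, whence $g\circ\gamma=\widehat{f}$ (both are continuous extensions of $f$ along the dense map $\varepsilon$), so $\widehat{f}(\p)=g(\gamma(\p))=g(\gamma(\q))=\widehat{f}(\q)$ and $\p\equiv\q$. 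If $\gamma(\p)\neq\gamma(\q)$: by complete regularity of $\Gamma$ pick $\Phi\in\mathcal{C}(\Gamma)$ with $\Phi(\gamma(\p))\neq\Phi(\gamma(\q))$; by Lemma~2.3(ii) $\Phi\in wap(\Gamma)$, and $\Phi\circ h\in wap(S)$ by the double-limit criterion of Theorem~2.1 applied to $(\Phi\circ h)(s_{m}t_{n})=\Phi(h(s_{m})h(t_{n}))$ (the computation already carried out in the proof of Theorem~3.12); since $\Phi\circ\gamma$ is then the continuous extension of $\Phi\circ h$, we obtain $\widehat{\Phi\circ h}(\p)\neq\widehat{\Phi\circ h}(\q)$, i.e. $\p\not\equiv\q$. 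Therefore $R$ equals $\equiv$, so $\Gamma=\{\bigcap[\p]:\p\in S^{Lmc}\}$, which is $(a)$ of Theorem~3.7, and $(c)$, $(d)$ follow from Theorem~3.7.

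\textbf{Main obstacle.} The ``if'' direction is essentially bookkeeping over the earlier theorems. The crux is the identification of $R$ with $\equiv$ in the ``only if'' direction; I expect the delicate points there to be: using ``$(\Gamma,*,h)$ is $S^{wap}$'' with its full force, in particular the extension property of Theorem~2.2 that a function extends to $S^{wap}$ precisely when it is weakly almost periodic; matching the quotient map $\gamma$ from Theorem~2.10 with the evaluation picture via $\gamma\circ\varepsilon=h$ (where hypothesis $(a)(i)$ of Theorem~2.10 is essential, since without it $h$ need not land in the ``right'' element of $\Gamma$); and the pull-back of weakly almost periodic functions along the homomorphism $h$, which rests on Lemma~2.3(ii) and the criterion of Theorem~2.1. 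Throughout, one must keep in mind that $\Gamma$ carries the quotient topology generated by the sets $(A^{*})^{c}$, which is exactly the topology under which it ``is'' $S^{wap}$.
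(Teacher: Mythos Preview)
Your proposal is correct and follows essentially the same route as the paper's proof: reduce the right-hand side to condition~$(a)$ of Theorem~3.7, invoke Theorem~3.12 for sufficiency, and for necessity pass through Theorem~2.10 to get $\Gamma$ as a quotient of $S^{Lmc}$, identify $h$ with $e$ via hypothesis~$(a)(i)$ and the uniqueness clause of Theorem~2.7, and then show that the fibre relation $R$ coincides with $\equiv$. The only minor divergence is in the step $\Phi\circ h\in wap(S)$: you argue via Lemma~2.3(ii) and the double-limit criterion (exactly as inside the proof of Theorem~3.12), whereas the paper appeals directly to Theorem~2.2, observing that $\Phi\circ h$ extends continuously to $\Gamma=S^{wap}$ (namely by $\Phi$ itself) and hence is weakly almost periodic; both arguments are valid, the paper's being marginally shorter. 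Your explicit verification that $e$ satisfies $(a)(i)$ and $(a)(ii)$ in the sufficiency direction is a point the paper leaves implicit.
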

\begin{proof} (Sufficiency) By Theorem 3.7,
$\Gamma=\{\cap[\p]:\p\in S^{Lmc}\}$ so Theorem 3.12
applies.

(Necessity) By Theorem 2.10, $\Gamma$ is a quotient of
$S^{Lmc}$. Thus by Remark 2.9, the function $\gamma
:S^{Lmc}\rightarrow\Gamma$ defined by $\gamma(\p)\subseteq
\p$ is (well defined and) a quotient map. By condition $(a)(i)$, for
each $s\in S$, $s\in \cap h(s)$. That is, $h(s)\subseteq
\widehat{s}$. Then $h(s)=\gamma(\widehat{s})=e(s)$ so $h=e$.

Define an equivalence relation $R$ on $S$ by $pRq$ if and only if
$\gamma(\p)=\gamma(\p)$. It suffices to show that $\p R\q$ if and
only if $\p\equiv\q$. For then we get $\Gamma=\{\cap[\p]:\p\in
S^{Lmc}\}$ and Theorem 3.7 applies. To this end, let
$\p,\q\in S^{Lmc}$ and assume $\p R\q$. Suppose
$\p\equiv q$ is not true and pick $f\in wap(S)$ such that
$\widehat{f}(\p)\neq \widehat{f}(\q)$. By Theorem 2.2, there
exists $g\in \mathcal{C}(\Gamma)$ such that $g\circ e=f$. Then
$g\circ \gamma$ is continuous extension of $f$ to
$S^{Lmc}$. (For $s\in S,$ $(g\circ
\gamma)(\widehat{s})=g(e(s))=f(s)$.) Thus $g\circ
\gamma=\widehat{f}$. Since $\p R\q$, then
$\widehat{f}(\p)=g(\gamma (\p))=g(\gamma (\q))=\widehat{f}(\q),$
contradiction. Now assume that $\p\equiv\q$ and suppose that
$\gamma(\p)\neq\gamma(\q)$. Pick $g\in \mathcal{C}(\Gamma)$ such
that $g(\gamma(\p))\neq g(\gamma(\q))$. Define $f\in C(S)$ by
$f(s)=g(e(s))$. Then, since $f$ extends continuously to $\Gamma$,
$f\in wap(S)$ by Theorem 2.2. But, as above $g\circ \gamma=f$ so
$\widehat{f}(\p)\neq \widehat{f}(\q)$, a
contradiction.\end{proof}

We now proceed to apply our results to a determination of size of
$S^{wap}$.
\begin{defn}
Let $A\subseteq S$. We say $A$ is an unbounded subset of $S$ if
\begin{equation*}  \overline{A}\cap S^{*}\neq\emptyset,\end{equation*}
 where $S^*=S^{Lmc}-S$. Also a
sequence in $S$ is unbounded if range of sequence is unbounded.
\end{defn}
\begin{lem}
Let $A\in Z(Lmc(S))$. Then

a) Let there exist $\widetilde{p}$ and $\widetilde{q}$ in
$S^*$ such that $A\in \p+\q$. Then there exist
one-to-one sequences $\{t_n\}$ and $\{s_n\}$ in $S$ such that
$\{s_kt_n:k\leq n\}\subseteq A$.

 b) Let there exist
one-to-one unbounded sequences $\{t_n\}$ and $\{s_n\}$ in $S$ such
that $\{s_kt_n:k\leq n\}\subseteq A$. Then there exist $\p$ and
$\q$ in $S^*$ such that $\p*\q\in \overline{A}$.
\end{lem}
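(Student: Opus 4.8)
The plan is to treat the two parts separately: part (a) will follow from the definition of $\p+\q$ together with complete regularity of $S^{Lmc}$, while part (b) will follow from the net description of the product on $\mathcal{R}\simeq S^{Lmc}$ recalled in Section~2.

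For part (a), I would write $A=Z(f)$ with $f\in Lmc(S)$ and set $\Omega=\Omega_{\q}(A)=\{s\in S:\lambda_s^{-1}(A)\in\q\}$, noting that $\lambda_s^{-1}(A)=Z(L_sf)\in Z(Lmc(S))$ since $Lmc(S)$ is translation invariant, so this makes sense; by Definition~2.4 the hypothesis $A\in\p+\q$ says that every zero set containing $\Omega$ belongs to $\p$. The first step is to prove that $\Omega$ is unbounded: otherwise $\overline{\varepsilon(\Omega)}$ is a compact subset of $\varepsilon(S)$ disjoint from the point $\mu_p\in S^{*}$ determined by $\p$, and complete regularity of $S^{Lmc}$ yields $\widehat g\in\mathcal{C}(S^{Lmc})$ vanishing on $\overline{\varepsilon(\Omega)}$ with $\widehat g(\mu_p)=1$, whence $g=\widehat g\circ\varepsilon\in Lmc(S)$ has $\Omega\subseteq Z(g)$ while $\mu_p\notin\overline{Z(g)}$, so $Z(g)\notin\p$, contradicting $A\in\p+\q$; in particular $\Omega$ is infinite. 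I would then observe that for each $s\in\Omega$ the zero set $\lambda_s^{-1}(A)$ lies in $\q$, and since $\q$ corresponds to a point of $S^{*}$ every member of $\q$ is unbounded, hence infinite, and so is any finite intersection $\bigcap_{k\le n}\lambda_{s_k}^{-1}(A)\in\q$. Finally I would build the sequences recursively: pick $s_1\in\Omega$; given distinct $s_1,\dots,s_n\in\Omega$ and distinct $t_1,\dots,t_{n-1}\in S$, pick $t_n\in\bigl(\bigcap_{k=1}^{n}\lambda_{s_k}^{-1}(A)\bigr)\setminus\{t_1,\dots,t_{n-1}\}$ and then $s_{n+1}\in\Omega\setminus\{s_1,\dots,s_n\}$. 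Then $\langle s_n\rangle$ and $\langle t_n\rangle$ are one-to-one, and for $k\le n$ we have $t_n\in\lambda_{s_k}^{-1}(A)$, i.e. $s_kt_n\in A$, so $\{s_kt_n:k\le n\}\subseteq A$.

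For part (b), since $\{s_n\}$ is unbounded I would fix $\mu\in\overline{\{s_n\}}\cap S^{*}$ and a pure $z$-filter $\p$ with $\bigcap_{C\in p}\overline{C}=\{\mu\}$, so $\p\in S^{*}$. Since $\{t_n\}$ is one-to-one, each tail $\{t_n:n\ge m\}$ differs from the whole range by finitely many points of $S$ and is hence again unbounded, so $\bigcap_{m\ge1}\overline{\{t_n:n\ge m\}}\cap S^{*}\ne\emptyset$; I would fix $\nu$ in this intersection and a pure $z$-filter $\q$ at $\nu$, so $\q\in S^{*}$. Next I would choose a net $(x_\alpha)$ in $\{s_n\}$ with $\varepsilon(x_\alpha)\to\mu$ and --- using $\nu\in\overline{\{t_n:n\ge m\}}$ for every $m$, and indexing by pairs consisting of a neighbourhood of $\nu$ and a positive integer --- a net $(y_\beta)$ in $\{t_n\}$, say $y_\beta=t_{n(\beta)}$, with $\varepsilon(y_\beta)\to\nu$ and $n(\beta)\to\infty$. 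By the construction of $*$ on $\mathcal{R}$ recalled in Section~2, $\p*\q$ corresponds to $\mu\nu=\lim_\alpha\lim_\beta\varepsilon(x_\alpha y_\beta)$, the inner limit existing (and equal to $\varepsilon(x_\alpha)\nu$) because $\lambda_{\varepsilon(x_\alpha)}$ is continuous and the outer existing because $\mathbf{T}_{\nu}f\in Lmc(S)$ for every $f\in Lmc(S)$. Fixing $\alpha$ and writing $x_\alpha=s_{k(\alpha)}$, once $n(\beta)\ge k(\alpha)$ one has $x_\alpha y_\beta=s_{k(\alpha)}t_{n(\beta)}\in A$, hence $\varepsilon(x_\alpha y_\beta)\in\overline{A}$; since this holds eventually in $\beta$ and $\overline{A}$ is closed, $\lim_\beta\varepsilon(x_\alpha y_\beta)\in\overline{A}$, and then the outer limit over $\alpha$, still inside $\overline{A}$, gives $\mu\nu\in\overline{A}$, that is, $\p*\q\in\overline{A}$.

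The routine ingredients --- $L_sf,\mathbf{T}_{\nu}f\in Lmc(S)$, members of a $z$-filter attached to a point of $S^{*}$ are unbounded, finite subsets of $S$ are bounded --- are immediate from Section~2. I expect the main obstacle to be the iterated-limit argument in part (b): because $S^{Lmc}$ is only right topological, $\mu\nu$ must be obtained as the double net limit, and one must simultaneously arrange that the approximating net taken from $\{t_n\}$ has indices tending to infinity (this is exactly where the one-to-one and unbounded hypotheses enter) and verify that membership in the closed set $\overline{A}$ is preserved through both limits. In part (a) the point requiring care is the unboundedness of $\Omega_{\q}(A)$, which hinges on separating the compact set $\overline{\varepsilon(\Omega_{\q}(A))}\subseteq\varepsilon(S)$ from $\mu_p\in S^{*}$ by a function in $Lmc(S)$.
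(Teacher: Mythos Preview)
Your proof is correct and follows essentially the same route as the paper: for (a) you use the set $\Omega_{\q}(A)$ exactly as the paper does (the paper writes $\Omega_{\p}(A)$ and has $\q\in\overline{B}$, swapping the roles of $\p$ and $\q$ relative to Definition~2.5, but the recursive construction of $\langle s_n\rangle$ and $\langle t_n\rangle$ is identical), and your complete-regularity argument for unboundedness simply makes explicit what the paper asserts in one line. For (b) the paper only says ``It is obvious,'' so your net argument is a correct elaboration rather than a different approach.
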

\begin{proof} $(a)$ Pick $\p$ and $\q$. Let
$B=\Omega_{\p}(A)=\{s\in S:\lambda_s^{-1}(A)\in \p\}$. Then $\q\in
\overline{B}$ and since $\q\notin S$, $B$ is unbounded. Let
$\{s_n\}$ be a one-to-one sequence in $B$. For each $n$, we have
$\bigcap_{k=1}^{n}\lambda_{s_k}^{-1}(A)\in\p$ and is hence
unbounded. Pick $t_n\in\bigcap_{k=1}^{n}\lambda_{s_k}^{-1}(A)$
such that $t_n\notin\{t_k:k<n\}$.

$(b)$ It is obvious. \end{proof}

\begin{lem}
Let $S$ be a commutative semitopological semigroup and let $\p\in S^*-cl_{S^{Lmc}}(S^**S^*)$. Then for each $\q\in
S^{Lmc}-\{\p\}$ there exists $f\in wap(S)$ such that
Range$(f)=[0,1]$, $\widehat{f}(\p)=1$ and $\widehat{f}(\q)=0$.
Consequently $[\p]=\{\p\}$.
\end{lem}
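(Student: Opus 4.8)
The plan is to produce $f$ as the pull‑back along $\varepsilon$ of a Urysohn function on $S^{Lmc}$, and then to verify that it is weakly almost periodic straight from the double‑limit criterion of Theorem 2.1, using commutativity of $S$ together with the fact that such an $f$ is forced to vanish on $S^**S^*$.

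First I would use the hypothesis. Since $S^{Lmc}$ is compact Hausdorff, hence normal, and $K:=cl_{S^{Lmc}}(S^**S^*)\cup\{\q\}$ is a closed set with $\p\notin K$ (because $\p\notin cl_{S^{Lmc}}(S^**S^*)$ and $\p\neq\q$), Urysohn's lemma gives a continuous $g\colon S^{Lmc}\to[0,1]$ with $g(\p)=1$ and $g\equiv0$ on $K$. Set $f:=g\circ\varepsilon$. As $\varepsilon^{*}\colon\mathcal{C}(S^{Lmc})\to Lmc(S)$ is an isometric isomorphism, $f=\varepsilon^{*}(g)\in Lmc(S)$ and its continuous extension to $S^{Lmc}$ is $\widehat{f}=g$; in particular $f$ takes values in $[0,1]$, $\widehat{f}(\p)=g(\p)=1$ and $\widehat{f}(\q)=g(\q)=0$. (That $\mathrm{Range}(f)$ is actually all of $[0,1]$, rather than merely a subset with $0$ and $1$ in its closure, can be arranged with a small modification of $g$; it is not needed for what follows.)

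The heart of the matter is $f\in wap(S)$. Let $\{s_m\}$ and $\{t_n\}$ be sequences in $S$ for which the iterated limits of Theorem 2.1 exist. Passing to subnets I may assume $\varepsilon(s_m)\to\sigma$ and $\varepsilon(t_n)\to\tau$ in $S^{Lmc}$, the sequential limits being unchanged. Since $\varepsilon$ is a homomorphism and $g$ is continuous, $f(s_mt_n)=g(\varepsilon(s_m)*\varepsilon(t_n))$. Because each left translation by a point of $\varepsilon(S)$ and each right translation on $S^{Lmc}$ is continuous, fixing $m$ and letting $n$ vary gives $\lim_n f(s_mt_n)=g(\varepsilon(s_m)*\tau)$, and then $\lim_m\lim_n f(s_mt_n)=g(\sigma*\tau)$. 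Fixing $n$ and letting $m$ vary gives $\lim_m f(s_mt_n)=g(\sigma*\varepsilon(t_n))$; here I would use commutativity of $S$, which forces $\varepsilon(t)*\mu=\mu*\varepsilon(t)$ for all $t\in S$ and $\mu\in S^{Lmc}$, so $\sigma*\varepsilon(t_n)=\varepsilon(t_n)*\sigma$, and by continuity of right translation by $\sigma$ one obtains $\lim_n\lim_m f(s_mt_n)=g(\tau*\sigma)$. It then remains to check $g(\sigma*\tau)=g(\tau*\sigma)$: if $\sigma\in\varepsilon(S)$ or $\tau\in\varepsilon(S)$ the two products are equal by the commutation relation just used; otherwise $\sigma,\tau\in S^*$, so $\sigma*\tau$ and $\tau*\sigma$ both lie in $S^**S^*$ and $g$ vanishes on both, each value being $0$. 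Hence the two iterated limits agree, and $f\in wap(S)$ by Theorem 2.1.

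Finally, for the last assertion: if $\nu\in[\p]$ with $\nu\neq\p$, applying what was just proved with $\q=\nu$ produces $f\in wap(S)$ with $\widehat{f}(\p)=1\neq0=\widehat{f}(\nu)$, contradicting $\nu\equiv\p$; thus $[\p]=\{\p\}$. I expect the delicate step to be the weak almost periodicity check: keeping track of which of left/right translation is continuous at each stage, legitimately passing from sequences to convergent subnets, and in particular recognising that the reversed iterated limit is $g(\tau*\sigma)$ rather than $g(\sigma*\tau)$, so that equality of the two limits genuinely rests on commutativity of $S$ together with the vanishing of $g$ on $S^**S^*$, which is exactly what the hypothesis $\p\notin cl_{S^{Lmc}}(S^**S^*)$ makes possible.
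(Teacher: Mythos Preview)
Your proof is correct and the overall structure matches the paper's: build $f$ as the restriction to $S$ of a Urysohn function on $S^{Lmc}$ that is $1$ at $\p$ and $0$ on $cl_{S^{Lmc}}(S^**S^*)\cup\{\q\}$, then verify $f\in wap(S)$ via the double-limit criterion. The construction and the final consequence $[\p]=\{\p\}$ are handled the same way.

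Where you diverge is in the wap verification. The paper argues by contradiction: assuming the two iterated limits along sequences $\{s_n\},\{t_n\}$ differ, it thins to one-to-one sequences, splits into the case where both are unbounded (then Lemma~3.15(b) produces points of $S^**S^*$ in $\overline{A}$, contradicting the choice of $A$) and the case where one is bounded (dismissed in one line). You instead compute both iterated limits directly: pass to subnets converging to $\sigma,\tau\in S^{Lmc}$, use continuity of left translation by elements of $\varepsilon(S)$ and of all right translations to identify the limits as $g(\sigma*\tau)$ and $g(\tau*\sigma)$, and then observe that either one of $\sigma,\tau$ lies in $\varepsilon(S)$ (whence the products coincide by the commutation $\varepsilon(t)*\mu=\mu*\varepsilon(t)$) or both lie in $S^*$ (whence $g$ kills both products). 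Your route bypasses Lemma~3.15 entirely and makes the bounded/unbounded dichotomy transparent rather than implicit; the paper's route is more combinatorial and closer in spirit to the discrete case it is modelled on.
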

\begin{proof} Pick $A\in(\p)^{\circ}$ such that
$\overline{A}\cap cl_{S^{Lmc}}(S^**S^*)=\emptyset$. Pick $B\in(\p)^{\circ}$
such that $\q\notin \overline{B}$. Then there exists $f\in
Lmc(S)$ such that $\widehat{f}|_{A\cap B}=1$, $\widehat{f}((\overline{A})^c)\subseteq [0,1)$,
$\widehat{f}=0$ on $cl_{S^{Lmc}}(S^**S^*)\cup\{\widetilde{q}\}$ and $f(S)=[0,1]$. It suffices to show that
$f\in wap(S)$. Suppose, instead, we  have sequences $\{t_n\}$ and
$\{s_n\}$ such that $lim_{k\rightarrow \infty}lim_{n\rightarrow
\infty}f(s_kt_n)=1$ and $lim_{n\rightarrow
\infty}lim_{k\rightarrow \infty}f(s_kt_n)=0$. We may assume by
thinning that $\{t_n\}$ and $\{s_n\}$ are one-to-one and then,
\begin{equation*}  \{s_kt_n:k\leq
n\mbox{ for all sufficiently large }n,k\in \mathbb{N}\}\subseteq \overline{A}
\end{equation*}
 so if $\{s_n\}$ and $\{t_n\}$ be unbounded sequences, by Lemma 3.15(b),
$\overline{A}\cap (S^**S^*)\neq\emptyset$ and this is a contradiction.\\
If $\{s_n\}$ or $\{t_n\}$ be bounded in $S$ and limits are exist then limits are equal.
 \end{proof}

 \begin{thm}
 Let $S$ be a Hausdorff non-compact commutative semitopological semigroup and assume $A\in Z(Lmc(S))$ be an unbounded set such
that \\
$(i)$ card$(S^{Lmc})=$card$(\overline{A})$ and\\
$(ii)$ $\overline{A}\cap cl_{S^{Lmc}}(S^**S^*)=\emptyset$.\\
 Then card$(S^{wap})=$card$(S^{Lmc})$.
\end{thm}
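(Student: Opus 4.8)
The plan is to establish the two inequalities $\mathrm{card}(S^{wap}) \leq \mathrm{card}(S^{Lmc})$ and $\mathrm{card}(S^{Lmc}) \leq \mathrm{card}(S^{wap})$ separately. The first is immediate: by Theorem 3.12, $e(S)$ is dense in $\Gamma = S^{wap}$ and $\gamma : S^{Lmc} \to \Gamma$ is a surjection (indeed, $\gamma(\widetilde p) = \bigcap[\widetilde p]$), so $\mathrm{card}(S^{wap}) \leq \mathrm{card}(S^{Lmc})$ with no further work. The whole content is therefore the reverse inequality, for which I would produce an injection of $\overline{A}$ (which has the same cardinality as $S^{Lmc}$ by hypothesis $(i)$) into $S^{wap}$, or equivalently show that the quotient map $\gamma$ restricted to $\overline{A}$ is injective.

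First I would unwind what injectivity of $\gamma|_{\overline A}$ means: $\gamma(\widetilde p) = \gamma(\widetilde q)$ iff $\widetilde p \equiv \widetilde q$ iff $\widehat f(\widetilde p) = \widehat f(\widetilde q)$ for all $f \in wap(S)$. So I must show: if $\widetilde p, \widetilde q \in \overline{A}$ are distinct then some $f \in wap(S)$ separates them. The key structural input is hypothesis $(ii)$, $\overline{A} \cap cl_{S^{Lmc}}(S^**S^*) = \emptyset$, which is exactly the hypothesis of Lemma 3.16. I would split $\overline{A}$ into its part in $S$ and its part in $S^* = S^{Lmc}-S$. For a point $\widetilde p \in \overline{A} \cap S^*$: by hypothesis $(ii)$, $\widetilde p \notin cl_{S^{Lmc}}(S^**S^*)$, so Lemma 3.16 applies and gives, for each $\widetilde q \neq \widetilde p$, an $f \in wap(S)$ with $\widehat f(\widetilde p) = 1 \neq 0 = \widehat f(\widetilde q)$; in particular $[\widetilde p] = \{\widetilde p\}$, so $\gamma(\widetilde p)$ is a singleton class and $\gamma$ is injective at $\widetilde p$. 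For $\widetilde p = \varepsilon(x) \in \overline A \cap S$: points of $S$ are always separated from everything else in $S^{wap}$ because $S$ is Hausdorff and, more to the point, one can use bounded continuous (hence, on a suitable restriction, $wap$) functions — but I should be careful here, since not every element of $\mathcal{CB}(S)$ is in $wap(S)$. The cleanest route is to observe that hypothesis $(ii)$ also forces $\widehat f$ (for the $f$ built in Lemma 3.16 with $a = \varepsilon(x)$-data) to separate $\varepsilon(x)$ from any other point, or alternatively to note that if $\widetilde q \equiv \varepsilon(x)$ then $\widetilde q \in [\varepsilon(x)]$, and running the Lemma 3.16 argument with the roles adjusted shows $[\varepsilon(x)] = \{\varepsilon(x)\}$ as well, provided $\varepsilon(x) \notin cl_{S^{Lmc}}(S^**S^*)$, which holds since $\varepsilon(x) \in \overline A$ and we invoke $(ii)$.

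Assembling: every $\widetilde p \in \overline A$ has $[\widetilde p] = \{\widetilde p\}$, hence $\gamma|_{\overline A} : \overline A \to S^{wap}$ is injective, giving $\mathrm{card}(\overline A) \leq \mathrm{card}(S^{wap})$; combined with hypothesis $(i)$, $\mathrm{card}(S^{Lmc}) = \mathrm{card}(\overline A) \leq \mathrm{card}(S^{wap})$. Together with the trivial reverse inequality this yields $\mathrm{card}(S^{wap}) = \mathrm{card}(S^{Lmc})$.

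The main obstacle I anticipate is the honest verification that Lemma 3.16 can be invoked for \emph{every} point of $\overline A$, including points of $S$ and including the degenerate case where $\widetilde p$ and $\widetilde q$ are on different sides of the $S$ / $S^*$ split; Lemma 3.16 as stated assumes $\widetilde p \in S^* - cl_{S^{Lmc}}(S^**S^*)$, so a separate (easy, but not automatic) argument is needed when $\widetilde p \in S$, and one must check that the non-compactness hypothesis on $S$ is what guarantees $A$ is genuinely unbounded so that $\overline A$ really meets $S^*$ and the cardinality bookkeeping in $(i)$ is not vacuous. A secondary subtlety is making sure that ``$\widehat f$ separates'' really implies ``$\equiv$ fails'' — this is immediate from the definition of $\equiv$ in Definition 3.5, but the function $f$ must be verified to lie in $wap(S)$, which is precisely the content of the last paragraph of the proof of Lemma 3.16 and may need to be re-cited rather than re-proved.
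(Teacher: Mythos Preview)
Your proposal is correct and follows essentially the same route as the paper: one inequality is trivial from the surjection $\gamma:S^{Lmc}\to S^{wap}$, and the other comes from Lemma~3.16, which forces $[\widetilde p]=\{\widetilde p\}$ for every $\widetilde p\in\overline{A}$, so that $\gamma|_{\overline A}$ is injective and hypothesis~$(i)$ finishes the count. Your extra care about the split $\overline A\cap S$ versus $\overline A\cap S^{*}$ is warranted---Lemma~3.16 as \emph{stated} assumes $\widetilde p\in S^{*}$, and the paper's two-line proof simply applies it to all of $\overline A$ without comment---but as you observe, the proof of Lemma~3.16 only uses $\widetilde p\notin cl_{S^{Lmc}}(S^{*}*S^{*})$, so hypothesis~$(ii)$ suffices for every point of $\overline A$.
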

\begin{proof}
 It is obvious that
card$(S^{wap})\leq$card$(S^{Lmc})$. By Lemma 3.16, for each $\p\in \overline{A}$, $[\p]=\{\p\}$.
Therefore $card(S^{wap})\geq card(\overline{A})=card(S^{Lmc}).$
\end{proof}
\begin{exam}
Let $S$ be a dense subsemigroup of $((1,+\infty),+)$. Then
card($S^{wap})=$ card$(S^{Lmc})$.
Since $cl_{S^{Lmc}}((1,2]\cap S)$ is infinite.
Let $\p,\q\in cl_{S^{Lmc}}((1,2]\cap S)-S$ then there
exist two nets $\{x_\alpha\}$ and $\{y_\beta\}$ in $(1,2]$ such that
$x_\alpha\rightarrow \p$ and $y_\beta\rightarrow \q$. This implies
that $\p*\q\notin cl_{S^{Lmc}}((1,\frac{3}{2}]\cap S)$. Therefore
\begin{equation*}
 S^*-cl_{S^{Lmc}}
(S^**S^*)\neq\emptyset.
\end{equation*}
It is obvious $card(cl_{S^{Lmc}}((1,2]\cap
S))=card(cl_{S^{Lmc}}[2,+\infty)\cap S)$ and so
\begin{equation*}  card(cl_{S^{Lmc}}((1,2]\cap S))=
card(S^{Lmc}).\end{equation*}
Now let $A=(1,2]\cap S$, then $A$ satisfy in assumptions of Theorem 3.17. Thus by Theorem 3.17,
card$(S^{wap})=$card$(S^{Lmc}).$
\end{exam}

 \begin{lem}
Let $S$ be a semitopological semigroup and let
 \begin{equation*}  \p\in S^*-cl_{S^{Lmc}}(S^**S^{Lmc}).
 \end{equation*}
 Then for each $\q\in
S^{Lmc}-\{\p\}$ there exists $f\in wap(S)$ such that
Range$(f)=[0,1]$ and $\widehat{f}(\p)=1$ and $\widehat{f}(\q)=0$.
Consequently $[\p]=\{\p\}$.
\end{lem}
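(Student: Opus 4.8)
The plan is to mimic the proof of Lemma 3.16, dropping the commutativity hypothesis at the cost of the stronger assumption $\widetilde p\notin cl_{S^{Lmc}}(S^**S^{Lmc})$ (rather than just $S^**S^*$). First I would pick $A\in(\widetilde p)^\circ$ with $\overline A\cap cl_{S^{Lmc}}(S^**S^{Lmc})=\emptyset$, which is possible since $\widetilde p$ lies outside that closed set; then pick $B\in(\widetilde p)^\circ$ with $\widetilde q\notin\overline B$, using that $S^{Lmc}$ (equivalently $\mathcal R$) is Hausdorff. Since $\mathcal C(S^{Lmc})\cong Lmc(S)$ and $S^{Lmc}$ is compact Hausdorff, Urysohn's lemma gives $f\in Lmc(S)$ with $\widehat f\equiv 1$ on $\overline{A}\cap\overline{B}$ (a neighbourhood of $\widetilde p$), $\widehat f\equiv 0$ on $cl_{S^{Lmc}}(S^**S^{Lmc})\cup\{\widetilde q\}$, $\widehat f((\overline A)^c)\subseteq[0,1)$, and $f(S)=[0,1]$; in particular $\widehat f(\widetilde p)=1$ and $\widehat f(\widetilde q)=0$.

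The core is to show $f\in wap(S)$, for which I would use the sequential criterion of Theorem 2.1. Suppose not: then there are sequences $\langle t_n\rangle,\langle s_n\rangle$ in $S$ with $\lim_k\lim_n f(s_kt_n)=1$ and $\lim_n\lim_k f(s_kt_n)=0$ (after passing to subsequences and relabelling so the two iterated limits differ; WLOG the values $1$ and $0$). By thinning I may take both sequences one-to-one and arrange that $f(s_kt_n)$ is close to $1$, hence $s_kt_n\in\overline A$, for all sufficiently large $k\le n$. Now I distinguish cases on whether $\langle s_n\rangle$ and $\langle t_n\rangle$ are unbounded. If both are unbounded, Lemma 3.15(b) produces $\widetilde r,\widetilde w\in S^*$ with $\widetilde r*\widetilde w\in\overline A$, contradicting $\overline A\cap cl_{S^{Lmc}}(S^**S^{Lmc})=\emptyset$ (note $S^**S^*\subseteq S^**S^{Lmc}$). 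The remaining cases are where at least one of the sequences is bounded; here I would argue, as in Lemma 3.16, that a bounded one-to-one sequence in the Hausdorff space $S$ has a convergent subnet with a limit still in $S$ (using that $S$, being a dense subspace with the property that bounded sets have compact closure in... actually one must be careful), so that continuity of multiplication in the relevant variable forces the two iterated limits to agree, again a contradiction.

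The main obstacle I anticipate is exactly the bounded case: "bounded" here means the range fails to meet $S^*$, i.e. $cl_{S^{Lmc}}(\mathrm{range})\subseteq S$, and one needs this closure to be compact in $S$ so that a subnet converges within $S$ and $\lambda_s$ or $r_s$ continuity can be applied. This is the step that in Lemma 3.16 is dispatched with the single sentence "If $\{s_n\}$ or $\{t_n\}$ be bounded in $S$ and limits are exist then limits are equal"; I would want to make precise that $cl_{S^{Lmc}}(\mathrm{range})\cap S$ is compact (it is closed in the compact $S^{Lmc}$ and contained in $S$), extract a convergent subnet $s_{k_j}\to s\in S$, and then use continuity of $t\mapsto f(st)=f\circ\lambda_s(t)$ (or of $r_s$ on the other side) together with the fact that $\widehat f$ is continuous on $S^{Lmc}$, to evaluate both iterated limits as $\widehat f$ of a single point. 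Finally, once $f\in wap(S)$ is established, the "consequently $[\widetilde p]=\{\widetilde p\}$" is immediate: any $\widetilde q\ne\widetilde p$ has $\widehat f(\widetilde q)=0\ne 1=\widehat f(\widetilde p)$, so $\widetilde q\notin[\widetilde p]$ by definition of $\equiv$.
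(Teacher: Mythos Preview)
Your setup and the ``both unbounded'' case match the paper's proof exactly. The gap is in your treatment of the mixed case where $\langle s_n\rangle$ is \emph{unbounded} and $\langle t_n\rangle$ is \emph{bounded}. You propose to extract subnets $s_\alpha\to u\in S^*$ and $t_\beta\to t\in S$ and then ``evaluate both iterated limits as $\widehat f$ of a single point'' via continuity. The first iterated limit does work this way: for fixed $k$, continuity of $\lambda_{s_k}$ on $S$ gives $\lim_n f(s_kt_n)=f(s_kt)$, and then continuity of $r_t$ on $S^{Lmc}$ gives $\lim_k f(s_kt)=\widehat f(ut)$. But the second iterated limit stalls: for fixed $n$, continuity of $r_{t_n}$ on $S^{Lmc}$ gives $\lim_k f(s_kt_n)=\widehat f(ut_n)$, and now you need $\lim_n \widehat f(ut_n)=\widehat f(ut)$, which would require continuity of $\lambda_u$ on $S^{Lmc}$. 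Since $u\in S^*$ and $S^{Lmc}$ is only \emph{right} topological, this is not available. (In the commutative Lemma~3.16 this asymmetric case does not arise, which is why the one-line dismissal there is legitimate.)

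The paper closes this gap not by a continuity argument but by using the specific construction of $f$: because $u\in S^*$ and $t_n\in S\subseteq S^{Lmc}$, every product $ut_n$ lies in $S^**S^{Lmc}$, on which $\widehat f$ was chosen to vanish identically. Hence $\widehat f(ut_n)=0$ for all $n$, so the second iterated limit is $0$; likewise $\widehat f(ut)=0$, so the first is $0$ as well, contradicting the assumed value $1$. This is exactly the place where the stronger hypothesis $\widetilde p\notin cl_{S^{Lmc}}(S^**S^{Lmc})$, rather than merely $cl_{S^{Lmc}}(S^**S^*)$, earns its keep --- and your proposal never invokes it beyond the ``both unbounded'' case.
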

\begin{proof}
 Pick $A\in(\p)^{\circ}$ such that
$\overline{A}\cap cl_{S^{Lmc}}(S^**S^{Lmc})=\emptyset$. Pick $B\in(\p)^{\circ}$
such that $\q\notin \overline{B}$. Then there exists $f\in
Lmc(S)$ such that $\widehat{f}|_{A\cap B}=1$, $\widehat{f}((\overline{A})^c)\subseteq [0,1)$,
$\widehat{f}=0$ on $cl_{S^{Lmc}}(S^**S^{Lmc})\cup\{\widetilde{q}\}$ and $f(S)=[0,1]$. It suffices to show that
$f\in wap(S)$. Suppose, instead, we  have sequences $\{t_n\}$ and
$\{s_n\}$ such that $lim_{k\rightarrow \infty}lim_{n\rightarrow
\infty}f(s_kt_n)=1$ and $lim_{n\rightarrow
\infty}lim_{k\rightarrow \infty}f(s_kt_n)=0$. We may assume by
thinning that $\{t_n\}$ and $\{s_n\}$ are one-to-one and that,
\begin{equation*}
 \{s_kt_n:k\leq
n\mbox{ for all sufficiently large }n,k\in \mathbb{N}\}\subseteq \overline{A},
\end{equation*}
 so if $\{s_n\}$ and $\{t_n\}$ be unbounded sequences, by Lemma 3.15(b),
$\overline{A}\cap (S^**S^*)\neq\emptyset$ and this is a contradiction.\\
Let $\{s_n\}$ and $\{t_n\}$ are bounded sequences in $S$ or  $\{s_n\}$ be a bounded sequence
and $\{t_n\}$ be an unbounded sequence in $S$,
 then \begin{equation*}  lim_{k\rightarrow \infty}lim_{n\rightarrow
\infty}f(s_kt_n)=lim_{n\rightarrow
\infty}lim_{k\rightarrow \infty}f(s_kt_n).
\end{equation*}
Now let $\{s_n\}$ be an unbounded sequence and $\{t_n\}$ be a bounded sequences in $S$, so
there exist two nets $\{s_\alpha\}\subseteq \{s_n:n\in \mathbb{N}\}$ and
 $\{t_\beta\}\subseteq \{t_n:n\in \mathbb{N}\}$ such that $s_\alpha\rightarrow u\in S^*$ and $t_\beta\rightarrow t\in S$.
 Therefore
 \begin{eqnarray*}
 lim_{k\rightarrow \infty}lim_{n\rightarrow
\infty}f(s_kt_n)&=&lim_{\alpha}lim_{\beta}f(s_\alpha t_\beta)\\
&=&lim_{\alpha}f(s_\alpha t)\\
&=&\widehat{f}(ut)\\
&=& 0.
\end{eqnarray*}
and
\begin{eqnarray*}
 lim_{n\rightarrow \infty}lim_{k\rightarrow
\infty}f(s_kt_n)&=&lim_{\beta}lim_{\alpha}f(s_\alpha t_\beta)\\
&=&lim_{\beta}\widehat{f}(ut_\beta)\\
&=& 0.
\end{eqnarray*}
Thus $f\in wap(S)$ and this complete proof.
\end{proof}
\begin{thm}
Let $S$ be a Hausdorff non-compact semitopological semigroup and assume $A\in Z(Lmc(S))$ be an unbounded set such
that\\
 $(i)$ card$(S^{Lmc})=$card$(\overline{A})$ and\\
$(ii)$ $\overline{A}\cap cl_{S^{Lmc}}(S^**S^{Lmc})=\emptyset$.\\
 Then
card$(S^{wap})=$card$(S^{Lmc})$.
\end{thm}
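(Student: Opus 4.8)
The plan is to run the proof of Theorem 3.17 almost verbatim, using Lemma 3.19 in place of Lemma 3.16 so that commutativity is no longer needed, and invoking Theorem 3.12 to identify $S^{wap}$ with the space of pure $z-$filters $\Gamma=\{\bigcap[\p]:\p\in S^{Lmc}\}$, equipped with the quotient map $\gamma:S^{Lmc}\rightarrow\Gamma$ given by $\gamma(\p)=\bigcap[\p]$. The inequality card$(S^{wap})\leq$card$(S^{Lmc})$ is immediate, since $wap(S)\subseteq Lmc(S)$ makes $\gamma$ a continuous surjection of $S^{Lmc}$ onto $\Gamma\approx S^{wap}$.

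For the reverse inequality the crucial step is to show that $\gamma$ is injective on $\overline{A}\cap S^{*}$. Let $\p\in\overline{A}\cap S^{*}$. By hypothesis $(ii)$, $\overline{A}\cap cl_{S^{Lmc}}(S^{*}*S^{Lmc})=\emptyset$, so $\p\in S^{*}-cl_{S^{Lmc}}(S^{*}*S^{Lmc})$ and Lemma 3.19 gives $[\p]=\{\p\}$; hence $\gamma(\p)=\p$ and $\gamma^{-1}(\gamma(\p))=\{\p\}$, so distinct points of $\overline{A}\cap S^{*}$ have distinct $\gamma$-images. Therefore card$(S^{wap})=$card$(\Gamma)\geq$card$(\overline{A}\cap S^{*})$, and it suffices to check card$(\overline{A}\cap S^{*})=$card$(S^{Lmc})$: since $A$ is unbounded the set $\overline{A}\cap S^{*}$ is nonempty, and from $\overline{A}=(\overline{A}\cap S)\cup(\overline{A}\cap S^{*})$ together with card$(\overline{A}\cap S)\leq$card$(S)<$card$(S^{Lmc})=$card$(\overline{A})$ (the only case in which there is anything to prove) we get card$(\overline{A}\cap S^{*})=$card$(\overline{A})=$card$(S^{Lmc})$. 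Combining the two inequalities yields card$(S^{wap})=$card$(S^{Lmc})$.

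The step I expect to be the main obstacle is precisely this last reduction to $\overline{A}\cap S^{*}$. Lemma 3.19 produces singleton $\equiv$-classes only for points of $S^{*}$, so one must rule out any loss of cardinality caused by the closure points of $A$ that lie in $S$; this is where the unboundedness of $A$ and hypothesis $(i)$ are genuinely needed. Once that is settled, the argument is a routine transcription of the proof of Theorem 3.17 with Lemma 3.19 substituted for Lemma 3.16.
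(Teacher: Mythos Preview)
Your argument follows the paper's strategy: invoke Lemma 3.19 to obtain singleton $\equiv$-classes on $\overline{A}$ and read off the cardinality bound. The paper, however, applies Lemma 3.19 to \emph{every} $\p\in\overline{A}$, not only to those in $\overline{A}\cap S^{*}$, concluding $[\p]=\{\p\}$ on all of $\overline{A}$ and hence $\mathrm{card}(S^{wap})\geq\mathrm{card}(\overline{A})=\mathrm{card}(S^{Lmc})$ in one line, with no split between $\varepsilon(S)$ and $S^{*}$. This is legitimate because the proof of Lemma 3.19 never actually uses the hypothesis $\p\in S^{*}$; it only needs $\p\notin cl_{S^{Lmc}}(S^{*}*S^{Lmc})$, and hypothesis (ii) supplies this for every point of $\overline{A}$.

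Your restriction to $\overline{A}\cap S^{*}$ is therefore unnecessary caution, and it forces the extra cardinality reduction whose parenthetical dismissal of the case $\mathrm{card}(S)\geq\mathrm{card}(S^{Lmc})$ is not justified: nothing you wrote rules out $\mathrm{card}(\overline{A}\cap\varepsilon(S))=\mathrm{card}(S^{Lmc})$ with $\mathrm{card}(\overline{A}\cap S^{*})<\mathrm{card}(S^{Lmc})$, so the phrase ``the only case in which there is anything to prove'' hides a small gap. The cleanest repair is to drop the restriction to $S^{*}$ and apply Lemma 3.19 across all of $\overline{A}$, exactly as the paper does.
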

\begin{proof}
It is obvious that
card$(S^{wap})\leq$card$(S^{Lmc})$. By Lemma 3.19, for each $\p\in \overline{A}$, $[\p]=\{\p\}$.
Therefore $card(S^{wap})\geq card(\overline{A})=card(S^{Lmc}).$
\end{proof}
\begin{thm}
Let $S$ be a semitopological semigroup. Let $S^{wap}$ be the one point compactification and $card(S^*)>1$. Then
$S^**S^{Lmc}$ is dense in $S^*$.
\end{thm}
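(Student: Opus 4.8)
The plan is to show that the ``one point compactification'' hypothesis forces the $\equiv$-class of every point of $S^{*}$ to be all of $S^{*}$, and then to invoke the contrapositive of Lemma 3.19.

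First, since $S^{wap}$ is a compact (Hausdorff) semitopological semigroup which is a semigroup compactification of $S$, the universal property of $(S^{Lmc},\varepsilon)$ supplies a continuous homomorphism $\eta\colon S^{Lmc}\to S^{wap}$ with $\eta\circ\varepsilon_{Lmc}=\varepsilon_{wap}$. For $f\in wap(S)$ with continuous extension $\overline f\colon S^{wap}\to\mathbb C$, the map $\overline f\circ\eta$ is a continuous extension of $f$ to $S^{Lmc}$, hence $\overline f\circ\eta=\widehat f$. Since $C(S^{wap})\cong wap(S)$ separates the points of the compact Hausdorff space $S^{wap}$, this yields $\widetilde p\equiv\widetilde q$ if and only if $\eta(\widetilde p)=\eta(\widetilde q)$; that is, $[\widetilde p]=\eta^{-1}(\eta(\widetilde p))$ for every $\widetilde p\in S^{Lmc}$.

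The crux is to identify $\eta(\widetilde p)$, for $\widetilde p\in S^{*}$, with the point at infinity $\infty$ of $S^{wap}=\varepsilon_{wap}(S)\cup\{\infty\}$. Given $\widetilde p\in S^{*}$, pick a net $(s_{\alpha})$ in $S$ with $\varepsilon_{Lmc}(s_{\alpha})\to\widetilde p$. If $(s_{\alpha})$ had a subnet lying in a compact $K\subseteq S$, that subnet would cluster at some $t\in K$, and passing to a further subnet and using continuity of $\varepsilon_{Lmc}$ would give $\widetilde p=\varepsilon_{Lmc}(t)\in\varepsilon_{Lmc}(S)$, contradicting $\widetilde p\in S^{*}$. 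Hence $(s_{\alpha})$ eventually leaves every compact subset of $S$, so $\varepsilon_{wap}(s_{\alpha})\to\infty$; continuity of $\eta$ and Hausdorffness of $S^{wap}$ then force $\eta(\widetilde p)=\infty$. Therefore $[\widetilde p]=\eta^{-1}(\{\infty\})\supseteq S^{*}$ for every $\widetilde p\in S^{*}$, and since $\mathrm{card}(S^{*})>1$ we conclude $[\widetilde p]\neq\{\widetilde p\}$.

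Finally, the contrapositive of Lemma 3.19 says that if $\widetilde p\in S^{*}$ and $[\widetilde p]\neq\{\widetilde p\}$, then $\widetilde p\in cl_{S^{Lmc}}(S^{*}*S^{Lmc})$. As $\widetilde p\in S^{*}$ was arbitrary, $S^{*}\subseteq cl_{S^{Lmc}}(S^{*}*S^{Lmc})$, which is the assertion. The only genuine difficulty is the crux, i.e.\ the equality $\eta(S^{*})=\{\infty\}$ — this is exactly where the hypothesis on $S^{wap}$ enters, and it may equivalently be seen through the description $wap(S)=C_{0}(S)\oplus\mathbb C\,1$, under which $\widehat g$ vanishes on $S^{*}$ for each $g\in C_{0}(S)$ and hence $\widehat f$ is constant on $S^{*}$ for each $f\in wap(S)$; everything else is formal once Lemma 3.19 is available.
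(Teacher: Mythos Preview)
Your proof is correct and follows essentially the same route as the paper: both use Lemma~3.19 (in contrapositive form) together with the observation that the one-point-compactification hypothesis forces $[\widetilde p]\supseteq S^{*}$, hence $[\widetilde p]\neq\{\widetilde p\}$ when $\mathrm{card}(S^{*})>1$. The paper simply asserts $[\widetilde p]=S^{*}$ without justification, whereas you supply an explicit argument via the quotient map $\eta\colon S^{Lmc}\to S^{wap}$ and the identification of $\eta(S^{*})$ with $\{\infty\}$; otherwise the two arguments coincide.
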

\begin{proof}
 Let $S^{wap}$ be the one point compactification so $[\p]=S^*$ for each $\p\in S^*$. Let
$\p\in S^*-cl_{S^{Lmc}}(S^**S^{Lmc})$ then for each $\q\in
S^{Lmc}-\{\p\}$ there exists $f\in wap(S)$ such that
Range$(f)=[0,1]$ and $\widehat{f}(\p)=1$ and $\widehat{f}(\q)=0$, by Lemma 3.19.
Consequently $[\p]=\{\p\}$. This is a contradiction. So
 $ S^*-cl_{S^{Lmc}}(S^**S^{Lmc})=\emptyset$.\end{proof}

\begin{exam}
 Let $G$ be the linear group $Sl(2,\mathbb{R})$. For this group
$G^{wap}$ = the one-point compactification,( see \cite{filaweak} and \cite{filali}), and so $card(G^{wap})=card(G)$
while $card(G^{Lmc}) =card(G^{\mathcal{LUC}}) = 2^{2^{\kappa (G)}}$ , where $G^{\mathcal{LUC}}$
is the maximal ideal space of the $C^*-$algebra of bounded left norm continuous function on $G$ and
$\kappa (G) =\omega$ (in this
case) is the compact covering number of G, i.e., the minimal number
of compact sets needed to cover $G$. Thus $[\p]=G^*$ for each $\p\in G^*=G^{\mathcal{LUC}}-G=G^{Lmc}-G$.
Hence $G^*G^{\mathcal{LUC}}=G^*G^{Lmc}$ is dense in $G^*$, by Theorem 3.21.
\end{exam}


\bibliographystyle{amsplain}

\end{document}